\let\@@pmod\pmod
\DeclareRobustCommand{\pmod}{\@ifstar\@pmods\@@pmod}
\def\@pmods#1{\mkern4mu({\operator@font mod}\mkern 6mu#1)}
\newcommand{\tpmod}[1]{\mkern 8mu({\operator@font mod}\mkern 6mu#1)}
\author{Leon Fairbanks}
\begin{document}
\begin{flushleft}
\title{{\textbf{Sums of Powers of Sine and Generalized Bernoulli Polynomials}}}
\maketitle
\begin{abstract}
We produce formulas for 
$$\sum_{j=1}^{2^{n-2}}\frac{1}{\sin^s\left(\frac{(2j-1)\pi}{2^n}\right)}$$\notag in terms of Generalized Bernoulli and Euler polynomials
and use one of the formulas to produce a nice integral representation of the Riemann zeta function.
\end{abstract}
\tableofcontents

\newtheorem{theorem}{Theorem}[section]
\newtheorem{lemma}[theorem]{Lemma}
\newtheorem{proposition}[theorem]{Proposition}
\newtheorem{corollary}[theorem]{Corollary}
\newtheorem{definition}[theorem]{Definition}

\section{Introduction}
We derive some formulas for sums of powers of the sine function and make explict the relationship to the Riemann zeta function. The following proposition lists some formulas of the type we consider here.

\begin{proposition}
Let
\begin{align}\notag
S(s,n)&=\sum_{j=1}^{2^{n-2}}\frac{1}{\sin^s\left(\frac{(2j-1)\pi}{2^n}\right)}\\\notag
\end{align}
Then
\begin{align}\notag
S(2,n)&=\frac{1}{2}2^{2n-2}\\\notag
S(3,n)&=\sum_{j=1}^{2^{n-2}}\left(-2j^2+2(2^{n-1}+1)j-2^{n-1}\right)\frac{1}{\sin[\frac{(2j-1)\pi}{2^n}]}\\\notag
S(4,n)&=\frac{1}{6}\left(2^{4n-4}+2(2^{2n-2})\right)\\\notag
S(5,n)&=\frac{1}{3}\sum_{j=1}^{2^{n-2}}\biggl(2 j^4-4(2^{n-1}+1)j^3+2(3( 2^{n-1})-1)j^2+2(2^{3n-3}+2^{n-1}+2)j\\\notag
&-(2^{3n-3}+2(2^{n-1}))\biggr)\frac{1}{\sin[\frac{(2j-1)\pi}{2^n}]}\\\notag
S(6,n)&=\frac{1}{30}(2(2^{6n-6})+5(2^{4n-4})+8(2^{2n-2}))\\\notag
S(7,n)&=\frac{1}{45}\sum_{j=1}^{2^{n-2}}\biggl(  -4 j^6+12(2^{n-1}+1)j^5-10(3(2^{n-1})-2)j^4-20(2^{3n-3}+2(2^{n-1})+3)j^3 \\\notag
&+2(15(2^{3n-3})+45(2^{n-1})-8)j^2 +4( 3(2^{5n-5})+5(2^{3n-3})+4(2^{n-1})+12 )j\\\notag
&-3( 2(2^{5n-5})+5(2^{3n-3})+8(2^{n-1}) )\biggr)\frac{1}{\sin[\frac{(2j-1)\pi}{2^n}]}\\\notag
S(8,n)&=\frac{1}{630}\left(17(2^{8n-8})+56(2^{6n-6})+98(2^{4n-4})+144(2^{2n-2})\right)\\\notag
\end{align}
\end{proposition}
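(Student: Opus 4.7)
My plan is to split the proposition into two parallel strands, one for even $s$ and one for odd $s$, since the stated formulas have visibly different shapes. For the even cases $s = 2, 4, 6, 8$ the right-hand side is a polynomial in $N := 2^{n-1}$, so the target is a single closed identity per $s$; for the odd cases $s = 3, 5, 7$ the right-hand side is still a sum of $\csc$ values, so what is really being claimed is a reduction of $S(s,n)$ to $S(1,n)$ with polynomial weights in $j$.

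For the even cases, I would start from the classical identity $\sum_{k=0}^{N-1} \csc^2\!\left(\frac{(2k+1)\pi}{2N}\right) = N^2$, which follows from the partial-fraction expansion $\pi \csc(\pi z) = \sum_{k \in \mathbb{Z}} \frac{(-1)^k}{z - k}$ after differentiating once and specializing $z$ to the half-odd-integer grid. Since $\csc(\pi - x) = \csc(x)$ pairs the terms $k$ and $N-1-k$, only half of the terms are independent, and with $N = 2^{n-1}$ this yields $S(2,n) = \tfrac{1}{2}\cdot 2^{2n-2}$. The formulas for $S(4,n)$, $S(6,n)$, $S(8,n)$ come from the same expansion differentiated $2m-1$ times: one obtains $\sum_{k=0}^{N-1} \csc^{2m}\!\left(\frac{(2k+1)\pi}{2N}\right) = Q_m(N^2)$ for an explicit polynomial $Q_m$ of degree $m$ with rational coefficients, and computing $Q_2$, $Q_3$, $Q_4$ by hand reproduces the listed values after halving.

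For the odd cases, I would apply discrete summation by parts, using the fact that $\csc^2(\theta) = -\frac{d}{d\theta}\cot(\theta)$ has a finite-difference analogue adapted to the equally spaced grid $\theta_{j+1} - \theta_j = \pi/N$: a factor of $\csc^2(\theta_j)$ can be rewritten as a telescoping difference of $\cot$ values, absorbing one power of $\csc$ into a polynomial multiplier on the neighbouring $\csc$. Iterating this reduction $m$ times takes $\csc^{2m+1}$ down to $\csc$ with a polynomial of degree $2m$ in $j$ in front, which matches the quadratic, quartic, and sextic weights seen in $S(3,n)$, $S(5,n)$, $S(7,n)$.

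The chief obstacle is book-keeping the polynomial coefficients, especially the cross terms such as the $2^{3n-3}$ in $S(5,n)$ and the $2^{5n-5}$ and $2^{3n-3}$ in $S(7,n)$: these arise from the boundary contributions of the summation by parts at $j = 1$ and $j = 2^{n-2}$, which interact with the symmetry $\csc(\pi - \theta) = \csc(\theta)$ to inject constant-in-$j$ tails that the interior telescoping alone does not predict. A cleaner and more uniform route would be to defer the explicit weights to the general identity announced in the abstract, expressing each one as a value of a N\"orlund-type generalized Bernoulli or Euler polynomial at a rational argument and then specializing to $s \leq 8$.
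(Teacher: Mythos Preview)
The paper does not prove this proposition at all: its proof reads in full ``See ref.\ [1],[2].'' Proposition~1.1 is presented as a list of motivating examples; the actual derivations are the content of the author's earlier arXiv notes, and within this paper the formulas reappear as the specializations $m=1,2,3$ of Theorems~\ref{last} and~\ref{theorem1}. So your closing remark --- that the clean route is to defer the explicit weights to the general identity in terms of N\"orlund--Bernoulli and Euler polynomials and then specialize to $s\le 8$ --- is exactly the paper's stance.

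That said, the independent argument you sketch has real gaps. For the even cases, differentiating the Mittag--Leffler expansion of $\pi\csc(\pi z)$ once produces $-\pi^2\csc(\pi z)\cot(\pi z)$, and further derivatives of $\csc$ generate only \emph{odd} powers of $\csc$; even powers come from derivatives of $\cot$, whose partial-fraction expansion is over the integers, not the half-odd grid. The identity $\sum_{k=0}^{N-1}\csc^2\!\bigl(\tfrac{(2k+1)\pi}{2N}\bigr)=N^2$ is true, but it does not fall out of the step you describe; one typically obtains it from the factorization of $\cos(N\theta)$ (Chebyshev roots) or from a residue computation, and the higher $Q_m$ likewise require converting between derivatives of $\cot$ and polynomials in $\csc^2$, which is precisely where the $B^{(2k)}_{2(k-j)}(k)$ enter. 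For the odd cases, the relation $\csc^2\theta=-\tfrac{d}{d\theta}\cot\theta$ is continuous; on the discrete grid with step $\pi/N$ there is no identity that turns $\csc^2(\theta_j)$ into an exact telescoping difference of $\cot(\theta_j)$, so the ``summation by parts'' step as written does not go through. The paper's own mechanism for reducing $\csc^{2m+1}$ to $\csc$ (Section~3) is instead the exact trigonometric recursion of Lemma~\ref{sinequotient} and Corollary~\ref{induct}, which passes from $M_n(1,j)$ at power $2m+1$ to power $2m-1$ by an honest second-difference identity in $j$; that, not a $\cot$-telescope, is what produces the quadratic, quartic, and sextic weights you are aiming for.
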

\begin{proof}
See ref. [1],[2].
\end{proof}
A connection with the Riemann zeta function:

\begin{align}
\zeta(m)&=\lim_{n\to\infty}\left(\frac{2^m\pi^m}{2^{m}-1}\right) \sum _{i=1}^{2^{n-2}}\frac{1}{ \left(2^n\sin\left(\frac{\left(2 i-1\right)\pi}{2^n}\right)\right)^m} \\\notag
&=\lim_{n\to\infty}\left(\frac{\pi^m}{2^{m}-1}\right) \left(\frac{1}{2^{mn-m}}\right)\sum _{i=1}^{2^{n-2}}\frac{1}{ \left(\sin\left(\frac{\left(2 i-1\right)\pi}{2^n}\right)\right)^m} \\\notag
&=\lim_{n\to\infty}\left(\frac{\pi^m}{2^{m}-1}\right)\left(\frac{1}{2^{n-1}}\right) \left(\frac{1}{2^{(m-1)(n-1)}}\right)S(m,n) \\\notag
\end{align}

The two main results are:\\
Theorem \ref{last}
Let $E_j(x)$ be the j-th Euler polynomial and $j\in\mathbb{N}$ then
\begin{align}
S(2m+1,n)&=(-1)^m\frac{2^{2mn}}{(2m)!}\sum_{j=1}^{2^{n-2}}\sum_{k=0}^{m-1}2^{(n-1)(-2k-1)}\binom{2m-1}{2k}B_{2k}^{(2m)}(m)\\\notag
&\Biggl(2^{n-1}E_{2m-2k}\left(\frac{j}{2^{n-1}}\right)-mE_{2m-2k-1}\left(\frac{j}{2^{n-1}}\right)\Biggr)\frac{1}{\sin[\frac{(2j-1)\pi}{2^n}]}\\\notag
\end{align}
Theorem \ref {theorem1}
 Let $B(j)$ be the $j$-th Bernoulli number and let $B_n^{(\alpha)}(x)$ be the generalized Bernoulli polynomial of degree $n$ in $x$.
\begin{align}\notag
S(2 k,n)&=\sum_{j=1}^k(-1)^{2j+k+1}\frac{2^{2j(n-1)+2k-1}(2^{2j}-1)}{(2(k-j))!(2j)!}B_{2(k-j)}^{(2k)}(k) B(2 j)\\\notag
&=\sum_{j=1}^k(-1)^{j+k}\frac{2^{2j(n-2)+2k}(2^{2j}-1)}{\pi^{2j}(2(k-j))!}B_{2(k-j)}^{(2k)}(k) \zeta(2 j)\\\label{sumsine}
\end{align}

The section on generalized Bernoulli polynomials sets up the machinery for the proof of the theorems.
The integral for the Riemann zeta function is
\begin{align}
\zeta(2j+1)&=(-1)^j\frac{2^{2j}\pi^{2j+1}}{(2j)!(2^{2j+1}-1)}\int_{0}^{1/2}E_{2j}(x)\csc (\pi  x)dx\\\notag
\end{align}
\section{Generalized Bernoulli Polynomials}
The generalized Bernoulli polynomials are defined via the generating function
$$\left(\frac{t}{e^t-1}\right)^\alpha e^{xt}=\sum_{n=0}^\infty B_n^{(\alpha)}(x)\frac{t^n}{n!}$$
The Euler polynomials  are defined
$$\left(\frac{2}{e^t+1}\right) e^{xt}=\sum_{n=0}^\infty E_n(x)\frac{t^n}{n!}$$
We list some known equivalences for future reference.
\begin{proposition}
 Let $E_j(x)$ be the j-th Euler polynomial, $E(k)$ the $k$-th Euler numberr, $B_n^{(\alpha)}(x)$ the generalized Bernoulli polynomial of degree $n$ in $x$, $B(j)$ the $j$-th Bernoulli number.
\begin{align}
B(m)&=\sum_{k=0}^m\frac{1}{k+1}\sum_{j=0}^k\binom{k}{j}(-1)^jj^m\\
2^{2n}B_{2n}^{(2x)}(x)&=\lim_{t\to 0}\frac{  \mathrm{  d^{2n}  }   }{\mathrm{d}t^{2n}  }        \Bigl(\frac{t}{\sinh(t)}\Bigr)^{2x}\\
(-1)^{j+1}2^{2j}(2^{2j}-1)B(2j)&=\frac{  \mathrm{  d^{2j}  }   }{\mathrm{d}t^{2j}  }        \Bigl(\tan(t))\Bigr)\vert_{t=0}\\
(-1)^{j+1}\frac{2^{2j}}{(2j)!}B(2j)&=\frac{2}{\pi^{2j}}\zeta(2j)\\
E(k)&=2^kE_k(\frac {1} {2}) \\
E(2k+1)&=0\\
E_0(x)&=1\\
B_{0}^{(\alpha)}(x)&=1\\
E_{2k}(0)&=0, \rm{\  for\  }k>0\\
E_n(0)&=\frac{2}{n+1}\left(1-2^{n+1}\right)B_{n+1}(0)\\
E_j(x)&=\sum_{k=0}^j\frac{1}{2^k}\sum_{l=0}^j(-1)^l \binom{k}{l}(x+l)^j\\
\end{align}
\end{proposition}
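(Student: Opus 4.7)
The plan is to verify each of the eleven listed identities as an immediate consequence of the two generating functions given just above the proposition, together with basic parity and rescaling arguments; no deep machinery is needed, so I would organize the write-up as a single short paragraph per identity (or group them when a common trick works).

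For identity (1), I would expand $t/(e^t-1)$ by writing $e^t-1=\sum_{k\ge 1}t^k/k!$, factoring out $t$, and using $(1-u)^{-1}=\sum_k u^k$ to get a double sum; collecting the coefficient of $t^m/m!$ and recognising the inner expression $\sum_{j=0}^k\binom{k}{j}(-1)^j j^m$ as (a signed multiple of) a Stirling number of the second kind recovers the familiar explicit Bernoulli formula. For identity (2), I would use the algebraic simplification
\[
\left(\frac{t}{e^t-1}\right)^{2x}e^{xt}=\left(\frac{te^{t/2}}{e^t-1}\right)^{2x}=\left(\frac{t/2}{\sinh(t/2)}\right)^{2x},
\]
so the generating function for $B_n^{(2x)}(x)$ is $(t/2)^{2x}/\sinh^{2x}(t/2)$; replacing $t$ by $2t$ and reading off the $2n$th Taylor coefficient at $0$ produces the factor $2^{2n}/(2n)!$, which combined with $(2n)!$ from $d^{2n}/dt^{2n}$ yields precisely $2^{2n}B_{2n}^{(2x)}(x)$.

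Identities (3) and (4) are the classical Taylor expansion of $\tan$ and Euler's evaluation $\zeta(2j)=(-1)^{j+1}(2\pi)^{2j}B(2j)/(2(2j)!)$, which I would simply cite. Identities (5)--(9) all drop out of the Euler generating function $2e^{xt}/(e^t+1)=\sum E_n(x)t^n/n!$: substituting $x=1/2$ gives $\operatorname{sech}(t/2)$, whose Taylor series matches $E(k)$ after the rescaling $t\mapsto t/2$, giving (5); the fact that $\operatorname{sech}$ is even gives (6); setting $n=0$ in each of the two generating functions gives (7) and (8); and substituting $x=0$ gives $2/(e^t+1)-1=-\tanh(t/2)$, an odd function, whence the even-index coefficients vanish for $n\ge 1$, giving (9). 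Identity (10) is the standard bridge between Euler and Bernoulli polynomials, obtained by expanding $2/(e^t+1)=2/t\cdot t/(e^t+1)$ and using $t/(e^t+1)=t/(e^t-1)-2t/(e^{2t}-1)$ to relate the Taylor coefficients. Finally, identity (11) follows by writing $2/(e^t+1)=\sum_k(-1)^k((e^t-1)/2)^k\cdot 2$... more cleanly, by expanding $(2/(e^t+1))e^{xt}=\sum_{k\ge 0}2^{-k}(-(e^t-1))^k e^{xt}$ using the geometric series (valid formally), expanding $(e^t-1)^k$ by the binomial theorem, and identifying the coefficient of $t^j/j!$.

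The only real bookkeeping risk is keeping track of the several different sign conventions for Bernoulli and Euler numbers that appear in the literature; I would fix the conventions by the generating functions stated at the top of the section and verify each identity against them, since a mismatch there would propagate into the later theorems. No step is genuinely hard, so the main obstacle is just presenting the eleven derivations compactly without obscuring the uniform generating-function logic.
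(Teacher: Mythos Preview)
Your proposal is correct and in fact considerably more detailed than the paper's own treatment: the paper leaves the proof environment empty, introducing the proposition merely as a list of ``known equivalences for future reference'' with no derivation at all. Your plan to derive each identity directly from the two generating functions stated at the top of the section is exactly the natural route, and the specific manipulations you outline (the rewriting $\bigl(t/(e^t-1)\bigr)^{2x}e^{xt}=\bigl((t/2)/\sinh(t/2)\bigr)^{2x}$ for (2), the parity of $\operatorname{sech}$ and $\tanh$ for (5)--(9), the partial-fractions identity $t/(e^t+1)=t/(e^t-1)-2t/(e^{2t}-1)$ for (10), and the formal geometric expansion of $1/(1+(e^t-1)/2)$ for (11)) all check out. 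The one place worth tightening in a final write-up is (11): make explicit that the apparently infinite sum over $k$ truncates at $k=j$ because the inner sum is a $k$th finite difference of a degree-$j$ polynomial and hence vanishes for $k>j$; this justifies matching the paper's finite upper limit.
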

\begin{proof}
\end{proof}
\begin{proposition}\label{xplusy}
\begin{align}
B_k(x+y)&=\sum_{i=0}^k\binom{k}{i}B_i(x)y^{k-i}\\\notag
E_k(x+y)&=\sum_{i=0}^k\binom{k}{i}E_i(x)y^{k-i}\\\notag
\end{align}
\end{proposition}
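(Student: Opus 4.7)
Both identities are standard translation formulas, and the cleanest route is via the generating functions defined at the top of Section~2. The plan is to expand the generating function of $B_k(x+y)$ (resp.\ $E_k(x+y)$) as a product of two known series and match coefficients of $t^k/k!$.

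\paragraph{Step 1: Factor the generating function.} For the Bernoulli case, I would start from
\[
\sum_{k=0}^\infty B_k(x+y)\frac{t^k}{k!} \;=\; \frac{t}{e^t-1}\,e^{(x+y)t} \;=\; \left(\frac{t}{e^t-1}\,e^{xt}\right)\cdot e^{yt},
\]
and for the Euler case,
\[
\sum_{k=0}^\infty E_k(x+y)\frac{t^k}{k!} \;=\; \frac{2}{e^t+1}\,e^{(x+y)t} \;=\; \left(\frac{2}{e^t+1}\,e^{xt}\right)\cdot e^{yt}.
\]
The first factor on the right in each case is, by the definitions given in Section~2, $\sum_{i\ge 0} B_i(x)\,t^i/i!$ (respectively $\sum_{i\ge 0} E_i(x)\,t^i/i!$), and the second factor is $\sum_{j\ge 0} y^j t^j/j!$.

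\paragraph{Step 2: Cauchy product and coefficient extraction.} Multiplying the two power series via the Cauchy product, the coefficient of $t^k$ in the product is
\[
\sum_{i+j=k} \frac{B_i(x)}{i!}\cdot\frac{y^j}{j!} \;=\; \frac{1}{k!}\sum_{i=0}^k \binom{k}{i} B_i(x)\, y^{k-i},
\]
and likewise with $E_i$ in place of $B_i$. Comparing this with the coefficient of $t^k/k!$ on the left-hand sides of the two expansions from Step~1 yields the two claimed formulas simultaneously.

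\paragraph{Difficulty.} There is essentially no obstacle: once the generating function is factored as a product with $e^{yt}$, the Cauchy product mechanically delivers the binomial convolution. The only point requiring minor care is that both factors are analytic at $t=0$ (with the singularities of $t/(e^t-1)$ and $2/(e^t+1)$ being removable or away from the origin), so the formal power series manipulation is justified as an identity of absolutely convergent series in a neighborhood of $0$, and the coefficient comparison is valid.
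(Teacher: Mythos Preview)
Your proof is correct and follows essentially the same approach as the paper: factor the generating function as the product of the generating function at $x$ with $e^{yt}$, then take the Cauchy product and compare coefficients of $t^k/k!$. The only cosmetic difference is that the paper runs the computation for the generalized $B_n^{(\alpha)}(x+y)$ rather than the ordinary $B_k(x+y)$, but the argument is identical once $t/(e^t-1)$ is replaced by $\bigl(t/(e^t-1)\bigr)^\alpha$.
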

\begin{proof}
From the above,  $B_{n}^{(\alpha)}(x+y)$ corresponds to generating function 
\begin{align}
\left(\frac{t}{e^t-1}\right)^\alpha e^{(x+y)t}&=\sum_{n=0}^\infty B_n^{(\alpha)}(x+y)\frac{t^n}{n!}\\\notag
e^{yt}\left(\frac{t}{e^t-1}\right)^\alpha e^{xt}&=\sum_{n=0}^\infty\sum_{i=0}^n B_i^{(\alpha)}(x)y^{n-i}\frac{t^n}{i!(n-i)!}\\\notag
&=\sum_{n=0}^\infty\sum_{i=0}^n \binom{n}{i}B_i^{(\alpha)}(x)y^{n-i}\frac{t^n}{n!}\\\notag
\end{align}
The same argument applies to Euler polynomial generating function.
\end{proof}
\begin{proposition}\label{addAlpha}
For $i>0$, $m>0$
\begin{align}
B_{n}^{(\alpha)}(\alpha -x)&=(-1)^n B_n^{(\alpha)}(x)\\\notag
\end{align}
\end{proposition}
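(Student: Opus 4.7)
The natural plan is to verify the symmetry at the level of generating functions and read off the coefficients. By the definition of the generalized Bernoulli polynomials, the generating function for the sequence $\{B_n^{(\alpha)}(\alpha-x)\}$ is simply
\begin{align}\notag
\left(\frac{t}{e^t-1}\right)^\alpha e^{(\alpha-x)t}=\sum_{n=0}^\infty B_n^{(\alpha)}(\alpha-x)\frac{t^n}{n!},
\end{align}
obtained by substituting $\alpha-x$ for $x$ in the defining identity. The goal is to rewrite the left-hand side so that it visibly equals $\sum_{n=0}^\infty (-1)^n B_n^{(\alpha)}(x)\,t^n/n!$.

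The key algebraic step I would use is the identity
\begin{align}\notag
\left(\frac{t}{e^t-1}\right)^\alpha e^{\alpha t}=\left(\frac{t\,e^t}{e^t-1}\right)^\alpha=\left(\frac{-t}{e^{-t}-1}\right)^\alpha,
\end{align}
since $t e^t/(e^t-1)=t/(1-e^{-t})=-t/(e^{-t}-1)$. Multiplying both sides by $e^{-xt}$ converts the left-hand side of the target generating function into $\bigl(\tfrac{-t}{e^{-t}-1}\bigr)^\alpha e^{-xt}$.

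To finish, I would recognize the rewritten expression as the defining generating function of $B_n^{(\alpha)}(x)$ evaluated at argument $-t$ in place of $t$: setting $s=-t$ in the original definition yields
\begin{align}\notag
\left(\frac{-t}{e^{-t}-1}\right)^\alpha e^{-xt}=\sum_{n=0}^\infty B_n^{(\alpha)}(x)\frac{(-t)^n}{n!}=\sum_{n=0}^\infty (-1)^n B_n^{(\alpha)}(x)\frac{t^n}{n!}.
\end{align}
Comparing coefficients of $t^n/n!$ in the two expansions of $\bigl(\tfrac{t}{e^t-1}\bigr)^\alpha e^{(\alpha-x)t}$ then gives the claimed identity.

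There is essentially no serious obstacle here; the only thing to be careful about is the sign juggling in the rearrangement $t/(e^t-1)\cdot e^t = -t/(e^{-t}-1)$, which is what introduces the factor $(-1)^n$ upon the substitution $t\mapsto -t$. The argument works for any $\alpha$ and for all $n\geq 0$, so the hypothesis ``for $i>0$, $m>0$'' stated in the proposition appears to be vestigial and is not needed in the proof.
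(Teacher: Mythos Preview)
Your proposal is correct and follows essentially the same route as the paper: write down the generating function for $B_n^{(\alpha)}(\alpha-x)$, use the algebraic identity $\left(\tfrac{t}{e^t-1}\right)^\alpha e^{\alpha t}=\left(\tfrac{te^t}{e^t-1}\right)^\alpha=\left(\tfrac{-t}{e^{-t}-1}\right)^\alpha$, recognize the result as the defining series evaluated at $-t$, and compare coefficients. Your remark that the hypotheses ``$i>0$, $m>0$'' are vestigial is also accurate.
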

\begin{proof}
From the above,  $B_{n}^{(\alpha)}(\alpha -x)$ corresponds to generating function 
\begin{align}
\left(\frac{t}{e^t-1}\right)^\alpha e^{(\alpha-x)t}&=\sum_{n=0}^\infty B_n^{(\alpha)}(\alpha-x)\frac{t^n}{n!}\\\notag
&=\left(\frac{t e^t}{e^t-1}\right)^\alpha e^{-xt}\\\notag
&=\left(\frac{-t}{e^{-t}-1}\right)^\alpha e^{x(-t)}\\\notag
&=\sum_{n=0}^\infty (-1)^n B_n^{(\alpha)}(x)\frac{t^n}{n!}\\\notag
\end{align}

\end{proof}
\begin{proposition}\label{alphaMinus}
\begin{align}
B_n^{(\alpha-1)}(x)&=\frac{1}{n+1}\left(B_{n+1}^{(\alpha)}(x+1)-B_{n+1}^{(\alpha)}(x)\right)\\\notag
&=\frac{1}{n+1}\sum_{k=0}^n\binom{n+1}{k}B_k^{(\alpha)}(x)\\\notag
\end{align}
\end{proposition}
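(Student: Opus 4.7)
The plan is to attack the two equalities separately, starting from the generating function definition and then invoking Proposition \ref{xplusy}.

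For the first equality, I would multiply and divide by $(e^t-1)/t$ to convert between exponents $\alpha$ and $\alpha-1$:
\begin{align*}
\sum_{n=0}^\infty B_n^{(\alpha-1)}(x)\frac{t^n}{n!}
&=\left(\frac{t}{e^t-1}\right)^{\alpha-1}e^{xt}
=\frac{e^t-1}{t}\left(\frac{t}{e^t-1}\right)^{\alpha}e^{xt}\\
&=\frac{1}{t}\left[\left(\frac{t}{e^t-1}\right)^{\alpha}e^{(x+1)t}-\left(\frac{t}{e^t-1}\right)^{\alpha}e^{xt}\right].
\end{align*}
Expanding each bracketed factor via the defining generating function for $B_n^{(\alpha)}$ and absorbing the $1/t$ through an index shift $n\mapsto n+1$ produces
$$\sum_{n=0}^\infty\frac{B_{n+1}^{(\alpha)}(x+1)-B_{n+1}^{(\alpha)}(x)}{n+1}\,\frac{t^n}{n!},$$
and matching coefficients of $t^n/n!$ gives the first identity. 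The only care needed is to verify that the constant term coming from $n=-1$ after the shift vanishes, which it does because $B_0^{(\alpha)}(x+1)-B_0^{(\alpha)}(x)=1-1=0$.

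For the second equality, I would apply Proposition \ref{xplusy} to the generalized Bernoulli polynomial (it extends verbatim from the $\alpha=1$ case, since the proof there used only the splitting $e^{(x+y)t}=e^{xt}e^{yt}$, which is independent of the $\alpha$-factor). With $y=1$, this gives
$$B_{n+1}^{(\alpha)}(x+1)=\sum_{i=0}^{n+1}\binom{n+1}{i}B_i^{(\alpha)}(x),$$
so subtracting the $i=n+1$ term $B_{n+1}^{(\alpha)}(x)$ from both sides and dividing by $n+1$ converts the first form into the claimed sum.

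I do not anticipate any real obstacle: the argument is essentially an index-shift bookkeeping exercise on the generating function, plus a direct invocation of Proposition \ref{xplusy}. The subtlest point is making sure Proposition \ref{xplusy} is legitimately applicable to $B_n^{(\alpha)}$ rather than only $B_n=B_n^{(1)}$; as noted above this is immediate from inspecting the proof, but should be remarked explicitly.
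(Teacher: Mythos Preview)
Your proposal is correct and follows essentially the same route as the paper: both arguments hinge on the single generating-function identity $(e^t-1)\left(\frac{t}{e^t-1}\right)^{\alpha}e^{xt}=t\left(\frac{t}{e^t-1}\right)^{\alpha-1}e^{xt}$ and then read off the coefficient of $t^{n+1}$. The only cosmetic difference is that the paper handles both equalities in one pass---extracting the sum form directly from the intermediate expression $e^t\bigl(\tfrac{t}{e^t-1}\bigr)^\alpha e^{xt}-\bigl(\tfrac{t}{e^t-1}\bigr)^\alpha e^{xt}$---whereas you split off the second equality and route it through Proposition~\ref{xplusy}; your remark that Proposition~\ref{xplusy} applies to $B_n^{(\alpha)}$ is well taken, since the paper's proof of that proposition is indeed already written for general $\alpha$.
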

\begin{proof}
Consider the generating function
\begin{align}
&\left(\frac{t}{e^t-1}\right)^\alpha e^{(x+1)t}-\left(\frac{t}{e^t-1}\right)^\alpha e^{xt}\\\notag
&=e^t\left(\frac{t}{e^t-1}\right)^\alpha e^{xt}-\left(\frac{t}{e^t-1}\right)^\alpha e^{xt}\\\notag
&=(e^t-1)\left(\frac{t}{e^t-1}\right)^\alpha e^{xt}\\\notag
&=t\frac{e^t-1}{t}\left(\frac{t}{e^t-1}\right)^\alpha e^{xt}\\\notag
&=t\left(\frac{t}{e^t-1}\right)^{\alpha-1} e^{xt}\\\notag
\end{align}
If we look at the coefficient of  $t^{n+1}$ in the first line, we get $$B_{n+1}^{(\alpha)}(x+1)-B_{n+1}^{(\alpha)}(x)$$
If we look at the coefficient of  $t^{n+1}$ in the second line, we get $$\sum_{k=0}^{n+1}\binom{n+1}{k}B_k^{(\alpha)}(x)-B_{n+1}^{(\alpha)}(x)$$
If we look at the coefficient of $t^{n+1}$ in the last line, we get $$(n+1)B_n^{(\alpha-1)}(x)$$
\end{proof}
\begin{proposition} \label{oddZero}
  For $i>0$, $m>0$ ($i\in\mathbb{Z}$, $m\in\mathbb{R}$).
\begin{align}
B_{2i+1}^{(m)}(\frac{m}{2})&=0\\\notag
\end{align}
\end{proposition}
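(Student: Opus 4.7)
The plan is to apply the symmetry relation from Proposition \ref{addAlpha}, specialized to the fixed point of the reflection $x \mapsto \alpha - x$. Recall that proposition gives
$$B_n^{(\alpha)}(\alpha - x) = (-1)^n B_n^{(\alpha)}(x)$$
for every nonnegative integer $n$ and every valid $\alpha$. The map $x \mapsto m - x$ has the unique fixed point $x = m/2$, and this is the input that makes the proposition collapse in a useful way.

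First I would set $\alpha = m$ and $x = m/2$ in Proposition \ref{addAlpha}. Since $\alpha - x = m - m/2 = m/2$, the left- and right-hand sides are evaluated at the same point, giving the identity
$$B_n^{(m)}\!\left(\tfrac{m}{2}\right) = (-1)^n B_n^{(m)}\!\left(\tfrac{m}{2}\right).$$
Next I would specialize to $n = 2i+1$ with $i \in \mathbb{Z}_{>0}$, so that $(-1)^n = -1$, yielding
$$B_{2i+1}^{(m)}\!\left(\tfrac{m}{2}\right) = -B_{2i+1}^{(m)}\!\left(\tfrac{m}{2}\right),$$
from which the conclusion $B_{2i+1}^{(m)}(m/2) = 0$ follows immediately by adding the right-hand side to both sides and dividing by $2$.

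There is essentially no obstacle: the argument is a one-line consequence of the reflection formula once $x$ is chosen as the fixed point. The only thing worth noting for the writeup is that Proposition \ref{addAlpha} was proved via the generating-function manipulation
$$\left(\tfrac{t}{e^t-1}\right)^{\alpha} e^{(\alpha - x)t} = \left(\tfrac{-t}{e^{-t}-1}\right)^{\alpha} e^{x(-t)},$$
which is valid for real (indeed complex) $\alpha$, so the hypothesis $m \in \mathbb{R}$ in the present proposition is legitimately covered, and no additional analytic justification is needed beyond what has already been established.
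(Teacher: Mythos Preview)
Your proof is correct and is essentially the same as the paper's. The paper argues directly that the generating function $\left(\tfrac{t}{e^t-1}\right)^m e^{(m/2)t}$ is even in $t$, which is exactly the $x=m/2$ specialization of the generating-function identity underlying Proposition~\ref{addAlpha}; you simply cite that proposition instead of redoing the computation.
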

\begin{proof}
The generating function is even:
\begin{align}
\left(\frac{-t}{e^{-t}-1}\right)^{m} e^{-\frac{m}{2}t}&=\left(\frac{te^t}{e^{t}-1}\right)^{m} e^{-\frac{m}{2}t}\\\notag
&=\left(\frac{t}{e^{t}-1}\right)^{m} e^{mt-\frac{m}{2}t}\\\notag
\end{align}
\end{proof}

\begin{lemma}\label{bidentity}
\begin{align}
\binom{n+k}{n-r}\binom{n+r+1}{n+k+1}&=\binom{n+k}{k}\binom{n}{r}\binom{n+r+1}{r-k}\frac{r!k!}{(r+k)!}\\\notag
\end{align}
\end{lemma}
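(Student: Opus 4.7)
The plan is to verify this identity by the most direct route possible: expand every binomial coefficient on each side using the factorial definition $\binom{a}{b}=\frac{a!}{b!(a-b)!}$ and check that the two resulting rational expressions in factorials coincide term by term. No combinatorial interpretation or generating-function argument is needed, since the claim reduces to a single factorial identity.

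Concretely, I would first expand the left-hand side to obtain
\begin{align*}
\binom{n+k}{n-r}\binom{n+r+1}{n+k+1}
=\frac{(n+k)!}{(n-r)!(k+r)!}\cdot\frac{(n+r+1)!}{(n+k+1)!(r-k)!},
\end{align*}
noting that the binomial $\binom{n+k}{n-r}$ requires $(n+k)-(n-r)=k+r$ in the denominator, and $\binom{n+r+1}{n+k+1}$ requires $(n+r+1)-(n+k+1)=r-k$.

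Next I would expand the right-hand side in the same way. The factor $\binom{n+k}{k}\binom{n}{r}$ contributes $\frac{(n+k)!}{k!\,n!}\cdot\frac{n!}{r!(n-r)!}$, where the $n!$ cancels; the factor $\binom{n+r+1}{r-k}$ contributes $\frac{(n+r+1)!}{(r-k)!(n+k+1)!}$; and the remaining factor is $\frac{r!\,k!}{(r+k)!}$. Collecting everything gives
\begin{align*}
\binom{n+k}{k}\binom{n}{r}\binom{n+r+1}{r-k}\frac{r!\,k!}{(r+k)!}
=\frac{(n+k)!\,(n+r+1)!}{(n-r)!\,(r-k)!\,(n+k+1)!\,(r+k)!},
\end{align*}
after the $k!$, $r!$, and $n!$ cancellations. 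This matches the left-hand expression exactly.

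The only real obstacle is bookkeeping: keeping track of the implicit constraints $n\ge r$, $r\ge k$, $k\ge 0$ so that every factorial argument is nonnegative and the binomial coefficients are meaningful, and not making an index error in the differences $(n+k)-(n-r)$ and $(n+r+1)-(n+k+1)$. Outside of that, the proof is a one-line factorial comparison.
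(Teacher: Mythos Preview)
Your proof is correct: the factorial expansion on each side gives exactly
\[
\frac{(n+k)!\,(n+r+1)!}{(n-r)!\,(r+k)!\,(n+k+1)!\,(r-k)!},
\]
so the identity holds. The paper's argument is essentially the same verification, only presented slightly differently: it first cancels the common factor $\binom{n+r+1}{n+k+1}=\binom{n+r+1}{r-k}$ from both sides, rewrites $\frac{r!k!}{(r+k)!}=\binom{r+k}{r}^{-1}$, and then recognizes the remaining equality $\binom{n+k}{r+k}\binom{r+k}{r}=\binom{n+k}{n}\binom{n}{r}$ as an instance of the subset-of-subset identity $\binom{a}{b}\binom{b}{c}=\binom{a}{c}\binom{a-c}{b-c}$. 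Both routes amount to the same factorial bookkeeping.
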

\begin{proof}
Cancelling $\binom{n+r+1}{n+k+1}$ from both sides, this is equivalent to
\begin{align}
\binom{n+k}{r+k}\binom{r+k}{r}&=\binom{n+k}{n}\binom{n}{r}\\\notag
\end{align}
Using the fact that
\begin{align}
\binom{n}{k}\binom{k}{j}&=\binom{n}{j}\binom{n-j}{k-j}\\\notag
\end{align}
on both sides, we see equality.
\end{proof}
\begin{lemma}\label{sS}
Let $s(i,j)$ and $S(i,j)$ be the Stirling numbers of the first and second kind, respectively.
\begin{align}
s(n,k)&=(-1)^{n-k}\sum_{j=n}^{2n-k}(-1)^{j-k}\binom{j-1}{k-1}\binom{2n-k}{j}S(j-k,j-n)\\\notag
\end{align}
\end{lemma}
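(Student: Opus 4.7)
The identity is a version of Schläfli's classical formula expressing Stirling numbers of the first kind in terms of those of the second kind. My plan is to extract it from generating functions via Lagrange--Bürmann inversion. I would start from the standard series
$$\frac{(\log(1+z))^k}{k!}=\sum_{n\geq k}s(n,k)\frac{z^n}{n!},$$
which follows from $(1+z)^x=e^{x\log(1+z)}$ by reading off $[x^k]$ on both sides. Applying Lagrange--Bürmann inversion to the compositional pair $z=e^w-1$, $w=\log(1+z)$ with $H(w)=w^k$ then converts the question into a coefficient-extraction problem:
$$s(n,k)=\frac{(n-1)!}{(k-1)!}\,[w^{n-k}]\left(\frac{w}{e^w-1}\right)^{n}.$$
The remaining work is to evaluate this coefficient and match the right-hand side of the lemma.

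Next, setting $\phi(w)=(e^w-1)/w$, so that $\phi-1$ has valuation $1$ in $w$, I would use the generalized binomial series
$$\phi(w)^{-n}=\sum_{r\geq 0}(-1)^r\binom{n+r-1}{r}(\phi(w)-1)^r,$$
which truncates to $r\leq n-k$ when restricted to $[w^{n-k}]$ since $(\phi-1)^r$ has valuation $r$. Combined with the second binomial expansion $(\phi-1)^r=\sum_{l=0}^r(-1)^{r-l}\binom{r}{l}\phi^l$ and the standard identity
$$\phi(w)^l=\frac{(e^w-1)^l}{w^l}=l!\sum_{i\geq 0}\frac{S(l+i,l)}{(l+i)!}\,w^i,$$
extracting the coefficient $[w^{n-k}]$ and swapping the order of summation would give
$$s(n,k)=\frac{(n-1)!}{(k-1)!}\sum_{l=0}^{n-k}(-1)^l\frac{l!\,S(n-k+l,l)}{(n-k+l)!}\sum_{r=l}^{n-k}\binom{n+r-1}{r}\binom{r}{l}.$$

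The core combinatorial step is then to collapse the inner sum. Using the identity $\binom{n+r-1}{r}\binom{r}{l}=\binom{n+l-1}{l}\binom{n+r-1}{r-l}$ (an easy consequence of the subset-of-a-subset rule, after passing through $(-1)^r\binom{-n}{r}$), followed by the hockey-stick identity $\sum_{q=0}^{M}\binom{n+l-1+q}{q}=\binom{n+l+M}{n+l}$ applied with $M=n-k-l$, the inner sum evaluates to
$$\sum_{r=l}^{n-k}\binom{n+r-1}{r}\binom{r}{l}=\binom{n+l-1}{l}\binom{2n-k}{n+l}.$$
Substituting this back and simplifying the prefactor via $\frac{(n-1)!\,l!}{(k-1)!(n-k+l)!}\binom{n+l-1}{l}=\binom{n+l-1}{k-1}$ leaves
$$s(n,k)=\sum_{l=0}^{n-k}(-1)^l\binom{n+l-1}{k-1}\binom{2n-k}{n+l}S(n-k+l,l),$$
and reindexing by $j=l+n$ produces the stated formula, with the sign conversion $(-1)^{j-n}=(-1)^{n-k}(-1)^{j-k}$ (since $(-1)^{2(n-k)}=1$) matching the prefactor and inner sign in the lemma.

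The main obstacle I expect is bookkeeping: the sign through two nested binomial expansions, the passage $(-1)^r\binom{-n}{r}\leftrightarrow\binom{n+r-1}{r}$, and the final reindexing all have to cooperate, and an off-by-one in any summation range would wreck the outcome. The Lagrange--Bürmann step is standard but should be invoked explicitly, since the paper has not introduced it elsewhere; once it is in hand, the remainder is purely binomial manipulation driven by the hockey-stick identity.
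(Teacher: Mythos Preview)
Your argument is correct: the Lagrange--B\"urmann step yields $s(n,k)=\dfrac{(n-1)!}{(k-1)!}[w^{n-k}]\bigl(w/(e^w-1)\bigr)^n$, the double binomial expansion followed by the hockey-stick collapse evaluates the coefficient cleanly, and the final reindexing $j=l+n$ lands exactly on the stated formula. I checked the sign bookkeeping and the prefactor simplification $\frac{(n-1)!\,l!}{(k-1)!(n-k+l)!}\binom{n+l-1}{l}=\binom{n+l-1}{k-1}$, and both are right.

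The paper, by contrast, does not prove this lemma at all: it simply cites Abramowitz--Stegun (reference~[4]) for the identity, which is the classical Schl\"afli formula. So your proposal is not merely a different route but a genuine self-contained proof where the paper has none. The Lagrange--B\"urmann approach is a natural one here and has the virtue of explaining \emph{why} the identity holds (it is coefficient extraction from the inverse pair $e^w-1$, $\log(1+z)$), whereas the citation gives no insight. If you want to lighten the dependence on Lagrange--B\"urmann, one could alternatively reach the same coefficient formula for $s(n,k)$ via the N\"orlund representation $s(n,k)=\binom{n-1}{k-1}B_{n-k}^{(n)}(0)$ together with the paper's own generating function for generalized Bernoulli polynomials, but your route is cleaner and entirely standard.
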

\begin{proof}
See ref. [4]
\end{proof}
\begin{proposition}\label{BigB}
Let $S(i,j)$ be the Stirling number of the second kind.
\begin{align}
B_n^{(\alpha)}(x)&=\sum_{k=0}^n (-1)^k\binom{\alpha+k-1}{k}\sum_{l=0}^{n-k}\binom{n}{l+k}x^{n-k-l}\\\notag
&\binom{\alpha+k+l}{l}\frac{(l+k)!k!}{(l+2k)!}S(l+2k,k)\\\notag
\end{align}
\end{proposition}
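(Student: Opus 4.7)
The plan is to extract the coefficient of $t^n$ in the generating function $(t/(e^t-1))^\alpha e^{xt}$ via three nested expansions. First, I would write $t/(e^t-1)=(1+u)^{-1}$ where $u(t)=(e^t-1-t)/t=t/2!+t^2/3!+\cdots$, so the negative binomial series gives $(t/(e^t-1))^\alpha=\sum_{K\ge 0}(-1)^K\binom{\alpha+K-1}{K}u^K$. Since $u$ vanishes at $t=0$, $u^K$ starts at $t^K$, so this sum is effectively finite when truncated at any fixed power of $t$.

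Next, I would expand $u^K=(e^t-1-t)^K/t^K$ by applying the binomial theorem to $(e^t-1)+(-t)$, giving $(e^t-1-t)^K=\sum_{i=0}^K \binom{K}{i}(-t)^{K-i}(e^t-1)^i$. Substituting the exponential generating function for Stirling numbers of the second kind, $(e^t-1)^i=i!\sum_{j\ge i}S(j,i)\,t^j/j!$, then multiplying by $e^{xt}=\sum_m x^m t^m/m!$ and reading off the coefficient of $t^n/n!$ yields a triple sum indexed by $K$, $i$, $m$ with range constraints $0\le i\le K\le n-m$.

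The final step is to reindex via $k=i$ and $l=n-m-k$. The Stirling term becomes $S(l+2k,k)$, the power of $x$ becomes $x^{n-k-l}$, and the binomial from $n!/((n-m)!m!)$ becomes $\binom{n}{l+k}$ (after absorbing $(l+k)!/(l+2k)!$); the residual inner sum is $\sum_{K=k}^{l+k}\binom{\alpha+K-1}{K}\binom{K}{k}$. Here I would apply the subset-of-a-subset identity $\binom{\alpha+K-1}{K}\binom{K}{k}=\binom{\alpha+k-1}{k}\binom{\alpha+K-1}{K-k}$ to pull out $\binom{\alpha+k-1}{k}$, then the hockey-stick identity $\sum_{M=0}^l\binom{(\alpha+k-1)+M}{M}=\binom{\alpha+k+l}{l}$ to collapse what remains, producing exactly the factor $\binom{\alpha+k-1}{k}\binom{\alpha+k+l}{l}$ appearing in the target.

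The main obstacle will be the book-keeping: correctly tracking the finite range $k\le K\le l+k$ of the outer index (forced simultaneously by $i\le K$ and by the vanishing order of $u^K$), and confirming that the Stirling index under the reindexing transforms from $S(n-m+i,i)$ into $S(l+2k,k)$ as required. The algebraic identities themselves are standard binomial manipulations, but the combinatorial reorganization of the triple sum into the desired double sum demands careful attention to index ranges; a small-case sanity check at $k=0$ (which should recover the elementary expansion $B_n^{(\alpha)}(x)=\sum_m\binom{n}{m}B_{n-m}^{(\alpha)}(0)\,x^m$ via the $K=0$ contribution) would make a good consistency test before writing the full argument.
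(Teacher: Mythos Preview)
Your plan is sound and in fact supplies more than the paper does: the paper's own ``proof'' of this proposition is simply a citation to ref.~[3] (Gaboury--Tremblay--Fug\`ere), so there is no in-paper argument to compare against. Your route via the negative binomial expansion of $(1+u)^{-\alpha}$ with $u=(e^t-1-t)/t$, followed by the binomial expansion of $(e^t-1-t)^K$ and the Stirling expansion of $(e^t-1)^i$, is correct; the subset-of-a-subset identity $\binom{\alpha+K-1}{K}\binom{K}{k}=\binom{\alpha+k-1}{k}\binom{\alpha+K-1}{K-k}$ and the hockey-stick sum $\sum_{M=0}^{l}\binom{\alpha+k-1+M}{M}=\binom{\alpha+k+l}{l}$ are exactly what is needed, and under the reindexing $k=i$, $l=n-m-k$ the Stirling argument becomes $S(l+2k,k)$ and the factorial debris becomes $\binom{n}{l+k}\,(l+k)!\,k!/(l+2k)!$ as required.

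One point of order to make explicit when you write it up: the upper bound $K\le k+l$ is \emph{not} visible at the level of the individual binomial pieces $\binom{K}{i}(-1)^{K-i}t^{-i}(e^t-1)^i$, each of which is a genuine power series beginning at $t^0$, so term-by-term the $K$-sum has no cutoff and in fact diverges. The bound comes from $[t^p]u^K=0$ for $p<K$ (since $u^K$ itself has lowest order $t^K$), and you should impose it \emph{before} expanding $u^K$: first write $[t^n](u^K e^{xt})=\sum_{p=K}^{n}[t^p]u^K\cdot x^{n-p}/(n-p)!$, swap to $\sum_{p=0}^{n}\sum_{K=0}^{p}$, and only then substitute the binomial--Stirling expression for $[t^p]u^K$ and interchange the (now finite) $K$- and $i$-sums. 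In that order the range $k\le K\le p=k+l$ is automatic. Your remark that this is ``forced \ldots\ by the vanishing order of $u^K$'' is correct; just be sure to invoke it at the level of $[t^p]u^K$ rather than at the level of its binomial pieces.
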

\begin{proof}
See ref.[3].
\end{proof}
\begin{proposition}\label{prodOfZeros}
For $1<m$
\begin{align}
B_{m-1}^{(m)}(x)&=\prod_{i=1}^{m-1}(x-i)\\\notag
\end{align}
\end{proposition}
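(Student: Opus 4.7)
I would approach this by a direct generating-function calculation. Reading off the definition $(t/(e^t-1))^m e^{xt} = \sum_n B_n^{(m)}(x)\, t^n/n!$ gives $B_{m-1}^{(m)}(x) = (m-1)! \cdot [t^{m-1}]\, t^m e^{xt}/(e^t-1)^m$, and pulling the factor $t^m$ out turns the coefficient extraction into $(m-1)!$ times the residue of $e^{xt}/(e^t-1)^m$ at $t = 0$.

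The crux is evaluating this residue by the formal change of variable $u = e^t - 1$, which is an invertible formal power series with $u'(0) = 1$, so residues transform covariantly. This substitution sends $e^{xt} \mapsto (1+u)^x$, $(e^t - 1)^m \mapsto u^m$, and $dt \mapsto du/(1+u)$, collapsing the residue to $[u^{m-1}](1+u)^{x-1} = \binom{x-1}{m-1}$.

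Multiplying by $(m-1)!$ converts this binomial into the falling factorial $(x-1)(x-2)\cdots(x-(m-1)) = \prod_{i=1}^{m-1}(x-i)$, proving the claim. The only real subtlety is the bookkeeping of the formal-variable change; this can be sidestepped entirely by an induction on $m$ using Proposition \ref{alphaMinus} (with $\alpha = m$, $n = m-2$), which yields the recursion $B_{m-2}^{(m-1)}(x) = (B_{m-1}^{(m)}(x+1) - B_{m-1}^{(m)}(x))/(m-1)$. Combined with the elementary telescoping identity $\prod_{i=1}^{m-1}(x+1-i) - \prod_{i=1}^{m-1}(x-i) = (m-1)\prod_{i=1}^{m-2}(x-i)$ and the single value $B_{m-1}^{(m)}(1) = 0$ (read off directly from the generating function), the inductive step is forced, which pins down $B_{m-1}^{(m)}$ uniquely.
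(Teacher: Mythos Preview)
Your proposal is correct, and the residue argument via the substitution $u=e^t-1$ is clean: the identity $B_{m-1}^{(m)}(x)=(m-1)!\,[t^{-1}]\,e^{xt}/(e^t-1)^m=(m-1)!\,[u^{m-1}](1+u)^{x-1}=(m-1)!\binom{x-1}{m-1}$ does exactly what is needed in one stroke.

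This is a genuinely different route from the paper's. The paper proves the proposition combinatorially: it invokes Proposition~\ref{BigB}, an explicit double-sum formula for $B_n^{(\alpha)}(x)$ in terms of Stirling numbers of the second kind, specializes to $\alpha=n+1$, and then matches the resulting coefficients against the Stirling-number-of-the-first-kind expansion $\prod_{i=1}^{n}(x-i)=\sum_i s(n+1,n-i+1)x^{n-i}$. The match requires two auxiliary results, Lemma~\ref{sS} (an identity expressing $s$ in terms of $S$) and Lemma~\ref{bidentity} (a binomial identity), to reconcile the two coefficient formulas. Your approach bypasses all of this Stirling machinery; the change of variable collapses the computation to a single binomial coefficient. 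The paper's method, on the other hand, illustrates how the general Stirling-based representation of $B_n^{(\alpha)}$ specializes, which may be of independent interest for readers tracking those formulas. One small remark on your inductive alternative: the value $B_{m-1}^{(m)}(1)=0$ is not quite immediate from staring at the generating function, but it drops out of the same residue trick (with $x=1$ the integrand becomes $u^{-m}\,du$), or you can instead anchor the induction using the symmetry $B_{m-1}^{(m)}(m-x)=(-1)^{m-1}B_{m-1}^{(m)}(x)$ from Proposition~\ref{addAlpha}.
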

\begin{proof}
From Prop. \ref{BigB}, letting $r=l+k$,
\begin{align}
B_n^{(n+1)}(x)&=\sum_{k=0}^n (-1)^k\binom{n+k}{k}\sum_{l=0}^{n-k}\binom{n}{l+k}x^{n-k-l}\\\notag
&\binom{n+k+l+1}{l}\frac{(l+k)!k!}{(l+2k)!}S(l+2k,k)\\\notag
&=\sum_{r=0}^n\sum_{k=0}^r(-1)^k\binom{n+k}{k}\binom{n}{r}\binom{n+r+1}{r-k}\frac{r!k!}{(r+k)!}S(r+k,k)x^{n-r}\\\notag
\end{align}
while
\begin{align}
\prod_{i=1}^{n}(x-i)&=\sum_{i=0}^n s(n+1,n-i+1)x^{n-i}\\\notag
\end{align}
where $s(i,j)$ is the Stirling number  of the first kind.
From Lemma \ref{sS},
\begin{align}
s(n+1,n-r+1)&=\sum_{k=0}^{r}(-1)^{k+r}\binom{n+k}{n-r}\binom{n+r+1}{n+k+1}S(k+r,k)\\\notag
\end{align}
The proposition then follows from Lemma \ref{bidentity}
\end{proof}
\begin{proposition}
\begin{align}
B_{n-k}^{(\alpha-k)}(x)=\frac{n!}{k!}\sum_{i=0}^k(-1)^{k-i} \binom{k}{i}B_n^{(\alpha)}(x+i)\\\notag
\end{align}
\end{proposition}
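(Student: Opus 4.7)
The plan is to argue from generating functions. Since
$$\left(\frac{t}{e^t-1}\right)^{\alpha-k} e^{xt} = \left(\frac{e^t-1}{t}\right)^{k}\left(\frac{t}{e^t-1}\right)^{\alpha} e^{xt}$$
and $(e^t-1)^k = \sum_{i=0}^{k}(-1)^{k-i}\binom{k}{i} e^{it}$, each $e^{it}$ factor can be pulled into $e^{xt}$ to give $e^{(x+i)t}$. The left side then expands as a power series in $t$ with coefficients involving $B_m^{(\alpha-k)}(x)$, while the right side becomes $t^{-k}$ times a finite-difference combination of the generating function for $B_N^{(\alpha)}(x+i)$.

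The next step is to observe that, because the left side has no pole at $t=0$, the contributions with $N<k$ on the right must cancel; this is the classical fact that the $k$-th forward difference annihilates any polynomial of degree $<k$, so the $t^{-k},\dots,t^{-1}$ terms vanish automatically. Re-indexing $N=m+k$ and matching the coefficient of $t^m$ on both sides, then setting $m=n-k$, yields the claimed identity.

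The main obstacle is purely bookkeeping: tracking the factorial prefactor through the index shift from $N=m+k$ back to $m$, since matching $\tfrac{t^{m+k}}{(m+k)!}$ against $\tfrac{t^m}{m!}$ introduces a ratio that must be reconciled with the $\tfrac{n!}{k!}$ asserted in the statement. As a sanity check I would verify the $k=1$ case against Prop.\ \ref{alphaMinus} at the outset, which pins down the constant unambiguously. A parallel inductive proof is also available and serves as cross-verification: the base case $k=1$ is Prop.\ \ref{alphaMinus} itself, and the inductive step applies that proposition once more to $B_{n-k}^{(\alpha-k)}(x)$, with Pascal's rule $\binom{k-1}{i}+\binom{k-1}{i-1}=\binom{k}{i}$ collapsing the two resulting finite-difference sums into the single one appearing in the statement.
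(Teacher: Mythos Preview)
Your proposal is correct and follows essentially the same generating-function argument as the paper: both expand $(e^t-1)^k$ by the binomial theorem, absorb each $e^{it}$ into $e^{xt}$, and compare coefficients of $t^n$ in $\left(\frac{t}{e^t-1}\right)^\alpha(e^t-1)^k e^{xt}=t^k\left(\frac{t}{e^t-1}\right)^{\alpha-k}e^{xt}$. Your caution about the prefactor is well placed: the computation actually produces $\frac{(n-k)!}{n!}$, not the $\frac{n!}{k!}$ printed in the statement (and repeated verbatim in the paper's concluding line), exactly as your suggested $k=1$ check against Prop.~\ref{alphaMinus} reveals.
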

\begin{proof}
Consider the generating function
\begin{align}
\left(\frac{t}{e^t-1}\right)^\alpha\left(e^t-1\right)^k e^{xt}&=\left(\frac{t}{e^t-1}\right)^\alpha\left(\sum_{i=0}^k (-1)^{k-i}\binom{k}{i}e^{it}\right)e^{xt}\\\notag
&=\left(\frac{t}{e^t-1}\right)^\alpha\left(\sum_{i=0}^k(-1)^{k-i} \binom{k}{i}e^{(x+i)t}\right)\\\notag
&=\sum_{i=0}^k(-1)^{k-i} \binom{k}{i}\left(\frac{t}{e^t-1}\right)^\alpha e^{(x+i)t}\\\notag
&=\sum_{j=0}^\infty\sum_{i=0}^k(-1)^{k-i} \binom{k}{i}B_j^{(\alpha)}(x+i)\left(\frac{t^j}{j!}\right)\\\notag
\end{align}
while
\begin{align}
\left(\frac{t}{e^t-1}\right)^\alpha\left(e^t-1\right)^k e^{xt}&=t^k\left(\frac{t}{e^t-1}\right)^{\alpha-k}e^{xt}\\\notag
&=\sum_{j=0}^\infty B_j^{(\alpha-k)}(x)\left(\frac{t^{j+k}}{j!}\right)\\\notag
&=\sum_{j=0}^\infty\frac{(j+k)!}{j!} B_j^{(\alpha-k)}(x)\left(\frac{t^{j+k}}{(j+k)!}\right)\\\notag
&=\sum_{j=k}^\infty\frac{j!}{(j-k)!} B_{j-k}^{(\alpha-k)}(x)\left(\frac{t^j}{j!}\right)\\\notag
\end{align}
which shows that
\begin{align}
B_{n-k}^{(\alpha-k)}(x)=\frac{n!}{k!}\sum_{i=0}^k(-1)^{k-i} \binom{k}{i}B_n^{(\alpha)}(x+i)\\\notag
\end{align}
\end{proof}

\begin{corollary}
For $m>1,\text{  } 0\le j<m-1$
\begin{align}
\sum_{k=0}^{m-1}\binom{m-1}{k}B_k^{(m)}(j)&=0\\\notag
\end{align}
\end{corollary}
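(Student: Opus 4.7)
The plan is to recognize the left-hand side as a binomial convolution that collapses, via an addition formula for generalized Bernoulli polynomials, into a single evaluation of $B_{m-1}^{(m)}$ at an integer point, and then to read off the value using the product representation from Proposition~\ref{prodOfZeros}.

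First, I would note that the generating-function argument given in Proposition~\ref{xplusy} for ordinary Bernoulli polynomials is actually insensitive to the exponent $\alpha$: the factorization $e^{(x+y)t}=e^{yt}\cdot e^{xt}$ does not interact with $(t/(e^t-1))^{\alpha}$ at all, so literally the same calculation yields the generalized addition formula
\[
B_n^{(\alpha)}(x+y)=\sum_{i=0}^{n}\binom{n}{i}B_i^{(\alpha)}(x)\,y^{n-i}.
\]
Specializing $\alpha=m$, $n=m-1$, $x=j$, $y=1$ collapses the sum in the corollary into the single term $B_{m-1}^{(m)}(j+1)$.

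Second, I would invoke Proposition~\ref{prodOfZeros}, which asserts $B_{m-1}^{(m)}(x)=\prod_{i=1}^{m-1}(x-i)$. Evaluating at $x=j+1$ gives
\[
\sum_{k=0}^{m-1}\binom{m-1}{k}B_k^{(m)}(j)=\prod_{i=1}^{m-1}(j+1-i).
\]
Whenever $j$ is an integer with $0\le j<m-1$, the shifted argument $j+1$ lies in $\{1,2,\dots,m-1\}$, which is precisely the root set of this product, so the expression vanishes.

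I do not anticipate any real obstacle: the corollary is a two-line consequence of results already at hand. The only judgment call is to resist expanding term by term and instead recognize the sum as the $y=1$ instance of the addition formula, after which Proposition~\ref{prodOfZeros} finishes the argument immediately. A small bookkeeping point worth mentioning is that the generalized version of Proposition~\ref{xplusy} used above is not stated explicitly in the excerpt, but it follows by the identical generating-function manipulation and so can be cited without a separate proof.
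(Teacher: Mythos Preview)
Your proposal is correct and is essentially the same argument as the paper's: both identify the sum as $B_{m-1}^{(m)}(j+1)$ and then apply Proposition~\ref{prodOfZeros}. The only cosmetic difference is that you invoke the addition formula from Proposition~\ref{xplusy} directly (whose proof in the paper is already stated for general $\alpha$, so your caveat is unnecessary), whereas the paper rephrases the same identity as reading off the $(m-1)$-st Taylor coefficient of $e^t\bigl(\tfrac{t}{e^t-1}\bigr)^m e^{xt}=\bigl(\tfrac{t}{e^t-1}\bigr)^m e^{(x+1)t}$.
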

\begin{proof}
Note $$\sum_{k=0}^{m-1}\binom{m-1}{k}B_k^{(m)}(x)$$ corresponds to the $m-1$-st derivative of
$$e^t\left(\frac{t}{e^t-1}\right)^{(m)} e^{xt}=\left(\frac{t}{e^t-1}\right)^{(m)} e^{(x+1)t}$$
which corresponds to $B_{m-1}^{(m)}(x+1)$, which, by the Proposition \ref{prodOfZeros}, equals 0 for $x=0,\ldots,m-2$.
\end{proof}
\begin{proposition}\label{derivB}
$$B_{n-1}^{(\alpha)}(x)=\frac{1}{n}\frac{ \mathrm{  d  }   }{\mathrm{d}x }B_n^{(\alpha)}(x)$$
\end{proposition}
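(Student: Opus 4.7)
The plan is to mimic the generating-function arguments used throughout the preceding propositions. I start from the defining generating function
\begin{align}\notag
\left(\frac{t}{e^t-1}\right)^\alpha e^{xt}=\sum_{n=0}^\infty B_n^{(\alpha)}(x)\frac{t^n}{n!}
\end{align}
and differentiate both sides with respect to $x$. Since the factor $\left(\frac{t}{e^t-1}\right)^\alpha$ does not depend on $x$, and $\frac{\partial}{\partial x}e^{xt}=t\,e^{xt}$, the left-hand side becomes $t\left(\frac{t}{e^t-1}\right)^\alpha e^{xt}$.

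Next I expand this differentiated left-hand side using the generating function again, obtaining $\sum_{n=0}^\infty B_n^{(\alpha)}(x)\tfrac{t^{n+1}}{n!}$, and then re-index so the power of $t$ is $t^n$ with $n\ge 1$. This gives $\sum_{n=1}^\infty n\,B_{n-1}^{(\alpha)}(x)\tfrac{t^n}{n!}$. On the right-hand side, interchanging the derivative with the sum (the series converges absolutely in a neighborhood of $t=0$) produces $\sum_{n=0}^\infty \frac{d}{dx}B_n^{(\alpha)}(x)\tfrac{t^n}{n!}$, whose $n=0$ term vanishes because $B_0^{(\alpha)}(x)=1$.

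Finally, equating coefficients of $t^n/n!$ for $n\ge 1$ yields $nB_{n-1}^{(\alpha)}(x)=\frac{d}{dx}B_n^{(\alpha)}(x)$, which rearranges to the claim. There is no real obstacle here; the only point requiring any care is the index shift after differentiation, to ensure the factor of $n$ appears in exactly the right place.
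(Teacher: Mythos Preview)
Your proof is correct and follows essentially the same generating-function argument as the paper: differentiate the defining series in $x$, observe that the left side becomes $t$ times the original generating function, and compare coefficients after an index shift. If anything, your version is slightly more careful about the re-indexing and the vanishing of the $n=0$ term.
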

\begin{proof}

$$\left(\frac{t}{e^t-1}\right)^\alpha e^{xt}=\sum_{n=0}^\infty B_n^{(\alpha)}(x)\frac{t^n}{n!}$$
Differentiating the generating function
\begin{align}
\frac{ \mathrm{  d  }   }{\mathrm{d}x }\left(\left(\frac{t}{e^t-1}\right)^\alpha e^{xt}\right)&=t\left(\frac{t}{e^t-1}\right)^\alpha e^{xt}\\\notag
&=\sum_{n=0}^\infty B_n^{(\alpha)}(x)\frac{t^{n+1}}{n!}\\\notag
&=\sum_{n=0}^\infty \frac{ \mathrm{  d  }   }{\mathrm{d}x }B_n^{(\alpha)}(x)\frac{t^{n}}{n!}\\\notag
\end{align}
$$\implies \frac{1}{n}\frac{ \mathrm{  d  }   }{\mathrm{d}x }B_n^{(\alpha)}(x)=B_{n-1}^{(\alpha)}(x)$$
\end{proof}
\begin{corollary}\label{Dprod}
\begin{align}
B_{m-k}^{(m)}(x)&=\frac{(m-k)!}{(m-1)!}\frac{  \mathrm{  d^{k-1}  }   }{\mathrm{d}x^{k-1}  }  \prod_{i=1}^{m-1}(x-i)\\\notag
\end{align}
\end{corollary}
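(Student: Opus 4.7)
The plan is to prove this by a straightforward induction on $k$, using Proposition \ref{prodOfZeros} as the base case and Proposition \ref{derivB} as the engine driving the induction.

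For the base case $k=1$, the claim reduces to $B_{m-1}^{(m)}(x)=\frac{(m-1)!}{(m-1)!}\frac{d^{0}}{dx^{0}}\prod_{i=1}^{m-1}(x-i) = \prod_{i=1}^{m-1}(x-i)$, which is exactly Proposition \ref{prodOfZeros}.

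For the inductive step, I would assume the formula holds for some $k-1$, i.e.\
\[
B_{m-(k-1)}^{(m)}(x)=\frac{(m-k+1)!}{(m-1)!}\frac{d^{k-2}}{dx^{k-2}}\prod_{i=1}^{m-1}(x-i),
\]
and then apply Proposition \ref{derivB} (with $n=m-k+1$ and $\alpha=m$), which gives
\[
B_{m-k}^{(m)}(x)=\frac{1}{m-k+1}\frac{d}{dx}B_{m-k+1}^{(m)}(x).
\]
Substituting the inductive hypothesis and noting that $\frac{1}{m-k+1}\cdot\frac{(m-k+1)!}{(m-1)!}=\frac{(m-k)!}{(m-1)!}$ and that one more derivative raises the order from $k-2$ to $k-1$ yields the claim for $k$.

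There is no genuine obstacle here; the entire argument is mechanical once one recognizes that Proposition \ref{derivB} lets us trade a decrement of the lower index for a derivative (with a corresponding factor), and Proposition \ref{prodOfZeros} supplies the closed form at the top of the ladder. The only thing to watch is the bookkeeping of the factorial prefactor across the induction, which telescopes cleanly as shown above.
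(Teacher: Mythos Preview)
Your proof is correct and is precisely the argument the paper intends: its one-line proof (``Previous corollary applied to Proposition \ref{prodOfZeros}'') is just a terse pointer to iterating Proposition \ref{derivB} starting from Proposition \ref{prodOfZeros}, which is exactly the induction you have written out in detail.
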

\begin{proof}
Previous corollary applied to Proposition \ref{prodOfZeros} .
\end{proof}
For reference:
\begin{proposition}
\begin{align}\notag
\frac{ \mathrm{  d  }   }{\mathrm{d}t }&\left(\left(\frac{t}{e^t-1}\right)^{\alpha} e^{xt}\right)
&=\alpha\left(\frac{t}{e^t-1}\right)^{\alpha}\frac{1}{t}e^{xt}-\alpha\left(\frac{t}{e^t-1}\right)^{\alpha+1}\frac{1}{t}e^{xt}  +(x-\alpha)\left(\frac{t}{e^t-1}\right)^{\alpha}e^{xt}\\\notag
\end{align}
\end{proposition}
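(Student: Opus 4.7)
The plan is to prove this by straightforward logarithmic differentiation, then repackage the result so that the exponent $\alpha$ gets promoted to $\alpha+1$ in exactly one term. There is no deep idea required — the entire content is algebraic manipulation of $e^t/(e^t-1)$.

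First I would set $f(t)=\left(\frac{t}{e^t-1}\right)^{\alpha}e^{xt}$ and compute
\[
\frac{f'(t)}{f(t)}=\frac{d}{dt}\Bigl(\alpha\ln t-\alpha\ln(e^t-1)+xt\Bigr)=\frac{\alpha}{t}-\frac{\alpha e^t}{e^t-1}+x,
\]
which is valid on a punctured neighborhood of $0$ (and the final formula extends by continuity, since both sides are entire functions times $t$-power series). Multiplying back by $f$ gives three terms: $\tfrac{\alpha}{t}f$, $-\tfrac{\alpha e^t}{e^t-1}f$, and $xf$.

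Next I would use the identity $\frac{e^t}{e^t-1}=1+\frac{1}{e^t-1}$ to split the middle term as $-\alpha f-\frac{\alpha}{e^t-1}f$. The constant piece $-\alpha f$ combines with $xf$ to yield the factor $(x-\alpha)$ appearing in the stated right-hand side. For the remaining piece, the key observation is
\[
\frac{1}{e^t-1}f=\frac{1}{t}\cdot\frac{t}{e^t-1}\left(\frac{t}{e^t-1}\right)^{\alpha}e^{xt}=\frac{1}{t}\left(\frac{t}{e^t-1}\right)^{\alpha+1}e^{xt},
\]
which is exactly what converts the exponent from $\alpha$ to $\alpha+1$ in the second term of the proposition.

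Collecting the three pieces yields the claimed identity. The only thing to be careful about is the sign on the $(\alpha+1)$-term and ensuring the $\tfrac{1}{t}$ factors are tracked correctly; there is no genuine obstacle, since this is a one-line computation dressed as a proposition, presumably recorded here for reuse in later arguments about derivatives of the generating function.
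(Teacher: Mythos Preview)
Your proof is correct. The paper's argument is the same computation organized slightly differently: it applies the product and quotient rules directly to obtain
\[
\alpha\left(\frac{t}{e^t-1}\right)^{\alpha-1}\frac{e^t-1-te^t}{(e^t-1)^2}e^{xt}+x\left(\frac{t}{e^t-1}\right)^{\alpha}e^{xt},
\]
then inserts $+t-t$ in the numerator $e^t-1-te^t$ to split off the three displayed terms. Your logarithmic differentiation together with $e^t/(e^t-1)=1+1/(e^t-1)$ is an equivalent (and arguably tidier) bookkeeping of the same algebra; no substantive difference.
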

\begin{proof}
\begin{align}\notag
\frac{ \mathrm{  d  }   }{\mathrm{d}t }&\left(\left(\frac{t}{e^t-1}\right)^{\alpha} e^{xt}\right)=\alpha\left(\frac{t}{e^t-1}\right)^{\alpha-1}\left(\frac{e^t-1-t e^t}{\left(e^t-1\right)^2}\right)e^{xt}+x\left(\frac{t}{e^t-1}\right)^{\alpha}e^{xt}\\\notag
&=\alpha\left(\frac{t}{e^t-1}\right)^{\alpha-1}\left(\frac{e^t-1-t e^t+t-t}{\left(e^t-1\right)^2}\right)e^{xt}+x\left(\frac{t}{e^t-1}\right)^{\alpha}e^{xt}\\\notag
&=\alpha\left(\frac{t}{e^t-1}\right)^{\alpha-1}\left(    \left(\frac{1}{e^t-1}-\frac{t}{(e^t-1)^2}\right)e^{xt}-\left(\frac{t}{e^t-1}\right)e^{xt}        \right) +x\left(\frac{t}{e^t-1}\right)^{\alpha}e^{xt}\\\notag
&=\alpha\left(\frac{t}{e^t-1}\right)^{\alpha}\frac{1}{t}e^{xt}-\alpha\left(\frac{t}{e^t-1}\right)^{\alpha+1}\frac{1}{t}e^{xt}  +(x-\alpha)\left(\frac{t}{e^t-1}\right)^{\alpha}e^{xt}\\\notag
\end{align}
\end{proof}
\begin{lemma}\label{reduced}
For $m, j\ge 0$
\begin{align}\notag
&2(m-j-1)B_{2j+1}^{(2m-1)}(m)-(m-1)(2j+1)B_{2j}^{(2m-1)}(m)=0\\\notag
&(2m-2j-1)B_{2j}^{(2m-1)}(m)-(2m-1)B_{2j}^{(2m)}(m)-(2j)(m-1)B_{2j-1}^{(2m-1)}(m)=0\\\notag
&(2m-2j-1)B_{2j+1}^{(2m)}(m+k)-2mB_{2j+1}^{(2m+1)}(m+k)-(m-k)(2j+1) B_{2j}^{(2m)}(m+k)=0\\\notag
\end{align}
\end{lemma}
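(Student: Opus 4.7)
The plan is to derive all three identities as specializations of a single master recurrence extracted from the generating-function derivative formula proved in the Proposition immediately preceding this lemma. Writing $f(t) = \bigl(t/(e^t-1)\bigr)^\alpha e^{xt}$, I would multiply that derivative identity through by $t$ to obtain
\begin{align*}
t\,\frac{df}{dt} \;=\; \alpha f \;-\; \alpha \left(\frac{t}{e^t-1}\right)^{\alpha+1} e^{xt} \;+\; t(x-\alpha)f.
\end{align*}
Comparing coefficients of $t^n/n!$ on each side, via the standard translations $t\,df/dt \leftrightarrow nB_n^{(\alpha)}(x)$, $f \leftrightarrow B_n^{(\alpha)}(x)$, $(t/(e^t-1))^{\alpha+1}e^{xt} \leftrightarrow B_n^{(\alpha+1)}(x)$, and $tf \leftrightarrow nB_{n-1}^{(\alpha)}(x)$, yields the master recurrence
\begin{align*}
(n-\alpha)B_n^{(\alpha)}(x) \;+\; \alpha B_n^{(\alpha+1)}(x) \;-\; n(x-\alpha)B_{n-1}^{(\alpha)}(x) \;=\; 0.
\end{align*}

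Each of the three stated identities is then a direct substitution. The second identity is the case $n=2j$, $\alpha=2m-1$, $x=m$ of the master recurrence, after multiplying through by $-1$. The third identity is the case $n=2j+1$, $\alpha=2m$, $x=m+k$, again after an overall sign flip. The first identity is the case $n=2j+1$, $\alpha=2m-1$, $x=m$, with one additional simplification: the substitution produces the term $(2m-1)B_{2j+1}^{(2m)}(m)$, which vanishes by Proposition \ref{oddZero} (since $m$ is exactly half of the upper index $2m$ and the lower index $2j+1$ is odd), leaving precisely the two-term relation claimed.

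The only real obstacle is bookkeeping: one must choose the parameters in the master recurrence so that the index pair $(\alpha,\alpha+1)$ lines up with the pair $(2m-1,2m)$ or $(2m,2m+1)$ appearing in the statement, and one must track the sign of $n-\alpha$, which evaluates to $-2(m-j-1)$ or $-(2m-2j-1)$ depending on the case. Apart from these sign issues, no new identity is required beyond the preceding derivative formula and the vanishing supplied by Proposition \ref{oddZero}.
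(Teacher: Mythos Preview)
Your proposal is correct and follows essentially the same route as the paper: both extract the three relations by comparing coefficients in the derivative identity of the preceding Proposition, with the first identity requiring the extra input $B_{2j+1}^{(2m)}(m)=0$ from Proposition~\ref{oddZero}. The only difference is cosmetic---you multiply through by $t$ once to obtain a clean master recurrence and then specialize, whereas the paper treats the three parameter choices $(\alpha,x)=(2m-1,m)$ and $(2m,m+k)$ separately and matches coefficients of $t^{2j}$ and $t^{2j-1}$ directly in the presence of the $1/t$ factors.
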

\begin{proof}
From the previous proposition,
\begin{align}\notag
\frac{ \mathrm{  d  }   }{\mathrm{d}t }&\left(\left(\frac{t}{e^t-1}\right)^{2m-1} e^{mt}\right)\\\notag
&=(2m-1)\left(\frac{t}{e^t-1}\right)^{2m-1}\frac{1}{t}e^{mt}-(2m-1)\left(\frac{t}{e^t-1}\right)^{2m}\frac{1}{t}e^{mt}  -(m-1)\left(\frac{t}{e^t-1}\right)^{2m-1}e^{mt}\\\notag
\end{align}
But now, thinking of the generator function as a Taylor series in $t$, and taking the derivative wrt $t$, and then matching up the coefficients of $t^{2j}$, we get
\begin{align}\notag
\frac{2m-1}{2j+1}B_{2j+1}^{(2m-1)}(m)-\frac{2m-1}{2j+1}B_{2j+1}^{(2m)}(m)-(m-1)B_{2j}^{(2m-1)}(m)&=B_{2j+1}^{(2m-1)}(m)\\\notag
\end{align}
By Prop. \ref{oddZero}, the second term on the left is zero, yielding,
\begin {align}\notag
2(m-j-1))B_{2j+1}^{(2m-1)}(m)-(2j+1)(m-1)B_{2j}^{(2m-1)}(m)&=0\\\notag
\end {align}
Note, if we look at the coefficients of $t^{2j-1}$ we get
\begin{align}\notag
&\frac{2m-1}{2j}B_{2j}^{(2m-1)}(m)-\frac{2m-1}{2j}B_{2j}^{(2m)}(m)-(m-1)B_{2j-1}^{(2m-1)}(m)=B_{2j}^{(2m-1)}(m)\\\notag
\end{align}
or,
\begin{align}\notag
&(2m-2j-1)B_{2j}^{(2m-1)}(m)-(2m-1)B_{2j}^{(2m)}(m)-(2j)(m-1)B_{2j-1}^{(2m-1)}(m)=0\\\notag
\end{align}
Also, from the previous proposition,
\begin{align}\notag
\frac{ \mathrm{  d  }   }{\mathrm{d}t }&\left(\left(\frac{t}{e^t-1}\right)^{2m} e^{(m+k)t}\right)\\\notag
&=2m\left(\frac{t}{e^t-1}\right)^{2m}\frac{1}{t}e^{(m+k)t}-2m\left(\frac{t}{e^t-1}\right)^{2m+1}\frac{1}{t}e^{(m+k)t} -(m-k)\left(\frac{t}{e^t-1}\right)^{2m}e^{(m+k)t}\\\notag
\end{align}
Again, thinking of the generator function as a Taylor series in $t$, and taking the derivative wrt $t$, and then matching up the coefficients of $t^{2j}$, we get
\begin{align}\notag
&\frac{2m}{2j+1}B_{2j+1}^{(2m)}(m+k)-\frac{2m}{2j+1}B_{2j+1}^{(2m+1)}(m+k)-(m-k) B_{2j}^{(2m)}(m+k)=B_{2j+1}^{(2m)}(m+k)\\\notag
&2mB_{2j+1}^{(2m)}(m+k)-2mB_{2j+1}^{(2m+1)}(m+k)-(m-k)(2j+1) B_{2j}^{(2m)}(m+k)=(2j+1)B_{2j+1}^{(2m)}(m+k)\\\notag
&(2m-2j-1)B_{2j+1}^{(2m)}(m+k)-2mB_{2j+1}^{(2m+1)}(m+k)-(m-k)(2j+1) B_{2j}^{(2m)}(m+k)=0\\\notag
\end{align}
\end{proof}
\begin{lemma}\label{coeffx}
For $m>k\ge 0$,
\begin{align}
\sum_{i=k+1}^m \binom{2m-1}{2i-1}\biggl(\binom{2i}{2k}-m\binom{2i-1}{2k}\biggr)B_{2m-2i}^{(2m)}(m)=0\\\notag
\end{align}
\end{lemma}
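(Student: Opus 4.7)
The plan is to reindex the sum, convert the bracketed binomial expression into two recognizable generalized Bernoulli polynomial evaluations, and then finish by invoking the first identity of Lemma \ref{reduced}.

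First I would substitute $n = 2m-2i$, so that the sum runs over even $n$ from $0$ to $2m-2k-2$ with summand $\binom{2m-1}{n}\bigl[\binom{2m-n}{2k} - m\binom{2m-n-1}{2k}\bigr] B_n^{(2m)}(m)$. Since Proposition \ref{oddZero} gives $B_{2j+1}^{(2m)}(m) = 0$, I can extend the range to all $n$ from $0$ to $2m-2k-1$ at no cost. Pascal's identity rewrites the bracket as $(1-m)\binom{2m-n-1}{2k} + \binom{2m-n-1}{2k-1}$, and the subset-of-a-subset identity $\binom{2m-1}{n}\binom{2m-n-1}{j} = \binom{2m-1}{j}\binom{2m-j-1}{n}$ applied to each piece pulls the $k$-dependent factors outside the $n$-sum. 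What remains are inner sums $\sum_{n=0}^{N}\binom{N}{n}B_n^{(2m)}(m)$ for $N = 2m-2k-1$ and (after restoring the missing top index) $N = 2m-2k$, which by Proposition \ref{xplusy} applied with $y=1$ evaluate to $B_{2m-2k-1}^{(2m)}(m+1)$ and $B_{2m-2k}^{(2m)}(m+1) - B_{2m-2k}^{(2m)}(m)$ respectively.

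Next I would use Proposition \ref{alphaMinus} in the form $B_{n+1}^{(\alpha)}(x+1) - B_{n+1}^{(\alpha)}(x) = (n+1)B_n^{(\alpha-1)}(x)$ twice: directly on the second quantity, producing $(2m-2k)B_{2m-2k-1}^{(2m-1)}(m)$, and on the first after noting via Proposition \ref{oddZero} that $B_{2m-2k-1}^{(2m)}(m) = 0$ so that $B_{2m-2k-1}^{(2m)}(m+1)$ is itself a difference, producing $(2m-2k-1)B_{2m-2k-2}^{(2m-1)}(m)$. Using the prefactor relation $\binom{2m-1}{2k-1}(2m-2k) = 2k\binom{2m-1}{2k}$ and setting $j = m-k-1$, the entire expression reduces to a nonzero scalar multiple of $2(m-j-1)B_{2j+1}^{(2m-1)}(m) - (m-1)(2j+1)B_{2j}^{(2m-1)}(m)$, which is exactly the first identity in Lemma \ref{reduced} and hence vanishes.

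The main obstacle is the binomial bookkeeping: the two inner sums must be brought to matching shape before Proposition \ref{xplusy} applies cleanly, and this requires carefully inserting the zero term at $n=2m-2k-1$ in one sum and subtracting the top term $B_{2m-2k}^{(2m)}(m)$ in the other. The edge case $k = 0$ is handled uniformly, since $\binom{2m-1}{-1} = 0$ eliminates the second summand, while the first vanishes by the $j=m-1$ instance of the first identity in Lemma \ref{reduced}.
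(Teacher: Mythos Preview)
Your proof is correct and follows essentially the same route as the paper: reindex, extend the range via Proposition~\ref{oddZero}, pull out a common binomial factor using the subset-of-a-subset identity, evaluate the inner sums through Propositions~\ref{xplusy} and~\ref{alphaMinus}, and reduce to the first identity of Lemma~\ref{reduced}. The only difference is that you decompose the bracket via Pascal's rule, whereas the paper writes $\binom{2m-j}{2k}=\frac{2m-j}{2m-j-2k}\binom{2m-j-1}{2k}$ and splits the resulting rational coefficient; both manipulations lead to the same two inner sums.
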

\begin{proof}
For justification of steps, see comments below.
\begin{align}
&\sum_{i=k+1}^m \binom{2m-1}{2i-1}\biggl(\binom{2i}{2k}-m\binom{2i-1}{2k}\biggr)B_{2m-2i}^{(2m)}(m)\\\notag
&=\sum_{j=0}^{m-k-1} \binom{2m-1}{2m-2j-1}\biggl(\binom{2m-2j}{2k}-m\binom{2m-2j-1}{2k}\biggr)B_{2j}^{(2m)}(m)\\\notag
&=\sum_{j=0}^{2m-2k-1} \binom{2m-1}{2m-j-1}\biggl(\binom{2m-j}{2k}-m\binom{2m-j-1}{2k}\biggr)B_{j}^{(2m)}(m)\\\notag
&=\sum_{j=0}^{2m-2k-1} \biggl(\left(\frac{2m-j}{2m-j-2k}-m\right)\binom{2m-1}{2m-j-1}\binom{2m-j-1}{2k}\biggr)B_{j}^{(2m)}(m)\\\notag
&=\sum_{j=0}^{2m-2k-1} \biggl(\left(\frac{2m-j}{2m-j-2k}-m\right)\binom{2m-1}{2k}\binom{2m-2k-1}{j}\biggr)B_{j}^{(2m)}(m)\\\notag
&=\binom{2m-1}{2k}\sum_{j=0}^{2m-2k-1} \biggl(\left(\frac{2k}{2m-j-2k}+1-m\right)\binom{2m-2k-1}{j}\biggr)B_{j}^{(2m)}(m)\\\notag
&=\binom{2m-1}{2k}\biggl(\sum_{j=0}^{2m-2k-1} \frac{2k}{2m-j-2k}\binom{2m-2k-1}{j}B_{j}^{(2m)}(m)\\\notag
&+\sum_{j=0}^{2m-2k-1} (1-m)\binom{2m-2k-1}{j}B_{j}^{(2m)}(m)\biggr)\\\notag
&=\binom{2m-1}{2k}\biggl(\sum_{j=0}^{2m-2k-1} \frac{2k}{2m-2k}\binom{2m-2k}{j}B_{j}^{(2m)}(m)+\sum_{j=0}^{2m-2k-1} (1-m)\binom{2m-2k-1}{j}B_{j}^{(2m)}(m)\biggr)\\\notag
&=\binom{2m-1}{2k}\biggl(2k B_{2m-2k-1}^{(2m-1)}(m)+\sum_{j=0}^{2m-2k-1} (1-m)\binom{2m-2k-1}{j}B_{j}^{(2m)}(m)\biggr)\\\notag
&=\binom{2m-1}{2k}\biggl(2k B_{2m-2k-1}^{(2m-1)}(m)+\sum_{j=0}^{2m-2k-2} (1-m)\binom{2m-2k-1}{j}B_{j}^{(2m)}(m)+(1-m)B_{2m-2k-1}^{(2m)}(m)\biggr)\\\notag
&=\binom{2m-1}{2k}\biggl(2k B_{2m-2k-1}^{(2m-1)}(m)+(1-m)(2m-2k-1)B_{2m-2k-2}^{(2m-1)}(m)+(1-m)B_{2m-2k-1}^{(2m)}(m)\biggr)\\\notag
&=\binom{2m-1}{2(m-j-1)}\biggl(2(m-j-1) B_{2j+1}^{(2m-1)}(m)-(m-1)(2j+1)B_{2j}^{(2m-1)}(m)\biggr)\\\notag
\end{align}
Above, the first step is from setting $j=m-i$. The second is due to Proposition \ref{oddZero}. The third and fourth steps use the fact that
\begin{align}
\binom{n}{k}\binom{k}{j}&=\binom{n}{j}\binom{n-j}{k-j}\\\notag
\end{align}
Steps 8 and 10 use Prop. \ref{alphaMinus}. The last step uses Props. \ref{oddZero}, \ref{alphaMinus} and sets $k=m-j-1$.
Now, by Lemma \ref{reduced}, we are done.
\end{proof}
\begin{corollary}\label{stir}
For $m>k\ge 0$,
\begin{align}
\sum_{i=k+1}^m\sum_{r=2i-1}^{2m-1} \binom{r}{2i-1}\biggl(\binom{2i}{2k}-m\binom{2i-1}{2k}\biggr)s(2m,r+1)m^{r-2i+1}=0\\\notag
\end{align}

\end{corollary}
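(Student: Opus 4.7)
The plan is to derive the corollary as a direct algebraic restatement of Lemma \ref{coeffx}, by expressing $B_{2m-2i}^{(2m)}(m)$ in terms of Stirling numbers of the first kind. Since Lemma \ref{coeffx} already gives
\[\sum_{i=k+1}^m \binom{2m-1}{2i-1}\biggl(\binom{2i}{2k}-m\binom{2i-1}{2k}\biggr)B_{2m-2i}^{(2m)}(m)=0,\]
the only task is to rewrite $\binom{2m-1}{2i-1}B_{2m-2i}^{(2m)}(m)$ as the inner sum $\sum_{r=2i-1}^{2m-1}\binom{r}{2i-1}s(2m,r+1)m^{r-2i+1}$.

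First I would apply Corollary \ref{Dprod} with $m\mapsto 2m$ and the exponent index taken so that $B_{2m-2i}^{(2m)}(x)$ equals $\frac{(2m-2i)!}{(2m-1)!}\frac{d^{2i-1}}{dx^{2i-1}}\prod_{j=1}^{2m-1}(x-j)$. Then, using the Stirling expansion from the proof of Proposition \ref{prodOfZeros},
\[\prod_{j=1}^{2m-1}(x-j)=\sum_{r=0}^{2m-1}s(2m,r+1)\,x^{r},\]
differentiating $2i-1$ times and evaluating at $x=m$ yields
\[B_{2m-2i}^{(2m)}(m)=\frac{(2m-2i)!}{(2m-1)!}\sum_{r=2i-1}^{2m-1}s(2m,r+1)\,\frac{r!}{(r-2i+1)!}\,m^{r-2i+1}.\]

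Next I would multiply through by $\binom{2m-1}{2i-1}=\frac{(2m-1)!}{(2i-1)!(2m-2i)!}$. The factorials collapse telescopically: $\binom{2m-1}{2i-1}\cdot\frac{(2m-2i)!}{(2m-1)!}\cdot\frac{r!}{(r-2i+1)!}=\frac{r!}{(2i-1)!(r-2i+1)!}=\binom{r}{2i-1}$, so
\[\binom{2m-1}{2i-1}B_{2m-2i}^{(2m)}(m)=\sum_{r=2i-1}^{2m-1}\binom{r}{2i-1}\,s(2m,r+1)\,m^{r-2i+1}.\]
Substituting this into the identity from Lemma \ref{coeffx} and interchanging the order of the (finite) sums gives precisely the claimed identity.

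The derivation is essentially bookkeeping, so there is no real obstacle beyond carefully aligning the summation range: one must verify that the derivative kills the terms with $r<2i-1$ (which it does, since these are polynomials of degree less than $2i-1$), and that the upper limit $2m-1$ is preserved. Once this is checked, the corollary follows immediately.
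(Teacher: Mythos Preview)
Your proposal is correct and follows essentially the same approach as the paper: start from Lemma~\ref{coeffx}, use Corollary~\ref{Dprod} together with the Stirling-number expansion $\prod_{j=1}^{2m-1}(x-j)=\sum_{r=0}^{2m-1}s(2m,r+1)x^{r}$ to rewrite $B_{2m-2i}^{(2m)}(m)$, and then observe that the factor $\binom{2m-1}{2i-1}$ combines with the factorials to yield $\binom{r}{2i-1}$. Your explicit factorial bookkeeping is, if anything, slightly cleaner than the paper's presentation.
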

\begin{proof}
From Corollary \ref{Dprod}
\begin{align}
\sum_{i=k+1}^m& \binom{2m-1}{2i-1}\biggl(\binom{2i}{2k}-m\binom{2i-1}{2k}\biggr)B_{2m-2i}^{(2m)}(m)\\\notag
\end{align}
$$=\sum_{i=k+1}^m \binom{2m-1}{2i-1}\biggl(\binom{2i}{2k}-m\binom{2i-1}{2k}\biggr)\frac{(2m-2i)!}{(2m-1)!}\sum_{i=k+1}^m\frac{r!}{(r-2i+1)!}s(2m,r+1) m^{r-2i+1}$$
$$=\sum_{i=k+1}^m \sum_{r=2i-1}^{2m-1}\binom{2m-1}{2i-1}\biggl(\binom{2i}{2k}-m\binom{2i-1}{2k}\biggr)\frac{1}{\binom{2m-1}{2i-1}}\binom{r}{2i-1}s(2m,r+1) m^{r-2i+1}$$
$$=\sum_{i=k+1}^m \sum_{r=2i-1}^{2m-1}\binom{r}{2i-1}\biggl(\binom{2i}{2k}-m\binom{2i-1}{2k}\biggr)s(2m,r+1) m^{r-2i+1}$$
\end{proof}
\begin{corollary}
$(x-m)^2$ is a factor of
\begin{align}
p(x)&=\sum_{i=k+1}^m \binom{2m-1}{2i-1}\biggl(\binom{2i}{2k}-m\binom{2i-1}{2k}\biggr)B_{2m-2i}^{(2m)}(x)\\\notag
&=\sum_{i=k+1}^m\sum_{r=2i-1}^{2m-1} \binom{r}{2i-1}\biggl(\binom{2i}{2k}-m\binom{2i-1}{2k}\biggr)s(2m,r+1)x^{r-2i+1}\\\notag
\end{align}
\end{corollary}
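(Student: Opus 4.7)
The plan is to show that $p(m) = 0$ and $p'(m) = 0$; these two conditions together guarantee that $(x-m)^2$ divides $p(x)$ (since $p$ is a polynomial in $x$, with coefficients independent of $x$ in each term).

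The vanishing $p(m) = 0$ is immediate from Lemma \ref{coeffx}, which is precisely the statement
\[
\sum_{i=k+1}^m \binom{2m-1}{2i-1}\biggl(\binom{2i}{2k}-m\binom{2i-1}{2k}\biggr)B_{2m-2i}^{(2m)}(m)=0.
\]
So one linear factor of $(x-m)$ is handed to us directly. No additional work is needed here.

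For $p'(m) = 0$, I would differentiate $p(x)$ term-by-term using Proposition \ref{derivB}, which gives
\[
p'(x) = \sum_{i=k+1}^{m-1} (2m-2i)\binom{2m-1}{2i-1}\biggl(\binom{2i}{2k}-m\binom{2i-1}{2k}\biggr)B_{2m-2i-1}^{(2m)}(x),
\]
where the $i=m$ term drops out because $B_0^{(2m)}(x)=1$ is constant. For every remaining index $k+1\le i \le m-1$, the degree $2m-2i-1$ is a positive odd integer. Applying Proposition \ref{oddZero} with parameter $\alpha=2m$ and evaluation point $\alpha/2=m$, each factor $B_{2m-2i-1}^{(2m)}(m)$ vanishes. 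Hence every summand in $p'(m)$ is zero, giving $p'(m)=0$.

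Combining the two vanishings, $(x-m)^2$ divides the first representation of $p(x)$. The equivalence of the two forms given in the statement follows from Corollary \ref{Dprod} (substituted into the sum and rewritten as in Corollary \ref{stir}), so the same conclusion applies to the Stirling-number form as well. There is no real obstacle here; the only mild subtlety is checking the edge case $i=m$ in the derivative (handled by $B_0^{(2m)}$ being constant) and ensuring that the exponent $2m-2i-1$ is positive and odd for the remaining indices so that Proposition \ref{oddZero} truly applies.
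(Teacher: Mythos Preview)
Your proof is correct and matches the paper's own argument essentially step for step: the paper also cites Lemma~\ref{coeffx} for $p(m)=0$, differentiates via Proposition~\ref{derivB}, and then invokes Proposition~\ref{oddZero} to kill every $B_{2m-2i-1}^{(2m)}(m)$ term. Your added remarks on the $i=m$ edge case and on the equivalence of the two displayed forms of $p(x)$ are slightly more explicit than the paper but add nothing new.
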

\begin{proof}
We know $(x-m)$ is a factor of $p(x)$. From Corollary \ref{derivB}, 
\begin{align}
\frac{  \mathrm{  d  }   }{\mathrm{d}x  } p(x)&=\sum_{i=k+1}^m \binom{2m-1}{2i-1}\biggl(\binom{2i}{2k}-m\binom{2i-1}{2k}\biggr)(2m-2i)B_{2m-2i-1}^{(2m)}(x)\\\notag
\end{align}
But from Proposition \ref{oddZero}, $B_{2m-2i-1}^{(2m)}(m)$ are all zero, so $(x-m)$ is a factor of the derivative of $p(x)$.
\end{proof}
\begin{lemma}\label{prepprep}
\begin{align}
&\sum_{i=0}^{2m-1}\frac{1}{x^i}\binom{2m-1}{i}B_{2m-i-1}^{(2m)}(m)\left(\frac{1}{x}E_{i+1}(jx)-mE_i(jx)\right)\\\notag
&=\sum_{k=0}^{m-1}E_{2k+1}(0)\left(\binom{2m-1}{2k}B_{2m-2k-1}^{(2m)}(m+j)+(j-m)\binom{2m-1}{2k+1}B_{2m-2k-2}^{(2m)}(m+j)\right)x^{-2k-1}\\\notag
\end{align}
Note from Prop. \ref{oddZero}, the first sum equals
\begin{align}
&\sum_{i=0}^{m-1}\frac{1}{x^{2i+1}}\binom{2m-1}{2i+1}B_{2m-2i-2}^{(2m)}(m)\left(\frac{1}{x}E_{2i+2}(jx)-mE_{2i+1}(jx)\right)\\\notag
\end{align}
\end{lemma}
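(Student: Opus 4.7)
The plan is to prove the identity by expanding both sides as Laurent expressions in $x$ with polynomial coefficients in $j$, and matching them monomial by monomial. First, by Proposition \ref{oddZero}, the Bernoulli value $B_{2m-i-1}^{(2m)}(m)$ vanishes whenever $2m-i-1$ is odd, i.e.\ whenever $i$ is even, so the left-hand side collapses immediately to the simplified form noted in the lemma, a sum over odd $i$ only.

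Next, I would expand each Euler polynomial $E_n(jx)$ via Proposition \ref{xplusy} at the base point $0$,
\begin{align*}
E_n(jx)=\sum_{r=0}^{n}\binom{n}{r}E_r(0)(jx)^{n-r},
\end{align*}
invoking $E_{2r}(0)=0$ for $r>0$ together with $E_0(0)=1$ (from the introductory proposition) so that only $r=0$ and odd $r$ survive. After absorbing the $1/x^{\,2i+1}$ and $1/x$ prefactors, every LHS term becomes a clean monomial $j^{p}x^{-q}$, carrying an $E_{2l+1}(0)$ factor whenever $q=2l+1$. On the right-hand side, I would likewise expand $B_{2m-2k-1}^{(2m)}(m+j)$ and $B_{2m-2k-2}^{(2m)}(m+j)$ by Proposition \ref{xplusy}, and drop the odd-indexed generalized Bernoulli contributions by Proposition \ref{oddZero}, so the RHS becomes an explicit polynomial in $j$ times $x^{-(2l+1)}$ for each $l=0,\dots,m-1$.

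Finally, I would match coefficients of each monomial $j^{p}x^{-(2l+1)}$. A single reshuffle using the elementary identity $\binom{n}{k}\binom{k}{j}=\binom{n}{j}\binom{n-j}{k-j}$ reduces the required equality to the vanishing sum
\begin{align*}
\sum_{i=k+1}^m \binom{2m-1}{2i-1}\Bigl(\binom{2i}{2k}-m\binom{2i-1}{2k}\Bigr)B_{2m-2i}^{(2m)}(m)=0
\end{align*}
established in Lemma \ref{coeffx}. The main obstacle is precisely this coefficient-matching step: after expansion the left-hand side is a triple sum (over the reduced outer index, over the Euler-expansion index $r$, and over the resulting powers of $j$), and re-indexing it so that the combination $\binom{2i}{2k}-m\binom{2i-1}{2k}$ required by Lemma \ref{coeffx} is exposed in the right place demands careful bookkeeping. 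Beyond that, the argument is formal manipulation supported by Propositions \ref{xplusy}, \ref{oddZero}, and \ref{derivB}.
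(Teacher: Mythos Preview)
Your expansion strategy is sound and close to the paper's, but your final step is misdirected: the coefficient match does \emph{not} reduce to Lemma~\ref{coeffx}. After the binomial reshuffle you propose, the coefficients of each $j^{p}x^{-(2l+1)}$ on the two sides agree \emph{directly}. For fixed $l$ and $s$ (with $i=2s+1$ on the left), the LHS produces the factors $\binom{2m-1}{2s+1}\binom{2s+2}{2l+1}$ and $\binom{2m-1}{2s+1}\binom{2s+1}{2l+1}$ attached to $B_{2m-2s-2}^{(2m)}(m)$; expanding the RHS and applying $\binom{n}{k}\binom{k}{j}=\binom{n}{j}\binom{n-j}{k-j}$ together with Pascal's rule yields exactly the same factors. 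No vanishing sum of the Lemma~\ref{coeffx} type is needed---that lemma is invoked only in Lemma~\ref{lastterm}, where one must show certain coefficients \emph{vanish}, whereas here they simply match. The paper's own proof is a one-sided variant of your plan: it expands only the LHS, then repeatedly recognises sums $\sum_a\binom{N}{a}B_a^{(2m)}(m)j^{N-a}=B_N^{(2m)}(m+j)$ via Proposition~\ref{xplusy}, assembling the RHS directly rather than expanding it too; this sidesteps the triple-sum bookkeeping you flagged.

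A smaller point your expansion would expose: the $r=0$ terms in the Euler expansion leave an $x^{0}$ contribution $(j-m)B_{2m-1}^{(2m)}(m+j)$ on the LHS that is absent from the stated RHS. The paper's proof discards this by appeal to Proposition~\ref{oddZero}, but that proposition only gives vanishing at argument $m$, not $m+j$; this is a wrinkle in the lemma's formulation (note the ``equivalently'' form in Theorem~\ref{last} retains exactly this term) rather than a flaw in your method.
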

\begin{proof}
\begin{align}
&\sum_{i=0}^{2m-1}\frac{1}{x^i}\binom{2m-1}{i}B_{2m-i-1}^{(2m)}(m)\left(\frac{1}{x}E_{i+1}(jx)-mE_i(jx)\right)\\\notag
&=\sum_{i=0}^{2m-1}\frac{1}{x^i}\binom{2m-1}{i}B_{2m-i-1}^{(2m)}(m)\left(\frac{1}{x}\sum_{k=0}^{i+1}E_{k}(0)(jx)^{i+1-k}-m\sum_{k=0}^i\binom{i}{k}E_k(0)(jx)*{i-k}\right)\\\notag
&=\sum_{i=0}^{2m-1}\binom{2m-1}{i}B_{2m-i-1}^{(2m)}(m)\biggl(\biggl (\sum_{k=0}^{i}E_{k}(0)\left(j\binom{i+1}{k}-m\binom{i}{k}j^{i-k}\right)x^{-k}\biggr)+E_{i+1}(0)x^{-i-1}\biggr)\\\notag
&=\sum_{i=0}^{2m-1}\binom{2m-1}{i}B_{2m-i-1}^{(2m)}(m)\sum_{k=0}^{i}E_{k}(0)\left(    \left(j\frac{i+1}{i+1-k}-m\right)\binom{i}{k}\right)j^{i-k}x^{-k}
\end{align}
The end term went away since $E_{i+1}=0$ for $i$ odd, while $B_{2m-i-1}^{(2m)}(m)=0$ for $i$ even.
\begin{align}
&=\sum_{k=0}^{2m-1}\sum_{i=k}^{2m-1}\binom{2m-1}{i}\binom{i}{k}B_{2m-i-1}^{(2m)}(m)E_{k}(0)  \left(j\frac{i+1}{i+1-k}-m\right)j^{i-k}x^{-k}\\\notag
&=\sum_{k=0}^{2m-1}\sum_{i=k}^{2m-1}\binom{2m-1}{k}\binom{2m-1-k}{i-k}B_{2m-i-1}^{(2m)}(m)E_{k}(0) \left(j\frac{i+1}{i+1-k}-m\right)j^{i-k}x^{-k}\\\notag
&=\sum_{k=0}^{2m-1}\binom{2m-1}{k}E_{k}(0)\sum_{i=k}^{2m-1}\binom{2m-1-k}{i-k}B_{2m-i-1}^{(2m)}(m) \left(j\frac{i+1}{i+1-k}-m\right)j^{i-k}x^{-k}\\\notag
\end{align}
Next, separate out the $k=0$ term and change indexing to eliminate terms that equal zero.
\begin{align}\notag
&=\sum_{k=0}^{m-1}\binom{2m-1}{2k+1}E_{2k+1}(0)\sum_{i=2k+1}^{2m-1}\binom{2m-2-2k}{i-k}B_{2m-i-1}^{(2m)}(m)   \left(j\frac{i+1}{i+1-k}-m\right)j^{i-2k-1}x^{-2k-1}\\\notag
&+\sum_{i=0}^{2m-1}\binom{2m-1}{2m-1-i}B_{2m-i-1}^{(2m)}(m)( j-m ) j^i\\\notag
&=\sum_{k=0}^{m-1}\binom{2m-1}{2k+1}E_{2k+1}(0)\sum_{i=2k+1}^{2m-1}\binom{2m-2-2k}{i-k}B_{2m-i-1}^{(2m)}(m) \left(j\frac{i+1}{i+1-k}-m\right)j^{i-2k-1}x^{-2k-1}\\\notag
&+(j-m)\sum_{i=0}^{m-1}\binom{2m-1}{2m-2i-2}B_{2m-2i-2}^{(2m)}(m)j^{2i+1}\\\notag
&=\sum_{k=0}^{m-1}\binom{2m-1}{2k+1}E_{2k+1}(0)\sum_{i=2k+1}^{2m-1}\binom{2m-2-2k}{i-k}B_{2m-i-1}^{(2m)}(m) \left(j\frac{i+1}{i+1-k}-m\right)j^{i-2k-1}x^{-2k-1}\\\notag
&+(j-m)B_{2m-1}^{(2m)}(m+j)\\\notag
\end{align}
The last equivalence is by Prop. \ref{xplusy}., but this last term equals zero by Prop. \ref{oddZero}.
\begin{align}\notag
&=\sum_{k=0}^{m-1}\binom{2m-1}{2k+1}E_{2k+1}(0)\sum_{i=k}^{m-1}\binom{2m-2-2k}{2m-2i-2}B_{2m-2i-2}^{(2m)}(m)   \left(j\frac{2i+2}{2i+1-2k}-m\right)j^{2i-2k}x^{-2k-1}\\\notag
\end{align}
Replace $i$ with $m-i-1$.
\begin{align}\notag
&=\sum_{k=0}^{m-1}\binom{2m-1}{2k+1}E_{2k+1}(0)\sum_{i=0}^{m-k-1}\binom{2m-2-2k}{2i}B_{2i}^{(2m)}(m)   \left(j\frac{2(m-i)}{2(m-i)-1-2k}-m\right)j^{2m-2i-2k-2}x^{-2k-1}\\\notag
\end{align}
Again, parse out a factor and apply Prop. \ref{xplusy}.
\begin{align}\notag
&=\sum_{k=0}^{m-1}\binom{2m-1}{2k+1}E_{2k+1}(0)\sum_{i=0}^{m-k-1}\binom{2m-2-2k}{2i}B_{2i}^{(2m)}(m)  \left(j\frac{2(m-i)}{2(m-i)-1-2k}\right)j^{2m-2i-2k-2}x^{-2k-1}\\\notag
&-m\sum_{k=0}^{m-1}\binom{2m-1}{2k+1}E_{2k+1}(0)B_{2m-2k-2}^{(2m)}(m+j)x^{-2k-1}\\\notag
&=\sum_{k=0}^{m-1}\binom{2m-1}{2k+1}E_{2k+1}(0)\sum_{i=0}^{m-k-1}\binom{2m-2-2k}{2i}B_{2i}^{(2m)}(m)  j\left(1+\frac{1+2k}{2(m-i)-1-2k}\right)j^{2m-2i-2k-2}x^{-2k-1}\\\notag
&-m\sum_{k=0}^{m-1}\binom{2m-1}{2k+1}E_{2k+1}(0)B_{2m-2k-2}^{(2m)}(m+j)x^{-2k-1}\\\notag
&=\sum_{k=0}^{m-1}\binom{2m-1}{2k+1}E_{2k+1}(0)\sum_{i=0}^{m-k-1}\binom{2m-2-2k}{2i}B_{2i}^{(2m)}(m)  j\left(\frac{1+2k}{2(m-i)-1-2k}\right)j^{2m-2i-2k-2}x^{-2k-1}\\\notag
&+j\binom{2m-1}{2k+1}E_{2k+1}(0)B_{2m-2k-2}^{(2m)}(m+j)x^{-2k-1}-m\sum_{k=0}^{m-1}\binom{2m-1}{2k+1}E_{2k+1}(0)B_{2m-2k-2}^{(2m)}(m+j)x^{-2k-1}\\\notag
&=\sum_{k=0}^{m-1}\binom{2m-1}{2k+1}E_{2k+1}(0)\sum_{i=0}^{m-k-1}\left(\frac{1+2k}{2m-1-2k}\right)\binom{2m-1-2k}{2i}B_{2i}^{(2m)}(m)  j^{2m-2i-2k-2}x^{-2k-1}\\\notag
&+j\binom{2m-1}{2k+1}E_{2k+1}(0)B_{2m-2k-2}^{(2m)}(m+j)x^{-2k-1}-m\sum_{k=0}^{m-1}\binom{2m-1}{2k+1}E_{2k+1}(0)B_{2m-2k-2}^{(2m)}(m+j)x^{-2k-1}\\\notag
&=\sum_{k=0}^{m-1}\binom{2m-1}{2k+1}E_{2k+1}(0)\left(\frac{1+2k}{2m-1-2k}\right)B_{2m-2k-1}^{(2m)}(m+j) x^{-2k-1}\\\notag
&+j\binom{2m-1}{2k+1}E_{2k+1}(0)B_{2m-2k-2}^{(2m)}(m+j)x^{-2k-1}-m\sum_{k=0}^{m-1}\binom{2m-1}{2k+1}E_{2k+1}(0)B_{2m-2k-2}^{(2m)}(m+j)x^{-2k-1}\\\notag
&=\sum_{k=0}^{m-1}\binom{2m-1}{2k+1}E_{2k+1}(0)\biggl(\left(\frac{1+2k}{2m-1-2k}\right)B_{2m-2k-1}^{(2m)}(m+j)+(j-m) B_{2m-2k-2}^{(2m)}(m+j)\biggr)x^{-2k-1}\\\notag
&=\sum_{k=0}^{m-1}E_{2k+1}(0)\left(\binom{2m-1}{2k}B_{2m-2k-1}^{(2m)}(m+j)+(j-m)\binom{2m-1}{2k+1}B_{2m-2k-2}^{(2m)}(m+j)\right)x^{-2k-1}\\\notag
\end{align}
\end{proof}
\begin{lemma}\label{middletermprep}
\begin{align}
\sum_{i=0}^{2m-1}&\frac{1}{x^i}\binom{2m-1}{i}B_{2m-1-i}^{(2m)}(m)\biggl(\left(\frac{1}{x}E_{i+1}\left((j-1)x\right)-m E_i\left((j-1)x\right)\right)\\\notag
&-2\left(\frac{1}{x}E_{i+1}\left(j x\right)-m E_i\left(j x\right)\right)+\left(\frac{1}{x}E_{i+1}\left((j+1)x\right)-m E_i\left((j+1)x\right)\right)\biggr)\\\notag\\\notag
&=\sum_{k=0}^{m-2}\binom{2m-1}{2k+1}E_{2k+1}(0)(2m-2k-2)\biggl((1+2k)B_{2m-2k-3}^{(2m-2)}(m+j-1)\\\notag
&\  \  \  \  \  \  +(j-m)(2m-2k-3)B_{2m-2k-4}^{(2m-2)}(m+j-1)+B_{2m-2k-3}^{(2m-1)}(m+j)\\\notag
&\  \  \  \  \  \  +B_{2m-2k-3}^{(2m-1)}(m+j-1)\biggr) x^{-2k-1}\\\notag
\end{align}
\end{lemma}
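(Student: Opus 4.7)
The key observation is that the left-hand side is the symmetric second difference (in the variable $j$) of the expression whose value is already computed in Lemma \ref{prepprep}. Writing $F_m(j,x)$ for the sum $\sum_{i=0}^{2m-1}\frac{1}{x^i}\binom{2m-1}{i}B_{2m-i-1}^{(2m)}(m)(\frac{1}{x}E_{i+1}(jx)-mE_i(jx))$, the LHS of the present lemma is exactly $F_m(j-1,x)-2F_m(j,x)+F_m(j+1,x)$. So my plan is to apply Lemma \ref{prepprep} three times and simplify the resulting second differences using the finite-difference identity for generalized Bernoulli polynomials.

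From Lemma \ref{prepprep} each $F_m(j\pm 1,x)$ and $F_m(j,x)$ is already written as a sum over $k$ of two kinds of terms: $\binom{2m-1}{2k}B_{2m-2k-1}^{(2m)}(m+j)$ and $(j-m)\binom{2m-1}{2k+1}B_{2m-2k-2}^{(2m)}(m+j)$, each multiplied by $E_{2k+1}(0)x^{-2k-1}$. Taking the second difference term by term, I would handle the two families separately. For the first family I use Proposition \ref{alphaMinus} twice in the form $B_n^{(\alpha)}(x+1)-B_n^{(\alpha)}(x)=nB_{n-1}^{(\alpha-1)}(x)$, obtaining the telescoped identity
\begin{align*}
B_{2m-2k-1}^{(2m)}(m+j+1)-2B_{2m-2k-1}^{(2m)}(m+j)+B_{2m-2k-1}^{(2m)}(m+j-1)=(2m-2k-1)(2m-2k-2)B_{2m-2k-3}^{(2m-2)}(m+j-1).
\end{align*}
For the second family I exploit the product structure via the discrete Leibniz rule: if $a=j-m$ and $h(j)=B_{2m-2k-2}^{(2m)}(m+j)$ then $(a+1)h(j+1)-2ah(j)+(a-1)h(j-1)=a(h(j+1)-2h(j)+h(j-1))+(h(j+1)-h(j-1))$. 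The middle piece again reduces by two applications of Proposition \ref{alphaMinus} to $(2m-2k-2)(2m-2k-3)B_{2m-2k-4}^{(2m-2)}(m+j-1)$, while $h(j+1)-h(j-1)$ splits as $(h(j+1)-h(j))+(h(j)-h(j-1))$ and yields $(2m-2k-2)\bigl(B_{2m-2k-3}^{(2m-1)}(m+j)+B_{2m-2k-3}^{(2m-1)}(m+j-1)\bigr)$.

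To merge the two families under a single binomial coefficient I would invoke the identity $(2m-2k-1)\binom{2m-1}{2k}=(2k+1)\binom{2m-1}{2k+1}$ (an immediate consequence of $\binom{n}{k}=\frac{n-k}{k+1}\binom{n}{k+1}\cdot\frac{k+1}{n-k}\ldots$, i.e.\ just expand the factorials). Pulling the common factor $\binom{2m-1}{2k+1}(2m-2k-2)E_{2k+1}(0)x^{-2k-1}$ outside assembles precisely the four-term bracket on the right-hand side of the claimed identity. Finally, observe that the $k=m-1$ contribution carries a factor $(2m-2k-2)=0$, which is why the summation index on the right-hand side runs only through $k=m-2$.

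The main obstacle is bookkeeping: keeping the finite-difference recurrence applied in the correct direction (so that the argument lands on $m+j-1$ rather than $m+j$ or $m+j+1$) and ensuring the Leibniz-style split of the $(j-m)h(j)$ second difference cleanly produces the same structural terms that appear in the stated right-hand side. Apart from that, everything is a mechanical repackaging of Lemma \ref{prepprep} under the operator $\Delta^2$, made possible by the fact that Proposition \ref{alphaMinus} converts each application of $\Delta$ to a pure $B^{(\alpha-1)}$ value, so that two applications yield $B^{(\alpha-2)}$ exactly as required.
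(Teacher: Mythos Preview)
Your proposal is correct and follows essentially the same route as the paper's own proof: apply Lemma~\ref{prepprep} to each of the three $j$-shifts, then reduce the resulting second differences of $B^{(2m)}$ via two applications of Proposition~\ref{alphaMinus}, splitting the $(j-m)B_{2m-2k-2}^{(2m)}$ piece by the discrete Leibniz rule, and finally absorb the $k=m-1$ term via the vanishing factor $(2m-2k-2)$. The paper carries out exactly these steps (writing them out longhand rather than invoking the Leibniz-rule language), so there is no substantive difference in method.
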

\begin{proof}
\begin{align}
\sum_{i=0}^{2m-1}&\frac{1}{x^i}\binom{2m-1}{i}B_{2m-1-i}^{(2m)}(m)\biggl(\left(\frac{1}{x}E_{i+1}\left((j-1)x\right)-m E_i\left((j-1)x\right)\right)\\\notag
&-2\left(\frac{1}{x}E_{i+1}\left(j x\right)-m E_i\left(j x\right)\right)+\left(\frac{1}{x}E_{i+1}\left((j+1)x\right)-m E_i\left((j+1)x\right)\right)\biggr)\\\notag\\\notag
&=\sum_{k=0}^{m-2}\binom{2m-1}{2k+1}E_{2k+1}(0)(2m-2k-2)\biggl((1+2k)B_{2m-2k-3}^{(2m-2)}(m+j-1)\\\notag
&\  \  \  \  \  \  +(j-m)(2m-2k-1)B_{2m-2k-4}^{(2m-2)}(m+j-1)+B_{2m-2k-3}^{(2m-1)}(m+j)\\\notag
&\  \  \  \  \  \  +B_{2m-2k-3}^{(2m-1)}(m+j-1)\biggr) x^{-2k-1}\\\notag
\end{align}

By Lemma \ref{prepprep}, 
\begin{align}
\sum_{i=0}^{2m-1}&\frac{1}{x^i}\binom{2m-1}{i}B_{2m-1-i}^{(2m)}(m)\biggl(\left(\frac{1}{x}E_{i+1}\left((j-1)x\right)-m E_i\left((j-1)x\right)\right)\\\notag
&-2\left(\frac{1}{x}E_{i+1}\left(j x\right)-m E_i\left(j x\right)\right)+\left(\frac{1}{x}E_{i+1}\left((j+1)x\right)-m E_i\left((j+1)x\right)\right)\biggr)\\\notag
&=\sum_{k=0}^{m-1}\binom{2m-1}{2k+1}E_{2k+1}(0)\left(\frac{1+2k}{2m-2k-1}B_{2m-2k-1}^{(2m)}(m+j+1)+(j-m+1)B_{2m-2k-2}^{(2m)}(m+j+1)\right)\\\notag
&+\sum_{k=0}^{m-1}\binom{2m-1}{2k+1}E_{2k+1}(0)\left(\frac{1+2k}{2m-2k-1}B_{2m-2k-1}^{(2m)}(m+j-1)+(j-m-1)B_{2m-2k-2}^{(2m)}(m+j-1)\right)\\\notag
&+2\sum_{k=0}^{m-1}\binom{2m-1}{2k+1}E_{2k+1}(0)\left(\frac{1+2k}{2m-2k-1}B_{2m-2k-1}^{(2m)}(m+j)+(j-m)B_{2m-2k-2}^{(2m)}(m+j)\right)\\\notag
&=\sum_{k=0}^{m-1}\binom{2m-1}{2k+1}E_{2k+1}(0)\biggl(\frac{1+2k}{2m-2k-1}\biggl(B_{2m-2k-1}^{(2m)}(m+j+1)+B_{2m-2k-1}^{(2m)}(m+j-1)\\\notag
&-2B_{2m-2k-1}^{(2m)}(m+j)+((j-m+1)B_{2m-2k-2}^{(2m)}(m+j+1)+(j-m-1)B_{2m-2k-2}^{(2m)}(m+j-1)\\\notag
&-2(j-m)B_{2m-2k-2}^{(2m)}(m+j))\biggr)x^{-2k-1}\\\notag
\end{align}
Next we set up terms so that we can apply Prop. \ref{alphaMinus}.
\begin{align}\notag
&=\sum_{k=0}^{m-1}\binom{2m-1}{2k+1}E_{2k+1}(0)\biggl(\frac{1+2k}{2m-2k-1}\left((B_{2m-2k-1}^{(2m)}(m+j+1)-B_{2m-2k-1}^{(2m)}(m+j)\right)\\\notag
&-\left(B_{2m-2k-1}^{(2m)}(m+j)-B_{2m-2k-1}^{(2m)}(m+j-1)\right)\\\notag
&+(j-m)\left(B_{2m-2k-2}^{(2m)}(m+j+1)-B_{2m-2k-2}^{(2m)}(m+j)\right)\\\notag
&-(j-m)\left(B_{2m-2k-2}^{(2m)}(m+j)-B_{2m-2k-2}^{(2m)}(m+j-1)\right)\\\notag
&+B_{2m-2k-2}^{(2m)}(m+j+1)-B_{2m-2k-2}^{(2m)}(m+j)+B_{2m-2k-2}^{(2m)}(m+j)-B_{2m-2k-2}^{(2m)}(m+j-1)\biggr)x^{-2k-1}\\\notag
\end{align}
Note, the $k=m-1$ term is zero, since $B_0^{(\alpha)}(x))=1$ (apply Prop. \ref{alphaMinus} to first two differences). Apply Prop. \ref{alphaMinus}.
\begin{align}\notag
&=\sum_{k=0}^{m-1}\binom{2m-1}{2k+1}E_{2k+1}(0)\biggl((2m-2k-1)\biggl(\frac{1+2k}{2m-2k-1}(2m-2k-2)\biggl(B_{2m-2k-2}^{(2m-1)}(m+j)\\\notag
&-B_{2m-2k-2}^{(2m-1)}(m+j-1)\biggr)\biggr)+(j-m)(2m-2k-2)\left(B_{2m-2k-3}^{(2m-1)}(m+j)-B_{2m-2k-3}^{(2m-1)}(m+j-1)\right)\\\notag
&+(2m-2k-2)\left(B_{2m-2k-3}^{(2m-1)}(m+j)+B_{2m-2k-3}^{(2m-1)}(m+j-1)\right)\biggr)x^{-2k-1}\\\notag
&=\sum_{k=0}^{m-1}\binom{2m-1}{2k+1}E_{2k+1}(0)\biggl((2m-2k-1)\biggl(\frac{1+2k}{2m-2k-1}(2m-2k-2)\biggl(B_{2m-2k-2}^{(2m-1)}(m+j)\\\notag
&-B_{2m-2k-2}^{(2m-1)}(m+j-1)\biggr)\biggr)+(2m-2k-2)\biggl((j-m)(2m-2k-3)B_{2m-2k-4}^{(2m-2)}(m+j-1)\\\notag
&+B_{2m-2k-3}^{(2m-1)}(m+j)+B_{2m-2k-3}^{(2m-1)}(m+j-1)\biggr)\biggr)x^{-2k-1}\\\notag
&=\sum_{k=0}^{m-2}\binom{2m-1}{2k+1}E_{2k+1}(0)(2m-2k-2)\biggl((1+2k)B_{2m-2k-3}^{(2m-2)}(m+j-1)\\\notag
&\  \  \  \  \  \  +(j-m)(2m-2k-3)B_{2m-2k-4}^{(2m-2)}(m+j-1)+B_{2m-2k-3}^{(2m-1)}(m+j)\\\notag
&\  \  \  \  \  \  +B_{2m-2k-3}^{(2m-1)}(m+j-1)\biggr) x^{-2k-1}\\\notag
\end{align}
\end{proof}
In the following three lemmas the summation index $i$ includes both odd and even numbers in an interval. In fact, the even index $i$ terms are zero, from Proposition \ref{oddZero}, so we could have summed  using $2i-1$ in the formulas and reducing the upper limit of summation, e.g., $\sum_{i=0}^{2m-1}$ becomes $\sum_{i=1}^{m}$.
\begin{lemma}\label{firstterm}
\begin{align}
\sum_{i=0}^{2m-1}&x^{2m-i-1}\binom{2m-1}{i}B_{2m-1-i}^{(2m)}(m)\left(\frac{1}{x}E_{i+1}(x)-m E_i(x)\right)\\\notag
&=m\sum_{i=0}^{2m-1}x^{2m-i-1}\binom{2m-1}{i}B_{2m-1-i}^{(2m)}(m) E_i(0)\\\notag
\end{align}
\end{lemma}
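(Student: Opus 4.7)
The plan is to reduce this identity to Lemma~\ref{prepprep} (specialized to $j=1$) together with the second equation of Lemma~\ref{reduced}. The LHS of Lemma~\ref{firstterm} has the same summand as Lemma~\ref{prepprep} with $j=1$ except that the variable factor is $x^{2m-i-1}$ instead of $1/x^i$; the two differ by the single global factor $x^{2m-1}$. First I would pull out this factor and invoke Lemma~\ref{prepprep} to rewrite the LHS as
\begin{align*}
\sum_{k=0}^{m-1} E_{2k+1}(0)\left(\binom{2m-1}{2k} B_{2m-2k-1}^{(2m)}(m+1) + (1-m)\binom{2m-1}{2k+1} B_{2m-2k-2}^{(2m)}(m+1)\right) x^{2m-2k-2}.
\end{align*}

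Next I would simplify the RHS. In the product $B_{2m-1-i}^{(2m)}(m) E_i(0)$ the factor $B_{2m-1-i}^{(2m)}(m)$ vanishes whenever $2m-1-i$ is odd (by Prop.~\ref{oddZero}), while $E_i(0)$ vanishes for even $i>0$, and the $i=0$ term also dies because $B_{2m-1}^{(2m)}(m)=0$. Only odd $i=2k+1$ survives, giving
\begin{align*}
m\sum_{k=0}^{m-1} \binom{2m-1}{2k+1} B_{2m-2k-2}^{(2m)}(m)\, E_{2k+1}(0)\, x^{2m-2k-2}.
\end{align*}
Matching coefficients of $x^{2m-2k-2}$ (and cancelling the generically nonzero $E_{2k+1}(0)$) reduces the lemma to
\begin{align*}
\binom{2m-1}{2k} B_{2m-2k-1}^{(2m)}(m+1) + (1-m)\binom{2m-1}{2k+1} B_{2m-2k-2}^{(2m)}(m+1) = m\binom{2m-1}{2k+1} B_{2m-2k-2}^{(2m)}(m).
\end{align*}

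To finish, I would use Prop.~\ref{alphaMinus} to shift the arguments from $m+1$ back to $m$, which, combined with $B_{2m-2k-1}^{(2m)}(m)=0$ from Prop.~\ref{oddZero}, yields
\begin{align*}
B_{2m-2k-1}^{(2m)}(m+1) &= (2m-2k-1)\, B_{2m-2k-2}^{(2m-1)}(m),\\
B_{2m-2k-2}^{(2m)}(m+1) &= B_{2m-2k-2}^{(2m)}(m) + (2m-2k-2)\, B_{2m-2k-3}^{(2m-1)}(m).
\end{align*}
After substituting and using the elementary identity $(2m-2k-1)\binom{2m-1}{2k} = (2k+1)\binom{2m-1}{2k+1}$, the claim collapses to
\begin{align*}
(2k+1)\, B_{2m-2k-2}^{(2m-1)}(m) - (m-1)(2m-2k-2)\, B_{2m-2k-3}^{(2m-1)}(m) = (2m-1)\, B_{2m-2k-2}^{(2m)}(m),
\end{align*}
which is precisely the second identity of Lemma~\ref{reduced} with $2j = 2m-2k-2$.

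The main obstacle is not conceptual but bookkeeping: one must carefully track the parity of indices when applying Prop.~\ref{oddZero}, the shifts between the superscripts $(2m)$ and $(2m-1)$ under Prop.~\ref{alphaMinus}, and the re-indexing between the $i$-variable in the original sum and the $k$-variable indexing the surviving odd terms. Once the LHS has been rewritten via Lemma~\ref{prepprep}, every subsequent step is mechanical and the final reduction lands exactly on an equation that Lemma~\ref{reduced} already supplies.
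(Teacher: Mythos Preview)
Your proof is correct and takes a genuinely shorter route than the paper's. The paper proves Lemma~\ref{firstterm} by a direct expansion: it writes each $E_{i+1}(x)$ and $E_i(x)$ via Proposition~\ref{xplusy} as $\sum_k \binom{i}{k}E_k(0)x^{i-k}$, then performs a long chain of binomial rearrangements, index swaps, and applications of Proposition~\ref{alphaMinus} before finally isolating the coefficient of each power of $x$ and reducing it to the second identity of Lemma~\ref{reduced}. You instead observe that the LHS is literally $x^{2m-1}$ times the $j=1$ case of Lemma~\ref{prepprep}, so all of that expansion work has already been done; the rest is just the short argument-shift via Proposition~\ref{alphaMinus} and the same final appeal to Lemma~\ref{reduced}. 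Both proofs land on exactly the same endpoint equation, so your approach is not merely different but a strict economy: you reuse machinery the paper has already built rather than reproving a special case from scratch. One small remark: $E_{2k+1}(0)=\frac{1}{k+1}(1-2^{2k+2})B_{2k+2}$ is genuinely nonzero for every $k\ge 0$, so ``generically'' can be strengthened to ``always'' when you cancel it.
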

\begin{proof}
We begin by re-indexing, as described above.
\begin{align}
\sum_{i=0}^{m-1}&\biggl(x^{2m-2i-2}\binom{2m-1}{2i+1}B_{2m-2i-2}^{(2m)}(m)\biggl(\frac{1}{x}E_{i2+2}(x)\\\notag
&-m E_{2i+1}(x)\biggr)-m x^{2m-2i-2}\binom{2m-1}{2i+1}B_{2m-2i-2}^{(2m)}(m)E_{2i+1}(0)\biggr)\\\notag
&=\sum_{i=0}^{m-1}\biggl(x^{2m-2i-2}\binom{2m-1}{2i+1}B_{2m-2i-2}^{(2m)}(m)\biggl(\frac{1}{x}\sum_{k=0}^{2i+2}\binom{2i+2}{k}E_k(0)x^{2i+2-k}\\\notag
&-m \sum_{k=0}^{2i+1}\binom{2i+1}{k}E_{k}(0)x^{2i+1-k}\biggr)-m x^{2m-2i-2}\binom{2m-1}{2i-1}B_{2m-2i-2}^{(2m)}(m)E_{2i+1}(0)\biggr)\\\notag
&=\sum_{i=0}^{m-1}\biggl(x^{2m-2i-2}\binom{2m-1}{2i+1}B_{2m-2i-2}^{(2m)}(m)\biggl(\frac{1}{x}\sum_{k=0}^{2i+1}\frac{2i+2}{2i+2-k}\binom{2i+1}{k}E_k(0)x^{2i+2-k}\\\notag
&-m \sum_{k=0}^{2i+1}\binom{2i+1}{k}E_{k}(0)x^{2i+1-k}\biggr)-m x^{2m-2i-2}\binom{2m-1}{2i-1}B_{2m-2i-2}^{(2m)}(m)E_{2i+1}(0)\biggr)\\\notag
&=\sum_{i=0}^{m-1}\biggl(x^{2m-2i-2}\binom{2m-1}{2i+1}B_{2m-2i-2}^{(2m)}(m)\biggl(\sum_{k=0}^{2i+1}\biggl(\left(\frac{2i+2}{2i+2-k}-m\right)\binom{2i+1}{k}E_k(0)x^{2i+1-k}\biggr)\\\notag
&-m E_{2i+1}(0)\biggr)\\\notag
&=\sum_{i=0}^{m-1}\biggl(x^{2m-2i-2}\binom{2m-1}{2i+1}B_{2m-2i-2}^{(2m)}(m)\biggl(\sum_{k=0}^{2i+1}\biggl(\left(\frac{2i+2}{2i+2-k}-m\right)\binom{2i+1}{k}E_k(0)x^{2i+1-k}\biggr)\\\notag
&-m E_{2i+1}(0)\biggr)\\\notag
&=\sum_{i=0}^{m-1}\biggl(x^{2m-2i-2}B_{2m-2i-2}^{(2m)}(m)\biggl(\sum_{k=0}^{2i+1}\biggl(\left(\frac{2i+2}{2i+2-k}-m\right)\binom{2m-1}{2i+1}\binom{2i+1}{k}E_k(0)x^{2i+1-k}\biggr)\\\notag
&-m\binom{2m-1}{2i+1} E_{2i+1}(0)\biggr)\\\notag
&=\sum_{i=0}^{m-1}\biggl(x^{2m-2i-2}B_{2m-2i-2}^{(2m)}(m)\biggl(\sum_{k=0}^{2i+1}\biggl(\left(\frac{2i+2}{2i+2-k}-m\right)\binom{2m-1}{k}\binom{2m-k-1}{2i+1-k}E_k(0)x^{2i+1-k}\biggr)\\\notag
&-m\binom{2m-1}{2i+1} E_{2i+1}(0)\biggr)\\\notag
&=\sum_{i=0}^{m-1}\biggl(x^{2m-2i-2}\biggl(\sum_{k=0}^{2i+1}\biggl(\left(\frac{2i+2}{2i+2-k}-m\right)\binom{2m-k-1}{2m-2i-2}B_{2m-2i-2}^{(2m)}(m)\binom{2m-1}{k}E_k(0)x^{2m-k-1}\biggr)\\\notag
&-m \binom{2m-1}{2i+1}B_{2m-2i-2}^{(2m)}(m)E_{2i+1}(0)x^{2m-2i-2}\biggr)\\\notag
&=\sum_{i=0}^{m-1}\biggl(x^{2m-2i-2}\biggl(\sum_{k=0}^{i}\biggl(\left(\frac{2i+2}{2i+1-2k}-m\right)\binom{2m-2k-2}{2m-2i-2}B_{2m-2i-2}^{(2m)}(m)\binom{2m-1}{2k+1}E_{2k+1}(0)x^{2m-2k-2}\biggr)\\\notag
&-m \binom{2m-1}{2i+1}B_{2m-2i-2}^{(2m)}(m)E_{2i+1}(0)x^{2m-2i-2}\biggr)\\\notag
\end{align}
Repalce $k$ by $m-1-k$ and then $i$ by $m-1-(k-i)$, this becomes
\begin{align}
&=\sum_{k=0}^{m-1}\sum_{i=0}^{k}\biggl(\left(\frac{2(m-k+i)}{2i+1}-m\right)\binom{2k}{2(k-i)}B_{2(k-i)}^{(2m)}(m)\binom{2m-1}{2(m-k)-1}E_{2(m-k)-1}(0)x^{2k}\biggr)\\\notag
&-m\sum_{k=0}^{m-1} \binom{2m-1}{2(m-k)-1}B_{2k}^{(2m)}(m)E_{2(m-k)-1}(0)x^{2k}\\\notag
\end{align}

This will be zero iff every coefficient of a power of $x$ is zero. Hence the problem is reduced to showing the following expression equals zero.
\begin{align}\notag
&\sum_{i=0}^{k}\biggl(\left(\frac{2(m-k+i)}{2i+1}-m\right)\binom{2k}{2(k-i)}B_{2(k-i)}^{(2m)}(m)\biggr)
-mB_{2k}^{(2m)}(m)\\\notag
\end{align}
Let's replace $i$ with $k-i$ and let $j=m-k$. This becomes
\begin{align}\notag
 \sum_{i=0}^{m-j}&\biggl(\left(\frac{2(m-i)}{2(m-j-i)+1}-m\right)\binom{2m-2j}{2i}B_{2i}^{(2m)}(m)\biggr)-m B_{2m-2j}^{(2m)}(m)\\\notag
 &=\sum_{i=0}^{m-j}\biggl(\left(\frac{2(m-i)}{2(m-j-i)+1}\right)\binom{2m-2j}{2i}B_{2i}^{(2m)}(m)\biggr)\\\notag
&-m \sum_{i=0}^{m-j}\biggl(\binom{2m-2j}{2i}B_{2i}^{(2m)}(m)\biggr)-m B_{2m-2j}^{(2m)}(m)\\\notag
 &=\sum_{i=0}^{m-j}\biggl(\left(\frac{2(m-i)}{2(m-j-i)+1}\right)\binom{2m-2j}{2i}B_{2i}^{(2m)}(m)\biggr)\\\notag
&-m \left(\sum_{i=0}^{m-j-1}\biggl(\binom{2m-2j}{2i}B_{2i}^{(2m)}(m)\biggr)+B_{2m-2j}^{(2m)}(m)\right)-m B_{2m-2j}^{(2m)}(m)\\\notag
 &=\sum_{i=0}^{m-j}\biggl(\left(\frac{2(m-i)}{2(m-j-i)+1}\right)\binom{2m-2j}{2i}B_{2i}^{(2m)}(m)\biggr)\\\notag
&-m \left((2m-2j)B_{2m-2j-1}^{(2m-1)}(m)+B_{2m-2j}^{(2m)}(m)\right)-m B_{2m-2j}^{(2m)}(m)\\\notag
&=\sum_{i=0}^{m-j}\biggl(\left(1+\frac{2j-1}{2(m-j-i)+1}\right)\binom{2m-2j}{2i}B_{2i}^{(2m)}(m)\biggr)\\\notag
&-m \left((2m-2j)B_{2m-2j-1}^{(2m-1)}(m)+B_{2m-2j}^{(2m)}(m)\right)-m B_{2m-2j}^{(2m)}(m)\\\notag
&=\sum_{i=0}^{m-j}\biggl(\binom{2m-2j}{2i}B_{2i}^{(2m)}(m)\biggr)+\sum_{i=0}^{m-j}\biggl(\left(\frac{2j-1}{2(m-j)+1}\right)\binom{2m-2j+1}{2i}B_{2i}^{(2m)}(m)\biggr)\\\notag
&-m \left((2m-2j)B_{2m-2j-1}^{(2m-1)}(m)+B_{2m-2j}^{(2m)}(m)\right)-m B_{2m-2j}^{(2m)}(m)\\\notag
&=\sum_{i=0}^{m-j}\biggl(\binom{2m-2j}{2i}B_{2i}^{(2m)}(m)\biggr)+(2j-1)B_{2m-2j}^{(2m-1)}(m)\\\notag
&-m \left((2m-2j)B_{2m-2j-1}^{(2m-1)}(m)+B_{2m-2j}^{(2m)}(m)\right)-m B_{2m-2j}^{(2m)}(m)\\\notag
&=\sum_{i=0}^{m-j-1}\binom{2m-2j}{2i}B_{2i}^{(2m)}(m)+B_{2m-2j}^{(2m)}(m)+(2j-1)B_{2m-2j}^{(2m-1)}(m)\\\notag
&-m \left((2m-2j)B_{2m-2j-1}^{(2m-1)}(m)+B_{2m-2j}^{(2m)}(m)\right)-m B_{2m-2j}^{(2m)}(m)\\\notag
&=(2m-2j)B_{2m-2j-1}^{(2m-1)}(m)+B_{2m-2j}^{(2m)}(m)+(2j-1)B_{2m-2j}^{(2m-1)}(m)\\\notag
&-m \left((2m-2j)B_{2m-2j-1}^{(2m-1)}(m)+B_{2m-2j}^{(2m)}(m)\right)-m B_{2m-2j}^{(2m)}(m)\\\notag
&=(2j-1)B_{2m-2j}^{(2m-1)}(m)-(m-1)(2m-2j)B_{2m-2j-1}^{(2m-1)}(m)-(2m-1)B_{2m-2j}^{(2m)}(m)\\\notag
\end{align}
This last expression is the subject of Lemma \ref{reduced}.
\end{proof}
\begin{lemma}\label{lastterm}
\begin{align}
\sum_{i=0}^{2m-1}&x^{2m-i}\binom{2m-1}{i}B_{2m-1-i}^{(2m)}(m)\left(\frac{1}{x}E_{i+1}\left(\frac{1}{2}\right)-m E_i\left(\frac{1}{2}\right)\right)\\\notag
&=\sum_{i=0}^{2m-1}x^{2m-i}\binom{2m-1}{i}B_{2m-1-i}^{(2m)}(m) \left(\frac{1}{x}E_{i+1}\left(x+\frac{1}{2}\right)-m E_i\left(x+\frac{1}{2}\right)\right)\\\notag
\end{align}
\end{lemma}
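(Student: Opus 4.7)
My plan is to show that RHS minus LHS is identically zero as a polynomial in $x$; call this difference $\Delta(x)$, so
$$
\Delta(x)=\sum_{i=0}^{2m-1} x^{2m-i}\binom{2m-1}{i}B_{2m-1-i}^{(2m)}(m)\left[\frac{1}{x}\bigl(E_{i+1}(x+\tfrac12)-E_{i+1}(\tfrac12)\bigr)-m\bigl(E_i(x+\tfrac12)-E_i(\tfrac12)\bigr)\right].
$$

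First I would apply Proposition \ref{xplusy} with expansion point $\tfrac12$ and shift $x$, giving the finite difference $E_k(x+\tfrac12)-E_k(\tfrac12)=\sum_{l=0}^{k-1}\binom{k}{l}E_l(\tfrac12)x^{k-l}$. Substituting these expansions and using $x^{2m-i}\cdot x^{i-l}=x^{2m-l}$, every monomial in the $l$-th summand carries exponent $2m-l$, \emph{independent of $i$}. Interchanging the sums over $i$ and $l$ therefore writes $\Delta(x)=\sum_{l=0}^{2m-1}x^{2m-l}c_l$ with
$$
c_l=E_l(\tfrac12)\left[\sum_{i=l}^{2m-1}\binom{2m-1}{i}B_{2m-1-i}^{(2m)}(m)\binom{i+1}{l}-m\sum_{i=l+1}^{2m-1}\binom{2m-1}{i}B_{2m-1-i}^{(2m)}(m)\binom{i}{l}\right].
$$

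Next I would exploit two parity-vanishing facts to kill most terms. Because $E(2k+1)=0$ together with $E(k)=2^kE_k(\tfrac12)$ gives $E_l(\tfrac12)=0$ for odd $l$, the prefactor $E_l(\tfrac12)$ forces $l=2k$; and by Proposition \ref{oddZero}, $B_{2m-1-i}^{(2m)}(m)=0$ unless $i$ is odd, so I would substitute $i=2r-1$. For $0\le k\le m-1$ the surviving range in both inner sums becomes $r=k+1,\dots,m$, so the two sums merge and $c_{2k}$ simplifies to $E_{2k}(\tfrac12)$ times
$$
\sum_{r=k+1}^{m}\binom{2m-1}{2r-1}\Bigl[\binom{2r}{2k}-m\binom{2r-1}{2k}\Bigr]B_{2m-2r}^{(2m)}(m),
$$
which is precisely the expression proved to vanish in Lemma \ref{coeffx} (matching their summation index $i$ to our $r$). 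Hence $c_{2k}=0$ for every $0\le k\le m-1$, and $\Delta(x)\equiv 0$.

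The hard part will be the bookkeeping in the reorganisation step: carefully handling the different upper limits $l\le i$ versus $l\le i-1$ in the two inner sums so that after the parity substitutions they can be uniformly combined, and then matching the resulting bracket with the statement of Lemma \ref{coeffx}. Once this identification is in place the proof is essentially complete, since Lemma \ref{coeffx} has already absorbed the genuine combinatorial identity (itself a consequence of Lemma \ref{reduced} applied to the generating function $(t/(e^t-1))^{2m-1}e^{mt}$).
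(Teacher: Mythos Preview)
Your proposal is correct and follows essentially the same route as the paper: expand $E_{i+1}(x+\tfrac12)$ and $E_i(x+\tfrac12)$ about $\tfrac12$ via Proposition~\ref{xplusy}, use the vanishing of $E_l(\tfrac12)$ for odd $l$ and of $B_{2m-1-i}^{(2m)}(m)$ for even $i$ (Proposition~\ref{oddZero}) to restrict to $l=2k$, $i=2r-1$, and then identify the surviving coefficient with the expression of Lemma~\ref{coeffx}. The only cosmetic difference is that the paper first re-indexes to odd $i$ and separates out the $k=i$ top term (recovering the LHS explicitly) before examining the remainder, whereas you form $\Delta=\mathrm{RHS}-\mathrm{LHS}$ directly; the reductions and the decisive appeal to Lemma~\ref{coeffx} are identical.
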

\begin{proof}
As mentioned above, this is equivalent to showing
\begin{align}
\sum_{i=1}^{m}&x^{2m-2i+1}\binom{2m-1}{2i-1}B_{2m-2i}^{(2m)}(m)\left(\frac{1}{x}E_{2i}\left(\frac{1}{2}\right)-m E_{2i-1}\left(\frac{1}{2}\right)\right)\\\notag
&=\sum_{i=1}^{m}x^{2m-2i+1}\binom{2m-1}{2i-1}B_{2m-2i}^{(2m)}(m) \left(\frac{1}{x}E_{2i}\left(x+\frac{1}{2}\right)-m E_{2i-1}\left(x+\frac{1}{2}\right)\right)\\\notag
\end{align}
\begin{align}
\sum_{i=1}^{m}&x^{2m-2i+1}\binom{2m-1}{2i-1}B_{2m-2i}^{(2m)}(m)\left(\frac{1}{x}E_{2i}\left(\frac{1}{2}\right)-m E_{2i-1}\left(\frac{1}{2}\right)\right)\\\notag
&=\sum_{i=1}^{m}\frac{1}{2^{2i}}x^{2m-2i}\binom{2m-1}{2i-1}B_{2m-2i}^{(2m)}(m)E(2i)\\\notag
\end{align}
while
\begin{align}
\sum_{i=1}^{m}&x^{2m-2i+1}\binom{2m-1}{2i-1}B_{2m-2i}^{(2m)}(m) \left(\frac{1}{x}E_{2i}\left(x+\frac{1}{2}\right)-m E_{2i-1}\left(x+\frac{1}{2}\right)\right)\\\notag
&=\sum_{i=1}^{m}x^{2m-2i+1}\binom{2m-1}{2i-1}B_{2m-2i}^{(2m)}(m) \biggl(\frac{1}{x}\sum_{k=0}^{2i}\binom{2i}{k}\frac{1}{2^k}E(k)x^{2i-k}\\\notag
&-m \sum_{k=0}^{2i-2}\binom{2i-1}{k}\frac{1}{2^k}E(k)x^{2i-k-1}\biggr)\\\notag
&=\sum_{i=1}^{m}x^{2m-2i+1}\binom{2m-1}{2i-1}B_{2m-2i}^{(2m)}(m) \biggl(\frac{1}{x}\biggl(\sum_{k=0}^{i}\binom{2i}{2k}\frac{1}{2^{2k}}E(2k)x^{2i-2k}\\\notag
&+\frac{1}{2^{2i}}E(2i)\biggr)-m \sum_{k=0}^{i-1}\binom{2i-1}{2k}\frac{1}{2^{2k}}E(2k)x^{2i-2k-1}\biggr)\\\notag
&=\sum_{i=1}^{m}\frac{1}{2^{2i}}x^{2m-2i}\binom{2m-1}{2i-1}B_{2m-2i}^{(2m)}(m)E(2i)\\\notag
&+\sum_{i=1}^{m}x^{2m-2i+1}\binom{2m-1}{2i-1}B_{2m-2i}^{(2m)}(m) \biggl(\frac{1}{x}\sum_{k=0}^{i-1}\binom{2i}{2k}\frac{1}{2^{2k}}E(2k)x^{2i-2k}\\\notag
&-m \sum_{k=0}^{i-1}\binom{2i-1}{2k}\frac{1}{2^{2k}}E(2k)x^{2i-2k-1}\biggr)\\\notag
\end{align}
So, the task is to show that the second sum (from $1$ to $m$) is zero. This in turn is equivalent to showing that the coefficients of powers of $x$ are zero. Looking at the coefficient of $x^{2m-2k-1}$, this amounts to showing
\begin{align}
\sum_{i=k+1}^m \binom{2m-1}{2i-1}\biggl(\binom{2i}{2k}-m\binom{2i-1}{2k}\biggr)B_{2m-2i}^{(2m)}(m)=0\\\notag
\end{align}
This is Lemma \ref{coeffx}.
\end{proof}

\begin{lemma}\label{middleterm}
\begin{align}
\sum_{i=0}^{2m-1}&\frac{1}{x^i}\binom{2m-1}{i}B_{2m-1-i}^{(2m)}(m)\biggl(\left(\frac{1}{x}E_{i+1}\left((j-1)x\right)-m E_i\left((j-1)x\right)\right)\\\notag
&-2\left(\frac{1}{x}E_{i+1}\left(j x\right)-m E_i\left(j x\right)\right)+\left(\frac{1}{x}E_{i+1}\left((j+1)x\right)-m E_i\left((j+1)x\right)\right)\biggr)\\\notag\\\notag
&=2m(2m-1)\sum_{i=0}^{2m-3}\frac{1}{x^i}\binom{2m-3}{i}B_{2m-3-i}^{(2m-2)}(m-1) \left(\frac{1}{x}E_{i+1}\left(jx\right)-m E_i\left(jx\right)\right)\\\notag
\end{align}
\end{lemma}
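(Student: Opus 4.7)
The plan is to reduce both sides to a common closed form involving $E_{2k+1}(0)x^{-2k-1}$ coefficients and then match term by term. By Lemma \ref{middletermprep}, the left-hand side already admits an explicit expansion of the form $\sum_{k=0}^{m-2}\binom{2m-1}{2k+1}E_{2k+1}(0)(2m-2k-2)\bigl(\cdots\bigr)x^{-2k-1}$, where the inner parenthesis is a specific combination of $B^{(2m-2)}$ evaluated at $m+j-1$ together with $B^{(2m-1)}(m+j)$ and $B^{(2m-1)}(m+j-1)$. No further work is needed on the LHS beyond recording this expansion.

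The right-hand side, in turn, is essentially $2m(2m-1)$ times an expression of the form appearing in Lemma \ref{prepprep}, with parameter $m$ replaced by $m-1$; however the coefficient $-m$ in the factor $\frac{1}{x}E_{i+1}(jx)-mE_i(jx)$ does not match the $-(m-1)$ that a direct substitution into prepprep would require. To remedy this I would first rewrite $-mE_i(jx)=-(m-1)E_i(jx)-E_i(jx)$, so that Lemma \ref{prepprep} (with $m\mapsto m-1$) handles the first piece and yields a closed expansion in $B^{(2m-2)}$ evaluated at $m-1+j$. The leftover sum involving only $E_i(jx)$ collapses via Proposition \ref{xplusy} (swap sums, apply the Vandermonde identity $\binom{2m-3}{i}\binom{i}{l}=\binom{2m-3}{l}\binom{2m-3-l}{i-l}$, and fold the inner $i$-sum back into a single generalized Bernoulli polynomial) into contributions of the form $\binom{2m-3}{l}E_l(0)B_{2m-3-l}^{(2m-2)}(m-1+j)x^{-l}$.

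Once both sides are cast in this form, the lemma reduces to matching coefficients of $E_{2k+1}(0)x^{-2k-1}$ for each $k\in\{0,\ldots,m-2\}$. The bridge between the two sides is Proposition \ref{alphaMinus}, read in the forward direction as $(n+1)B_n^{(2m-2)}(m+j-1)=B_{n+1}^{(2m-1)}(m+j)-B_{n+1}^{(2m-1)}(m+j-1)$, which converts the $B^{(2m-1)}$ \emph{differences} on the LHS into $B^{(2m-2)}$ evaluations of the same form as on the RHS. Pure binomial bookkeeping then shows that the $B^{(2m-2)}$-coefficients on the two sides agree.

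The main obstacle will be handling the \emph{sum} $B^{(2m-1)}(m+j)+B^{(2m-1)}(m+j-1)$ appearing in the LHS, since Proposition \ref{alphaMinus} only eliminates differences. I would split this sum as $2B^{(2m-1)}(m+j-1)$ plus a difference, dispose of the difference via alphaMinus, and verify that the residual $2B^{(2m-1)}(m+j-1)$ combined with the other $B^{(2m-1)}$-free terms on the LHS contributes zero at every power of $x$ after simplification. This final cancellation is a polynomial identity in generalized Bernoulli polynomials at three different upper indices ($2m$, $2m-1$, $2m-2$) with a common argument, and the recurrences collected in Lemma \ref{reduced} (which interrelate $B^{(2m)}$, $B^{(2m-1)}$, and $B^{(2m-2)}$ via the $t$-derivative of the generating function) should close this step.
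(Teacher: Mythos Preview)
Your plan mirrors the paper's proof: apply Lemma \ref{middletermprep} to the left-hand side, apply Lemma \ref{prepprep} (with $m\mapsto m-1$) to the right-hand side, match coefficients of $E_{2k+1}(0)x^{-2k-1}$, and finish with Lemma \ref{reduced}. The paper carries out exactly this sequence, though its ``simplify'' and ``change variables'' steps are quite terse.

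The one place where you differ is your treatment of the $-m$ versus $-(m-1)$ mismatch on the right-hand side. The paper simply writes down the result of Lemma \ref{prepprep} with the coefficient $(j-m+1)$, as if the inner factor had been $-(m-1)E_i$, and silently absorbs the discrepancy into the subsequent simplification, where the two coefficient expressions are subtracted and reduced directly to the third identity of Lemma \ref{reduced}. Your proposal to split $-m=-(m-1)-1$ and fold the extra $-E_i(jx)$ piece into a separate $B^{(2m-2)}(m-1+j)$ term via Proposition \ref{xplusy} is a cleaner and more honest way to justify the same step, and leads to the same coefficient identity. Likewise, your idea of writing the sum $B^{(2m-1)}(m+j)+B^{(2m-1)}(m+j-1)$ as twice one term plus a difference (to which Proposition \ref{alphaMinus} applies) is more explicit than the paper's direct jump to a form matching Lemma \ref{reduced}, but the endpoint is the same three-term recurrence. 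There is no genuine gap in your outline.
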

\begin{proof}
Using Lemmas \ref{prepprep} and \ref{middletermprep} we can re-write this equation as
\begin{align}
0&=\sum_{k=0}^{m-2}\binom{2m-1}{2k+1}E_{2k+1}(0)(2m-2k-2)\biggl((1+2k)B_{2m-2k-3}^{(2m-2)}(m+j-1)\\\notag
&\  \  \  \  \  \  +(j-m)(2m-2k-3)B_{2m-2k-4}^{(2m-2)}(m+j-1)+B_{2m-2k-3}^{(2m-1)}(m+j)\\\notag
&\  \  \  \  \  \  +B_{2m-2k-3}^{(2m-1)}(m+j-1)\biggr) x^{-2k-1}\\\notag
&-2m(2m-1)\sum_{k=0}^{m-2}E_{2k+1}(0)\biggl(\binom{2m-3}{2k}B_{2m-2k-3}^{(2(m-1))}(m+j-1)\\\notag
&+(j-m+1)\binom{2m-3}{2k+1}B_{2m-2k-4}^{(2(m-1)}(m+j-1)\biggr)x^{-2k-1}\\\notag
\end{align}
Since a polynomial in $x$ is zero iff each coefficient of a power of x$x$ is zero,. we can restrict to the coefficient of $x$ and remove non-zero factors.
\begin{align}
&(2m-2k-2)\biggl((1+2k)B_{2m-2k-3}^{(2m-2)}(m+j-1)\\\notag
&\  \  \  \  \  \  +(j-m)(2m-2k-3)B_{2m-2k-4}^{(2m-2)}(m+j-1)+B_{2m-2k-3}^{(2m-1)}(m+j)\\\notag
&\  \  \  \  \  \  +B_{2m-2k-3}^{(2m-1)}(m+j-1)\biggr) \\\notag
&-\frac{m}{m-1}\biggl((1+2k)B_{2m-2k-3}^{(2(m-1))}(m+j-1)\\\notag
&+(j-m+1)(2m-2k-3)B_{2m-2k-4}^{(2(m-1)}(m+j-1)\\\notag
\end{align}
We simplify to
\begin{align}
&(1+2k)B_{2m-2k-3}^{(2m-2)}(m+j-1) +j(2m-2k-3)B_{2m-2k-4}^{(2m-2)}(m+j-1)\\\notag
&+(m-1)B_{2m-2k-3}^{(2m-1)}(m+j) +B_{2m-2k-3}^{(2m-1)}(m+j-1) \\\notag
\end{align}
We can change variables to simplify more.
\begin{align}
&(2m-(2r+1)))B_{2r+1}^{(2m)}(m+j) +j(2r+1)B_{2r}^{(2m)}(m+j)\\\notag
&-mB_{2r+1}^{(2m+1)}(m+j) +(2r+1)B_{2m-2k}^{(2m)}(m+j) \\\notag
\end{align}

By Lemma \ref{reduced} we are done.
\end{proof}

\begin{lemma}\label{oddeven}
\begin{align}
&\sum_{j=1}^{m}\frac{1}{x^{2j-1}}(2^{2j}-1)\binom{2m}{2j}B_{2m-2j}^{(2m)}(m) B(2 j)\\\notag
&=-\frac{1}{x^{2m}}\sum_{k=0}^{m-1}x^{2k+1}\binom{2m-1}{2k}\Biggl(B_{2k}^{(2m)}(m)\Biggr)\Biggl(\frac{1}{x}E_{2m-2k}\left(x\right)-mE_{2m-2k-1}\left(x\right)\Biggr)\\\notag
\end{align}
\end{lemma}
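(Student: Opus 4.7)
The plan is to match coefficients of $x^{-(2k+1)}$ on both sides after putting each into the form $\sum_{k=0}^{m-1}c_{k}\,E_{2k+1}(0)\,x^{-(2k+1)}$.

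First I would transform the left hand side using the listed identity $E_n(0)=\tfrac{2}{n+1}(1-2^{n+1})B(n+1)$ at $n=2j-1$, which gives $(2^{2j}-1)B(2j)=-j\,E_{2j-1}(0)$. Combined with $\binom{2m}{2j}=\tfrac{m}{j}\binom{2m-1}{2j-1}$ and the substitution $k=j-1$, the LHS becomes
\[
-m\sum_{k=0}^{m-1}\binom{2m-1}{2k+1}B_{2m-2k-2}^{(2m)}(m)\,E_{2k+1}(0)\,x^{-(2k+1)}.
\]

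Next I would recognize the RHS as the $j=1$ specialization of Lemma \ref{prepprep}, up to sign. Under the re-indexing $i=2m-2k-1$, and using $\binom{2m-1}{2k}=\binom{2m-1}{2m-2k-1}$, each RHS summand matches, after multiplication by $x^{2m}$, the summand of Lemma \ref{prepprep} at odd $i$. The even-$i$ terms of that lemma vanish since $B_{2m-1-i}^{(2m)}(m)=0$ by Proposition \ref{oddZero}, so applying Lemma \ref{prepprep} with $j=1$ rewrites the RHS as
\[
-\sum_{k=0}^{m-1}E_{2k+1}(0)\Bigl(\binom{2m-1}{2k}B_{2m-2k-1}^{(2m)}(m+1)+(1-m)\binom{2m-1}{2k+1}B_{2m-2k-2}^{(2m)}(m+1)\Bigr)x^{-(2k+1)}.
\]

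Finally I would equate coefficients of $x^{-(2k+1)}$ and divide by the nonzero $E_{2k+1}(0)$. Expanding $B_{2m-2k-1}^{(2m)}(m+1)$ and $B_{2m-2k-2}^{(2m)}(m+1)$ via Proposition \ref{xplusy}, then collapsing the resulting binomial sums by Proposition \ref{alphaMinus}, yields $(2m-2k-1)B_{2m-2k-2}^{(2m-1)}(m)$ and $(2m-2k-2)B_{2m-2k-3}^{(2m-1)}(m)+B_{2m-2k-2}^{(2m)}(m)$ respectively, the leftover $B_{2m-2k-1}^{(2m)}(m)$ tail being killed by Proposition \ref{oddZero}. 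The elementary identity $(2k+1)\binom{2m-1}{2k+1}=(2m-2k-1)\binom{2m-1}{2k}$ aligns prefactors, and setting $j=m-k-1$ the residual equation becomes
\[
(2m-2j-1)B_{2j}^{(2m-1)}(m)-(2m-1)B_{2j}^{(2m)}(m)-(2j)(m-1)B_{2j-1}^{(2m-1)}(m)=0,
\]
which is exactly the second identity of Lemma \ref{reduced}. The main obstacle is spotting the re-indexing $i=2m-2k-1$ that identifies the RHS as a specialization of Lemma \ref{prepprep}; once that is in hand the remainder is careful binomial bookkeeping using machinery already developed above.
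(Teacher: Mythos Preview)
Your proposal is correct and follows essentially the same route as the paper's own proof: apply Lemma~\ref{prepprep} (at $j=1$) to rewrite the Euler-polynomial side, use the relation $(2^{2j}-1)B(2j)=-j\,E_{2j-1}(0)$ to put both sides over the common factor $E_{2k+1}(0)$, match coefficients of $x^{-(2k+1)}$, expand $B^{(2m)}(m+1)$ via Proposition~\ref{alphaMinus} (your detour through Proposition~\ref{xplusy} is harmless but unnecessary, since the first form of Proposition~\ref{alphaMinus} already gives the needed identity), kill the odd-degree tail with Proposition~\ref{oddZero}, and reduce to the second identity of Lemma~\ref{reduced}. The only cosmetic difference is that you convert the Bernoulli numbers on the left to $E_{2k+1}(0)$ before matching coefficients, whereas the paper matches first and converts $E_{2j-1}(0)$ to Bernoulli numbers afterward.
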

\begin{proof}
By Lemma \ref{prepprep} we can change this into
\begin{align}
0&=\sum_{j=1}^{m}\frac{1}{x^{2j-1}}(2^{2j}-1)\binom{2m}{2j}B_{2m-2j}^{(2m)}(m) B(2 j)\\\notag
&+\sum_{k=0}^{m-1}E_{2m-2k-1}(0)\frac{1}{x^{2m-2k-1}}\biggl(\binom{2m-1}{2k+1}B_{2k+1}^{(2m)}(m+1)+(1-m)\binom{2m-1}{2k}B_{2k}^{(2m)}(m+1)\biggr)\\\notag
&=\sum_{j=1}^{m}\frac{1}{x^{2j-1}}(2^{2j}-1)\binom{2m}{2j}B_{2m-2j}^{(2m)}(m) B(2 j)\\\notag
&+\sum_{j=1}^{m}E_{2j-1}(0)\frac{1}{x^{2j-1}}\biggl(\binom{2m-1}{2m-2j+1}B_{2m-2j+1}^{(2m)}(m+1)+(1-m)\binom{2m-1}{2m-2j}B_{2m-2j}^{(2m)}(m+1)\biggr)\\\notag
&=\sum_{j=1}^{m}\frac{1}{x^{2j-1}}(2^{2j}-1)\binom{2m}{2j}B_{2m-2j}^{(2m)}(m) B(2 j)\\\notag
&+\sum_{j=1}^{m}E_{2j-1}(0)\frac{1}{x^{2j-1}}\biggl(\binom{2m-1}{2m-2j+1}B_{2m-2j+1}^{(2m)}(m+1)+(1-m)\binom{2m-1}{2m-2j}B_{2m-2j}^{(2m)}(m+1)\biggr)\\\notag
\end{align}
We can restrict to the coefficients of a power of $x$.
\begin{align}
0&=(2^{2j}-1)\binom{2m}{2j}B_{2m-2j}^{(2m)}(m) B(2 j)\\\notag
&+E_{2j-1}(0)\biggl(\binom{2m-1}{2m-2j+1}B_{2m-2j+1}^{(2m)}(m+1)+(1-m)\binom{2m-1}{2m-2j}B_{2m-2j}^{(2m)}(m+1)\biggr)\\\notag
&=(2^{2j}-1)\binom{2m}{2j}B_{2m-2j}^{(2m)}(m) B(2 j)\\\notag
&+\frac{1}{j}(1-2^{2j})B(2j)\biggl(\binom{2m-1}{2m-2j+1}B_{2m-2j+1}^{(2m)}(m+1)+(1-m)\binom{2m-1}{2m-2j}B_{2m-2j}^{(2m)}(m+1)\biggr)\\\notag
\end{align}
This reduces to showing
\begin{align}
mB_{2m-2j}^{(2m)}(m) -\frac{2j-1}{2m-2j+1}B_{2m-2j+1}^{(2m)}(m+1)-(1-m)B_{2m-2j}^{(2m)}(m+1)=0\\\notag
\end{align}
We use Prop. \ref{alphaMinus} to change this to
\begin{align}
mB_{2m-2j}^{(2m)}(m)& -\frac{2j-1}{2m-2j+1}\left((2m-2j+1)B_{2m-2j}^{(2m-1)}(m)+B_{2m-2j+1}^{(2m)}(m)\right)\\\notag
&-(1-m)\left((2m-2j)B_{2m-2j-1}^{(2m-1)}(m)+B_{2m-2j}^{(2m)}(m)\right)\\\notag
\end{align}
Note by Prop. \ref{oddZero} the $B_{2m-2j+1}^{(2m)}(m)$ term is zero.  After we collect terms and change variables, this becomes
\begin{align}
(2m-2k-1)B_{2k}^{(2m-1)}(m)+2k(1-m)B_{2k-1}^{(2m-1)}(m)+(1-2m)B_{2k}^{(2m)}(m)\\\notag
\end{align}
By Lemma \ref{reduced} we are done.
\end{proof}
\section{Negative Odd Powers}
\begin{proposition}\label{negodd}
Assume $r\in\mathbb{Z}$, $r$ odd. Regarding the $2^{n-2}x2^{n-2}$ matrices, $M_n$ which satisfy $$\frac{1}{(\sin(\frac{(2j-1)\pi}{2^n}))^r}=2^r \sum_{k=1}^{2^{n-2}}M_n(j,k)\sin(\frac{(2k-1)\pi}{2^n})$$ We summarize some facts. 
\begin{itemize}
\item the $M_n$ are normal matricies
\item the set of $M_n$ for a fixed $n$ and all $r$ commute
\item in a given $M_n$ each row contains the same elements with the order permuted and possible sign chage
\item for a fixed $n$ and all $r$ the permutations are the same for all $M_n$
\item for a fixed $n$ and all $r> 0$ the permutations and sign changes are the same for all $M_n$
\item for a fixed $n$ and all $r< 0$ the permutations and sign changes are the same for all $M_n$
\end{itemize}
\end{proposition}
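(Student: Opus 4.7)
The plan is to derive all six claims from a single group-theoretic identity. Set $N=2^{n-2}$ and $\alpha_j=(2j-1)\pi/2^n$. Reducing $(2k-1)(2j-1)$ modulo $2^{n+1}$ and applying $\sin(\pi\pm\theta)=\mp\sin\theta$ together with $\sin(-\theta)=-\sin\theta$ produces, for each pair $(j,k)$, a unique $\sigma_j(k)\in\{1,\ldots,N\}$ and $\epsilon_j(k)\in\{\pm 1\}$ satisfying
$$\sin\bigl((2k-1)\alpha_j\bigr)=\epsilon_j(k)\,\sin\bigl(\alpha_{\sigma_j(k)}\bigr),$$
and both $\sigma_j$ and $\epsilon_j$ depend only on $n$, never on $r$. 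Because multiplication in $(\mathbb{Z}/2^{n+1}\mathbb{Z})^*$ is commutative, modding out by the stabilizer of $|\sin|$-values gives an abelian group $G$ of order $N$ with $\{2j-1\}_j$ as a transversal; the signed permutations $R_g:\mathbb{R}^N\to\mathbb{R}^N$ implementing the $G$-action form a pairwise commuting family of orthogonal matrices. This single structural fact drives the entire proof.

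Next I would exhibit $M_n$ in closed form. A short product-to-sum calculation gives $B^TB=\tfrac{N}{2}I$ for the matrix $B(j,k)=\sin\bigl((2k-1)\alpha_j\bigr)$, so there exist unique scalars $a_k^{(r)}$ independent of $j$ with
$$\frac{1}{\sin^r(\alpha_j)}=2^r\sum_{k=1}^N a_k^{(r)}\sin\bigl((2k-1)\alpha_j\bigr),\qquad a_k^{(r)}=\frac{2^{1-r}}{N}\sum_{j=1}^N\frac{\sin\bigl((2k-1)\alpha_j\bigr)}{\sin^r(\alpha_j)}.$$
Substituting the signed-permutation identity and relabeling $l=\sigma_j^{-1}(k)$ turns the sum over $\sin((2k-1)\alpha_j)$ into a sum over $\sin(\alpha_k)$, yielding the closed form
$$M_n(j,k)=a_{\sigma_j^{-1}(k)}^{(r)}\,\epsilon_j\bigl(\sigma_j^{-1}(k)\bigr).$$
From this one formula, each row of $M_n$ is the tuple $(a_k^{(r)})_k$ reordered by $\sigma_j^{-1}$ with entries multiplied by $\pm 1$ (third bullet), and since $\sigma_j,\epsilon_j$ do not depend on $r$, varying $r$ leaves the permutation scheme intact (fourth bullet).

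The last two bullets require showing that the sign pattern of $(a_k^{(r)})_k$ is itself constant for all odd $r>0$ and, separately, for all odd $r<0$. Using $\sin^2(N\alpha_j)=1/2$ in the Dirichlet sum $\sum_{k=1}^N\sin\bigl((2k-1)\theta\bigr)=\sin^2(N\theta)/\sin\theta$ gives $a_k^{(1)}=1$ for every $k$; induction on odd $r>0$ via $\csc^{r+2}=\csc^r(1+\cot^2)$ then preserves all-positivity, provided one verifies that $\cot^2(\alpha_j)$ has a nonnegative expansion in the $B$-basis at these discrete nodes. For odd $r<0$ the Chebyshev power-reduction identity for $\sin^{2m+1}$, combined with the signed-permutation reduction of any $\sin((2k-1)\theta)$ terms with $k>N$, delivers a single sign pattern common to all such $r$. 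This sign-propagation step is the main technical obstacle of the proof.

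Finally, the first two bullets follow at once. The closed form exhibits $M_n$ as a real linear combination of the commuting orthogonal operators $\{R_g\}_{g\in G}$, so all $M_n$ for fixed $n$ and varying odd $r$ lie in a single commutative subalgebra and therefore pairwise commute. Because $\{R_g\}$ is a commuting family of unitaries, it is simultaneously diagonalized by the orthonormal eigenbasis of characters of $G$; any real combination remains diagonal in that basis, hence is normal.
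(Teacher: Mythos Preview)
The paper does not prove this proposition; it simply cites an external reference (ref.\ [2]). So there is no in-paper argument to compare against, and your proposal is doing substantially more work than the paper itself.

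Your group-theoretic approach is sound and in fact proves more than is stated. The key observation --- that $\sin((2l-1)\alpha_j) = \epsilon_j(l)\sin(\alpha_{\sigma_j(l)})$ with $\sigma_j, \epsilon_j$ depending only on $n$ --- combined with the orthogonality $B^TB = \tfrac{N}{2}I$ yields the closed form $M_n(j,k) = a^{(r)}_{\sigma_j^{-1}(k)}\,\epsilon_j(\sigma_j^{-1}(k))$, and this already settles bullets 3--6 at once: the permutation $\sigma_j^{-1}$ and the sign pattern $k \mapsto \epsilon_j(\sigma_j^{-1}(k))$ are manifestly independent of $r$. Your formula therefore shows that the permutations \emph{and} sign changes are identical for \emph{all} odd $r$, not merely separately for $r>0$ and $r<0$ as the proposition asserts. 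Your digression into the sign pattern of the tuple $(a_k^{(r)})_k$ --- the $\cot^2$ expansion, the Chebyshev reduction --- rests on a misreading of what ``sign changes'' means here (it refers to the row-to-row signed permutation, not to the signs of the first-row entries) and is unnecessary. The step you flag as the ``main technical obstacle'' is not an obstacle at all; you can delete that paragraph and the argument becomes complete.

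For bullets 1--2, your argument that $M_n = \sum_l a_l^{(r)} P_l$ with the $P_l$ pairwise-commuting signed permutation (hence orthogonal) matrices is correct: commutativity of the $P_l$ follows from commutativity of multiplication in $(\mathbb{Z}/2^n\mathbb{Z})^*$, and simultaneous unitary diagonalizability then gives normality. It would strengthen the write-up to state explicitly that $P_l(j,k) = \epsilon_j(l)\,[\sigma_j(l)=k]$ and to verify $P_lP_m = P_mP_l$ directly from the multiplicative structure, but the idea is right.
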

\begin{proof}
Proofs can be found in ref. 2.
\end{proof}
\begin{proposition}\label{commute}
Let $$S_n=\sum_{k=1}^{2^{n-2}}\frac{1}{\sin^r(\frac{(2k-1)\pi}{2^n})}$$
Regarding the $2^{n-2}x2^{n-2}$ matrices, $M_n$ which satisfy $$\frac{1}{(\sin(\frac{(2j-1)\pi}{2^n}))^r}=2^r \sum_{k=1}^{2^{n-2}}M_n(j,k)\sin(\frac{(2k-1)\pi}{2^n})\label{mequa}$$
then $$S_n=2^{r-1} \sum_{j=1}^{2^{n-2}}M_n(1,j)\frac{1}{\sin(\frac{(2j-1)\pi}{2^n})}$$
\end{proposition}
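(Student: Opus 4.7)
The plan is to express both $S_n$ and the target sum as linear functionals of the vector $x=(x_1,\ldots,x_{2^{n-2}})^{T}$, where $x_k=\sin((2k-1)\pi/2^{n})$, and to reduce the claim to a single row-vector identity inside the commuting family from Proposition \ref{negodd}. Let $e$ denote the all-ones column vector and $e_1$ the first standard basis vector, and write $M_n^{(r)}$ for the matrix attached to exponent $r$ so that $M_n=M_n^{(r)}$.

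First I would sum the defining relation
$$\frac{1}{x_j^r}=2^r\sum_{k=1}^{2^{n-2}}M_n(j,k)\,x_k$$
over $j$, which collapses into $S_n=2^r\,e^{T}M_n\,x$. For the right-hand side of the claim, I would invoke the $r=1$ instance $1/x_j=2\sum_{k}M_n^{(1)}(j,k)\,x_k$, which writes the componentwise reciprocal vector as $2M_n^{(1)}x$; this turns the target expression into
$$2^{r-1}\sum_{j}M_n(1,j)\frac{1}{x_j}=2^{r-1}\,e_1^{T}M_n\bigl(2M_n^{(1)}x\bigr)=2^{r}\,e_1^{T}M_nM_n^{(1)}x.$$
By the commutativity assertion in Proposition \ref{negodd}, $M_n M_n^{(1)}=M_n^{(1)}M_n$, so this equals $2^{r}\,e_1^{T}M_n^{(1)}M_n\,x$. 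Comparing with the expression for $S_n$, the whole proposition reduces to the row-vector identity $e_1^{T}M_n^{(1)}=e^{T}$, i.e.\ the first row of $M_n^{(1)}$ is the all-ones vector.

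To establish this last step I would set $j=1$ in the defining equation of $M_n^{(1)}$ and invoke the classical sine-sum identity
$$\sum_{k=1}^{N}\sin\bigl((2k-1)\theta\bigr)=\frac{\sin^{2}(N\theta)}{\sin\theta}.$$
With $N=2^{n-2}$ and $\theta=\pi/2^{n}$ this gives $\sin^{2}(\pi/4)/\sin(\pi/2^{n})=1/(2\sin(\pi/2^{n}))$, which matches $1/\sin(\pi/2^{n})=2\sum_{k}M_n^{(1)}(1,k)\sin((2k-1)\pi/2^{n})$ precisely when $M_n^{(1)}(1,k)=1$ for every $k$. The main obstacle is this identification: the defining scalar equation is a single constraint in $2^{n-2}$ unknowns, so to conclude that the first row really is the all-ones vector I would need to lean on the structural information in Proposition \ref{negodd}, namely normality and the common permutation/sign pattern across the family (ultimately reference [2]), which together pin the entries of $M_n^{(1)}$ down. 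Granted that input, the remainder of the argument is the short commutativity calculation above.
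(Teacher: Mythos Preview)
Your argument is correct and is essentially the same as the paper's: both proofs express $S_n$ and the target sum via the matrices $M_n^{(r)}$ and $M_n^{(1)}$ acting on the sine vector, use the commutativity from Proposition~\ref{negodd} to swap them, and then reduce everything to the fact that the first row of $M_n^{(1)}$ is the all-ones vector. The paper simply exhibits this last fact in the $n=4$ example and defers the general case to reference~[2], whereas you make the dependence on that structural input explicit; otherwise the two arguments coincide.
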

\begin{proof}
We will show how the proof goes with an example having $n=4$, $r=3$.
\begin{equation}\notag
S_4=\left(
\begin{array}{cccc}
 1 & 1 & 1 & 1 \\
\end{array}
\right)
\left(
\begin{array}{c}
\sin^{-3}\left(\frac{\pi}{16}\right)   \\
\sin^{-3}\left(\frac{3\pi}{16}\right)  \\
\sin^{-3}\left(\frac{5\pi}{16}\right)  \\
\sin^{-3}\left(\frac{7\pi}{16}\right)  \\
\end{array}
\right)
\end{equation}
\begin{equation}\notag
\left(
\begin{array}{c}
\sin^{-3}\left(\frac{\pi}{16}\right)   \\
\sin^{-3}\left(\frac{3\pi}{16}\right)  \\
\sin^{-3}\left(\frac{5\pi}{16}\right)  \\
\sin^{-3}\left(\frac{7\pi}{16}\right)  \\
\end{array}
\right)
=8\left(
\begin{array}{cccc}
 2 & 5 & 7 & 8 \\
 7 & 2 & -8 & 5 \\
 5 & 8 & 2 & -7 \\
 -8 & 7 & -5 & 2 \\
\end{array}
\right)
\left(
\begin{array}{c}
\sin\left(\frac{\pi}{16}\right)   \\
\sin\left(\frac{3\pi}{16}\right)  \\
\sin\left(\frac{5\pi}{16}\right)  \\
\sin\left(\frac{7\pi}{16}\right)  \\
\end{array}
\right)
\end{equation}
Multiplying both sides of the above equation on the left by the matrix $M$ for $n=4$, $r=1$
\begin{equation}\notag
\left(
\begin{array}{cccc}
 1 & 1 & 1 & 1 \\
 1 & 1 & -1 & 1 \\
 1 & 1 & 1 & -1 \\
-1 & 1 & -1 & 1 \\
\end{array}
\right)
\end{equation}
We see that $S_n$ occupies the first entry in the resulting vector. However, we know from Proposition \ref {commute} that these matricies commute. 
\begin{equation}\notag
\left(
\begin{array}{cccc}
 1 & 1 & 1 & 1 \\
 1 & 1 & -1 & 1 \\
 1 & 1 & 1 & -1 \\
-1 & 1 & -1 & 1 \\
\end{array}
\right)
\left(
\begin{array}{c}
\sin^{-3}\left(\frac{\pi}{16}\right)   \\
\sin^{-3}\left(\frac{3\pi}{16}\right)  \\
\sin^{-3}\left(\frac{5\pi}{16}\right)  \\
\sin^{-3}\left(\frac{7\pi}{16}\right)  \\
\end{array}
\right)
=8\left(
\begin{array}{cccc}
 2 & 5 & 7 & 8 \\
 7 & 2 & -8 & 5 \\
 5 & 8 & 2 & -7 \\
 -8 & 7 & -5 & 2 \\
\end{array}
\right)
\left(
\begin{array}{cccc}
 1 & 1 & 1 & 1 \\
 1 & 1 & -1 & 1 \\
 1 & 1 & 1 & -1 \\
-1 & 1 & -1 & 1 \\
\end{array}
\right)
\left(
\begin{array}{c}
\sin\left(\frac{\pi}{16}\right)   \\
\sin\left(\frac{3\pi}{16}\right)  \\
\sin\left(\frac{5\pi}{16}\right)  \\
\sin\left(\frac{7\pi}{16}\right)  \\
\end{array}
\right)
\end{equation}
\begin{equation}\notag
=4\left(
\begin{array}{cccc}
 2 & 5 & 7 & 8 \\
 7 & 2 & -8 & 5 \\
 5 & 8 & 2 & -7 \\
 -8 & 7 & -5 & 2 \\
\end{array}
\right)
\left(
\begin{array}{c}
\sin^{-1}\left(\frac{\pi}{16}\right)   \\
\sin^{-1}\left(\frac{3\pi}{16}\right)  \\
\sin^{-1}\left(\frac{5\pi}{16}\right)  \\
\sin^{-1}\left(\frac{7\pi}{16}\right)  \\
\end{array}
\right)
\end{equation}\notag
The general case uses the same argument.
\end{proof}
The objective of this section is to state and prove the formulas that comprise the matricies $M_n$. As stated above, the matricies are completely determined by their first row. In the above proposition, for the case of odd power $r=2m+1$, the equation of the first row looks like
$$\frac{1}{(\sin(\frac{\pi}{2^n}))^{2m+1}}=2^{2m+1} \sum_{k=1}^{2^{n-2}}M_n(1,k)\sin(\frac{(2k-1)\pi}{2^n})$$
The strategy is to use induction on $m$. Multiplying both sides of this equation by $\frac{1}{(\sin(\frac{\pi}{2^n}))^{2}}$ corresponds to increasing $m$ by $1$. Multiplying both sides by  $(\sin(\frac{\pi}{2^n}))^{2}$ corresponds to decreasing $m$. Hence we begin with lemmas that describe the effect of those multiplications.
\begin{lemma}\label{sinequotient}
\begin{align}
\frac{\sin(\frac{(2j-1)\pi}{2^n})}{\left(\sin(\frac{\pi}{2^n})\right)^2}&=\sum_{i=1}^{2^{n-1}}i\sin(\frac{(2j-(2^n-2i+1))\pi}{2^n})+\sum_{i=1}^{2^{n-1}-1}i\sin(\frac{(2j+(2^n-2i-1))\pi}{2^n})\\\notag
\end{align}
\end {lemma}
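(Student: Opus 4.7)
The plan is to reorganize the right-hand side into a single clean sum and then recognize it as the imaginary part of a closed-form geometric-series expression. Set $N=2^n$, $\alpha=\pi/N$, $M=2^{n-1}$, $\beta=2\alpha=\pi/M$, and $\theta=(2j-1)\alpha$.

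First, we collapse the two sums. In the first sum reindex by $m=2i-1-N$, so that $m$ is odd with $-(N-1)\le m\le -1$; using $\sin((2j+m)\pi/N)=-\sin((2j+m+N)\pi/N)$ and relabeling $m+N\mapsto m$ (now odd, $1\le m\le N-1$), the first sum becomes $-\sum_{m\text{ odd}=1}^{N-1}\tfrac{m+1}{2}\sin((2j+m)\alpha)$. In the second sum substitute $m=N-2i-1$, giving $\sum_{m\text{ odd}=1}^{N-3}\tfrac{N-m-1}{2}\sin((2j+m)\alpha)$. Combining, the coefficient of $\sin((2j+m)\alpha)$ for $m\in\{1,3,\dots,N-3\}$ is $\tfrac{N}{2}-m-1$, while the $m=N-1$ term, using $\sin(\theta+\pi)=-\sin\theta$, contributes $+\tfrac{N}{2}\sin\theta$. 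Setting $l=(m+1)/2$, the RHS becomes
\[
\sum_{l=0}^{M-1}(M-2l)\sin(\theta+l\beta).
\]

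Next, we evaluate $T(x):=\sum_{l=0}^{M-1}(M-2l)x^l$ at $x=e^{i\beta}$. Since $M\beta=\pi$, we have $x^M=-1$. Writing $T(x)=Mf(x)-2xf'(x)$ with $f(x)=(1-x^M)/(1-x)$ and substituting $x^M=-1$, a short manipulation collapses the expression to
\[
T(x)=\frac{-4x}{(1-x)^2}.
\]
Applying the half-angle factorization $(1-e^{i\beta})=-2ie^{i\beta/2}\sin(\beta/2)$ gives $(1-x)^2=-4e^{i\beta}\sin^2(\beta/2)$, whence $T(e^{i\beta})=1/\sin^2(\beta/2)=1/\sin^2\alpha$, a real constant. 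Therefore
\[
\sum_{l=0}^{M-1}(M-2l)\sin(\theta+l\beta)=\mathrm{Im}\!\left(e^{i\theta}\,T(e^{i\beta})\right)=\frac{\sin\theta}{\sin^2\alpha},
\]
which is the LHS.

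The main obstacle is the bookkeeping in the first step: the two sums are asymmetric in their index ranges, and one must carefully exploit the $\pi$-periodicity twist $\sin(x+\pi)=-\sin x$ (twice) to see that the coefficients coalesce into the clean pattern $M-2l$. Once the symmetric form is obtained, the closed-form evaluation of $T(x)$ using $x^M=-1$ is routine, and extracting the imaginary part finishes the proof.
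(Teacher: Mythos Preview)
Your proof is correct. The reindexing in the first step checks out: after the two applications of $\sin(x+\pi)=-\sin x$, the coefficients really do collapse to $M-2l$ on the shifted angles $\theta+l\beta$, and the closed-form evaluation $T(e^{i\beta})=-4x/(1-x)^2=1/\sin^2\alpha$ (using $x^M=-1$ and $(1-x)^2=-4x\sin^2(\beta/2)$) is clean and accurate.

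Your route is genuinely different from the paper's. The paper proves the identity by induction on $n$: it multiplies numerator and denominator by $\prod_{k=0}^{n-2}\cos^2(\pi/2^{n-k})$, uses the double-angle formula to clear the denominator, and then applies the induction hypothesis at level $n$ to the two half-angle pieces that appear at level $n+1$; the bulk of the work is a rather long telescoping computation that reassembles the four resulting sums into the desired form. Your argument, by contrast, is a direct closed-form evaluation: you recognize the right-hand side as $\mathrm{Im}\bigl(e^{i\theta}\sum_{l}(M-2l)x^l\bigr)$ and exploit the single algebraic fact $x^M=-1$ to sum the series exactly. This avoids the induction entirely and makes transparent \emph{why} the specific triangular weights $i$ in the two sums are exactly what is needed to produce $1/\sin^2\alpha$. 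The paper's approach, on the other hand, is more in keeping with the recursive structure it uses elsewhere (passing from $n$ to $n+1$ mirrors the matrix recursions in Section~3), so it fits the surrounding narrative even if it is heavier for this particular lemma.
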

\begin{proof}
The proof uses the identity $$\sin(\theta)\cos(\phi)=\frac{1}{2}\left(\sin(\theta+\phi)+\sin(\theta-\phi)\right)$$
In the denominator we use
$$\sin(\theta)\cos(\theta)=\frac{1}{2}\left(\sin(2\theta)\right)$$
to successively multiply the value of the argument of $\sin$ by 2 until we reach $\frac{\pi}{4}$, thereby clearing the denominator. The same value multiplies the numerator where we use the trig identity to combine products of sine and cosine.
\begin{align}
\frac{\sin(\frac{(2j-1)\pi}{2^n})}{\left(\sin(\frac{\pi}{2^n})\right)^2}&=\frac{\sin(\frac{(2j-1)\pi}{2^n})\prod_{k=0}^{n-2}\left(\cos(\frac{\pi}{2^{n-k}})\right)^2 }{\left(\sin(\frac{\pi}{2^n})\right)^2\prod_{k=0}^{n-2}\left(\cos(\frac{\pi}{2^{n-k}})\right)^2}\\\notag
\end{align}

By induction on $n$. The first few values of $n$ are easily checked. Assume true for $n$ and consider $n+1$.
\begin{align}
\frac{\sin(\frac{(2j-1)\pi}{2^{n+1}})}{\left(\sin(\frac{\pi}{2^{n+1}})\right)^2}&=\frac{\sin(\frac{(2j-1)\pi}{2^{n+1)}})\left(\cos(\frac{\pi}{2^{n+1}})\right)^2 }{ \left(\sin(\frac{\pi}{2^{n+1}})\right)^2\left(\cos(\frac{\pi}{2^{n+1}})\right)^2  }\\\notag
&=\frac{4\sin(\frac{(2j-1)\pi}{2^{n+1)}})\left(\cos(\frac{\pi}{2^{n+1}})\right)^2 }{ \left(\sin(\frac{\pi}{2^{n}})\right)^2  }\\\notag
&=\frac{2\left(\sin(\frac{j\pi}{2^{n}})+\sin(\frac{(j-1)\pi}{2^{n}})\right)\cos(\frac{\pi}{2^{n+1}}) }{ \left(\sin(\frac{\pi}{2^{n}})\right)^2  }\\\notag
&=\frac{2\left(\sin(\frac{(2(j+1)/2-1)\pi}{2^{n}})+\sin(\frac{(2(j/2)-1)\pi}{2^{n}})\right)\cos(\frac{\pi}{2^{n+1}}) }{ \left(\sin(\frac{\pi}{2^{n}})\right)^2  }\\\notag
\end{align}
\begin{align}
&=\left(\sum_{i=1}^{2^{n-1}}i\sin(\frac{(j+1-(2^n-2i+1))\pi}{2^n})+\sum_{i=1}^{2^{n-1}-1}i\sin(\frac{(j+1+(2^n-2i-1))\pi}{2^n})\right)\cos(\frac{\pi}{2^{n+1}})\\\notag
&+\left(\sum_{i=1}^{2^{n-1}}i\sin(\frac{(j-(2^n-2i+1))\pi}{2^n})+\sum_{i=1}^{2^{n-1}-1}i\sin(\frac{(j+(2^n-2i-1))\pi}{2^n})\right)\cos(\frac{\pi}{2^{n+1}})\\\notag
&=\sum_{i=1}^{2^{n-1}}i\biggl(\sin(\frac{(2(j-2^n+2i)+1)\pi}{2^{n+1}})+\sin(\frac{(2(j-2^n+2i)-1)\pi}{2^{n+1}})\biggr)\\\notag
&+\sum_{i=1}^{2^{n-1}-1}i\biggl(\sin(\frac{(2(j+2^n-2i)+1)\pi}{2^{n+1}})+\sin(\frac{(2(j+2^n-2i)-1)\pi}{2^{n+1}})\biggr)\\\notag
&+\sum_{i=1}^{2^{n-1}}i\biggl(\sin(\frac{(2(j-2^n+2i-1)+1)\pi}{2^{n+1}})+\sin(\frac{(2(j-2^n+2i-1)-1)\pi}{2^{n+1}})\biggr)\\\notag
&+\sum_{i=1}^{2^{n-1}-1}i\biggl(\sin(\frac{(2(j+2^n-2i-1)+1)\pi}{2^{n+1}})+\sin(\frac{(2(j+2^n-2i-1)-1)\pi}{2^{n+1}})\biggr)\\\notag
&=\sum_{i=1}^{2^{n-1}}i\biggl(\sin(\frac{(2(j-2^n+2i)+1)\pi}{2^{n+1}})+2\sin(\frac{(2(j-2^n+2i)-1)\pi}{2^{n+1}})\biggr)\\\notag
&+\sum_{i=1}^{2^{n-1}-1}i\biggl(\sin(\frac{(2(j+2^n-2i)+1)\pi}{2^{n+1}})+2\sin(\frac{(2(j+2^n-2i)-1)\pi}{2^{n+1}})\biggr)\\\notag
&+\sum_{i=1}^{2^{n-1}}i\biggl(\sin(\frac{(2(j-2^n+2i)-3)\pi}{2^{n+1}})\biggr)+\sum_{i=1}^{2^{n-1}-1}i\biggl(\sin(\frac{(2(j+2^n-2i)-3)\pi}{2^{n+1}})\biggr)\\\notag
&=\sum_{i=1}^{2^{n-1}}i\biggl(\sin(\frac{(2j-2^{n+1}+(4i+1))\pi}{2^{n+1}})+2\sin(\frac{(2j-2^{n+1}+(4i-1))\pi}{2^{n+1}})\biggr)\\\notag
&+\sum_{i=1}^{2^{n-1}-1}i\biggl(\sin(\frac{(2j+2^{n+1}-(4i-1))\pi}{2^{n+1}})+2\sin(\frac{(2j+2^{n+1}-(4i+1))\pi}{2^{n+1}})\biggr)\\\notag
&+\sum_{i=1}^{2^{n-1}}i\biggl(\sin(\frac{(2j-2^{n+1}+(4i-3))\pi}{2^{n+1}})\biggr)+\sum_{i=1}^{2^{n-1}-1}i\biggl(\sin(\frac{(2j+2^{n+1}-(4i+3))\pi}{2^{n+1}})\biggr)\\\notag
&=\sum_{i=1}^{2^{n-1}}2i\biggl(\sin(\frac{(2j-2^{n+1}+(4i-1))\pi}{2^{n+1}})\biggr)+\sum_{i=1}^{2^{n-1}-1}2i\biggl(\sin(\frac{(2j+2^{n+1}-(4i+1))\pi}{2^{n+1}})\biggr)\\\notag
&+\sum_{i=1}^{2^{n-1}}i\biggl(\sin(\frac{(2j-2^{n+1}+(4i-3))\pi}{2^{n+1}})\biggr)+\sum_{i=1}^{2^{n-1}-1}i\sin(\frac{(2j+2^{n+1}-(4i+3))\pi}{2^{n+1}})\biggr)\\\notag
&+\sum_{i=1}^{2^{n-1}}i\biggl(\sin(\frac{(2j-2^{n+1}+(4i+1))\pi}{2^{n+1}})\biggr)+\sum_{i=1}^{2^{n-1}-1}i\biggl(\sin(\frac{(2j+2^{n+1}-(4i-1))\pi}{2^{n+1}})\biggr)\\\notag
\end{align}
Note if you repace $i$ with $i-1$ in $4i+1$, you get $4i-3$. 
Hence the sum equals
\begin{align}
&\sum_{i=1}^{2^{n-1}}2i\biggl(\sin(\frac{(2j-2^{n+1}+(4i-1))\pi}{2^{n+1}})\biggr)+\sum_{i=1}^{2^{n-1}-1}2i\biggl(\sin(\frac{(2j+2^{n+1}-(4i+1))\pi}{2^{n+1}})\biggr)\\\notag
&+\sum_{i=1}^{2^{n-1}}i\biggl(\sin(\frac{(2j-2^{n+1}+(4i-3))\pi}{2^{n+1}})\biggr)+\sum_{i=2}^{2^{n-1}}(i-1)\biggl(\sin(\frac{(2j+2^{n+1}-(4i-1))\pi}{2^{n+1}})\biggr)\\\notag
&+\sum_{i=2}^{2^{n-1}+1}(i-1)\biggl(\sin(\frac{(2j-2^{n+1}+(4i-3))\pi}{2^{n+1}})\biggr)+\sum_{i=1}^{2^{n-1}-1}i\biggl(\sin(\frac{(2j+2^{n+1}-(4i-1))\pi}{2^{n+1}})\biggr)\\\notag
&=\sum_{i=1}^{2^{n}}i\sin(   \frac{(2j-(2^{n+1}-2i+1))\pi}{2^{n+1}}  )+\sum_{i=1}^{2^{n}-1}i\sin(   \frac{(2j+(2^{n+1}-2i-1))\pi}{2^{n+1}}  )\\\notag
\end{align}
\end{proof}
\begin{corollary}
\begin{align}
\frac{1}{\left(\sin(\frac{\pi}{2^n})\right)^2}\sum_{j=1}^{2^{n-2}}&a(j)\sin(\frac{(2j-1))\pi}{2^n})=\\\notag
&\sum_{j=1}^{2^{n-2}}\biggl(\sum_{i=1}^j2(2i-1)a(i)+2(2j-1)\sum_{i=j+1}^{2^{n-2}}a(i)\biggr)\sin(\frac{(2j-1)\pi}{2^n})\\\notag
\end{align}
\end {corollary}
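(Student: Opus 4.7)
The plan is to apply Lemma \ref{sinequotient} to each sine $\sin\!\left(\frac{(2j-1)\pi}{2^n}\right)$ in the LHS sum, reduce every resulting $\sin(m\pi/2^n)$ (with $m$ odd) to a canonical $\pm\sin\!\left(\frac{(2k-1)\pi}{2^n}\right)$ with $k\in\{1,\dots,2^{n-2}\}$ using $2\pi$-periodicity together with the identities $\sin(\pi-\theta)=\sin(\theta)$, $\sin(\pi+\theta)=-\sin(\theta)$, and $\sin(-\theta)=-\sin(\theta)$, and then collect signed coefficients.

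By linearity in $a$, it suffices to determine, for each pair $(j,k)$, the coefficient $c(j,k)$ of $\sin\!\left(\frac{(2k-1)\pi}{2^n}\right)$ in the expansion of $\sin\!\left(\frac{(2j-1)\pi}{2^n}\right)/\sin^2\!\left(\frac{\pi}{2^n}\right)$ supplied by the lemma. Since the RHS of the corollary can be rewritten as $\sum_k\bigl(\sum_j 2(2\min(j,k)-1)\,a(j)\bigr)\sin\!\left(\frac{(2k-1)\pi}{2^n}\right)$ (using that $\sum_{i=1}^{j}2(2i-1)a(i)+2(2j-1)\sum_{i>j}a(i)=\sum_i 2(2\min(i,j)-1)a(i)$), the corollary is equivalent to the identity $c(j,k)=2(2\min(j,k)-1)$.

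To compute $c(j,k)$, I observe that the two sums in Lemma \ref{sinequotient} together produce $2^n-1$ terms whose numerators $m$ range over exactly the odd integers in $[2j-2^n+1,\,2j+2^n-3]$, with explicit coefficient $(m-2j+2^n+1)/2$ when $m\le 2j-1$ (first sum) and $(2j+2^n-m-1)/2$ when $m\ge 2j+1$ (second sum). The canonical $\pm\sin\!\left(\frac{(2k-1)\pi}{2^n}\right)$ arises precisely from $m$ in one of the four residue classes $\pm(2k-1)$ and $\pm(2^n-2k+1)$ modulo $2^{n+1}$, with corresponding signs $+,-,+,-$. A case analysis on whether $k<j$, $k>j$, or $k=j$ pins down which representative of each class lies in the range and in which of the two sums; substitution into the coefficient formulas above causes the $2^{n-1}$ contributions to cancel, leaving $c(j,k)=4\min(j,k)-2$ in every case. (At $k=j$ the class of $2^n+2j-1$ has no representative in the range, but that class would contribute coefficient $0$ anyway, so the formula is uniform.)

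The main obstacle is the bookkeeping in this case analysis: tracking which of the four residue representatives land in $[2j-2^n+1,\,2j+2^n-3]$, whether each falls in the first or second summand of Lemma \ref{sinequotient}, and pairing its contribution with the correct sign $\pm$ from the reduction. Once this placement is settled for the three regimes $k<j$, $k>j$, $k=j$, the algebra is a finite telescoping verification, and the corollary follows.
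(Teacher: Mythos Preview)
Your approach is correct and genuinely different from the paper's. The paper proves this corollary by a separate induction on $n$, parallel to (and repeating the mechanics of) the proof of Lemma~\ref{sinequotient}: multiply numerator and denominator by $\cos^2(\pi/2^{n+1})$, apply the induction hypothesis at level $n$, and regroup. You instead treat Lemma~\ref{sinequotient} as a black box, apply it term by term, and reduce every resulting $\sin(m\pi/2^n)$ to a canonical $\pm\sin((2k-1)\pi/2^n)$ via parity and the reflection $\sin(\pi-\theta)=\sin\theta$.

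The key reformulation you found---that the right-hand side has coefficient $2(2\min(i,j)-1)$ on $a(i)\sin((2j-1)\pi/2^n)$, so the corollary reduces to the single pointwise identity $c(j,k)=4\min(j,k)-2$---is a clean way to organize the problem and is not made explicit in the paper. Your description of the four residue classes $\pm(2k-1)$, $\pm(2^n-2k+1)$ mod $2^{n+1}$ with signs $+,-,+,-$ is accurate, and spot-checking confirms the claimed cancellation (the $2^{n-1}$ parts of the two piecewise-linear coefficient formulas do indeed drop out in pairs). The paper's inductive route avoids this four-case bookkeeping at the cost of essentially redoing the lemma's argument; your route is logically tighter (it deduces the corollary from the lemma rather than reproving it) but front-loads the casework you flag as the main obstacle. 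Either is fine.
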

\begin{proof}
This can be proven by induction using the same approach as in the previous lemma.
\end{proof}
\begin{corollary}
Assume the recursion
\begin{align}
a(j,m)=\frac{1}{2}(\sum_{i=1}^j(2i-1)a(i,m-1)+(2j-1)\sum_{i=j+1}^{2^{n-2}}a(i,m-1))\\\notag
\end{align}
with $a(j,0)=1$ for all $j$.
And assume the $2^{n-2}x2^{n-2}$ matrix $M_n$ satisfies
\begin{align}
 \frac{1}{(\sin(\frac{(2j-1)\pi}{2^n}))^{2m+1}}&=2^{2m+1} \sum_{k=1}^{2^{n-2}}M_n(j,k)\sin(\frac{(2k-1)\pi}{2^n})\\\notag
\end{align}
Then $$M_n(1,j)=a(j,m)$$
\end {corollary}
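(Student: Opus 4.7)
The natural approach is induction on $m$. The recursion for $a(j,m)$ has been engineered precisely to mirror the effect that multiplication by $1/\sin^2(\pi/2^n)$ has on a linear combination $\sum_j c_j\sin((2j-1)\pi/2^n)$, as recorded in the preceding corollary. Since passing from the first-row identity for $r=2m-1$ to the one for $r=2m+1$ is exactly multiplication by $1/\sin^2(\pi/2^n)$, the induction almost writes itself.

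For the base case $m=0$ one needs
$$\frac{1}{\sin(\pi/2^n)}\;=\;2\sum_{k=1}^{2^{n-2}}\sin\!\left(\frac{(2k-1)\pi}{2^n}\right),$$
so that $M_n(1,k)=1=a(k,0)$. This identity can be established directly by induction on $n$ using the half-angle and product-to-sum formulas, or extracted from the standard partial fraction expansion of the cotangent; alternatively, it is already implicit in Proposition \ref{commute} applied with $r=1$.

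For the inductive step, assume
$$\frac{1}{\sin^{2m-1}(\pi/2^n)}\;=\;2^{2m-1}\sum_{j=1}^{2^{n-2}}a(j,m-1)\sin\!\left(\frac{(2j-1)\pi}{2^n}\right).$$
Multiplying both sides by $1/\sin^2(\pi/2^n)$ gives $1/\sin^{2m+1}(\pi/2^n)$ on the left, while the preceding corollary transforms the right-hand side into
$$2^{2m-1}\sum_{j=1}^{2^{n-2}}\left(\sum_{i=1}^{j}2(2i-1)a(i,m-1)+2(2j-1)\sum_{i=j+1}^{2^{n-2}}a(i,m-1)\right)\sin\!\left(\frac{(2j-1)\pi}{2^n}\right).$$
The bracketed coefficient is, by the definition of the recursion (together with the factor of $2$ pulled out of the corollary's formula), exactly what is needed so that the whole right-hand side equals $2^{2m+1}\sum_{j}a(j,m)\sin((2j-1)\pi/2^n)$. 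Matching against the defining identity for $M_n$ at $r=2m+1$ and reading off the coefficient of each $\sin((2j-1)\pi/2^n)$ yields $M_n(1,j)=a(j,m)$, completing the induction.

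The argument is essentially bookkeeping, so the main obstacle is tracking the accumulating powers of $2$ carefully: the factor $2$ produced by the corollary must combine with the $2^{2m-1}$ from the inductive hypothesis to give exactly the $2^{2m+1}$ demanded by the matrix definition, and the $m=0$ identity must be verified with precisely the stated constant. Once these bookkeeping points are handled, every other step is a direct transcription of the preceding corollary into the language of the $a(j,m)$ recursion.
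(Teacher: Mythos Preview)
Your approach is exactly the paper's: induction on $m$, with the preceding corollary supplying the transition $m-1\to m$ and the base case $M_n(1,j)=1$ for $m=0$. However, your power-of-$2$ bookkeeping does not actually close. The preceding corollary turns the coefficient of $\sin((2j-1)\pi/2^n)$ into $\sum_{i\le j}2(2i-1)a(i,m-1)+2(2j-1)\sum_{i>j}a(i,m-1)=2\,a(j,m)$, so the right-hand side becomes $2^{2m-1}\cdot 2\sum_j a(j,m)\sin(\cdots)=2^{2m}\sum_j a(j,m)\sin(\cdots)$, not $2^{2m+1}\sum_j a(j,m)\sin(\cdots)$ as you assert. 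You are short one factor of $2$ at every step of the induction.

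This is not merely a slip in your write-up but a defect in the stated corollary: with the recursion as written (no $\tfrac12$), one actually gets $a(j,m)=2^{m}M_n(1,j)$. The paper's own $n=4$, $m=1$ example confirms this, since the first row of $M_n$ is $(2,5,7,8)$ while the recursion yields $(4,10,14,16)$. The correct recursion carries an extra factor of $\tfrac12$, and indeed that is the version the paper itself uses later in the proof of Proposition~\ref{Mmatrix}; the one-sentence proof of the present corollary simply does not perform the check you attempted.
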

\begin{proof}
This follows from the previous lemma since the matrix corresponding to $m=0$ has $M(1,j)=1$ for all $j$.
\end{proof}
\begin{lemma}
\begin{align}
\sin\left(\frac{(2j-1)\pi}{2^n}\right)\left(\sin\left(\frac{\pi}{2^n}\right)\right)^2&=\frac{1}{4}\left(-\sin\left(\frac{(2j-3)\pi}{2^n}\right)+2\sin\left(\frac{(2j-1)\pi}{2^n}\right)-\sin\left(\frac{(2j+1)\pi}{2^n}\right)\right)\\\notag
\end{align}
\end{lemma}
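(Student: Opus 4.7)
The plan is to apply elementary product-to-sum trig identities directly. Set $\theta=\frac{(2j-1)\pi}{2^n}$ and $\phi=\frac{\pi}{2^n}$, so the goal is to show that $\sin(\theta)\sin^2(\phi)$ equals $\tfrac14\bigl(-\sin(\theta-2\phi)+2\sin(\theta)-\sin(\theta+2\phi)\bigr)$, since $2\phi=\frac{2\pi}{2^n}$ shifts the argument numerator by exactly $\pm 2$.

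First I would rewrite the squared sine via the half-angle identity $\sin^2(\phi)=\tfrac12(1-\cos(2\phi))$, giving
\begin{align}\notag
\sin(\theta)\sin^2(\phi)=\tfrac12\sin(\theta)-\tfrac12\sin(\theta)\cos(2\phi).
\end{align}
Next I would apply the product-to-sum identity
\begin{align}\notag
\sin(\theta)\cos(2\phi)=\tfrac12\bigl(\sin(\theta+2\phi)+\sin(\theta-2\phi)\bigr),
\end{align}
which is the same identity used at the top of the proof of Lemma \ref{sinequotient}. Substituting and collecting terms yields
\begin{align}\notag
\sin(\theta)\sin^2(\phi)=\tfrac14\bigl(2\sin(\theta)-\sin(\theta+2\phi)-\sin(\theta-2\phi)\bigr).
\end{align}

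Finally I would substitute back $\theta\pm 2\phi=\frac{(2j-1\pm 2)\pi}{2^n}=\frac{(2j\pm 1\text{ or }2j\mp 3)\pi}{2^n}$, so that $\theta-2\phi=\frac{(2j-3)\pi}{2^n}$ and $\theta+2\phi=\frac{(2j+1)\pi}{2^n}$, which matches the claimed right-hand side exactly. There is no real obstacle here: the identity is a two-line trigonometric computation, and the only thing to check is that the arithmetic shift $2\phi=\frac{\pi}{2^{n-1}}$ produces numerator shifts of exactly $\pm 2$ in the $\frac{\cdot\,\pi}{2^n}$ normalization.
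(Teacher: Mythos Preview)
Your proof is correct and follows essentially the same approach as the paper: the paper writes $\sin^2\phi=1-\cos^2\phi$ and applies $\sin\theta\cos\phi=\tfrac12(\sin(\theta+\phi)+\sin(\theta-\phi))$ twice, while you use the equivalent half-angle form $\sin^2\phi=\tfrac12(1-\cos 2\phi)$ and apply the product-to-sum identity once with $2\phi$. Both are the same elementary computation.
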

\begin{proof}
\begin{align}
\sin\left(\frac{(2j-1)\pi}{2^n}\right)\left(\sin\left(\frac{\pi}{2^n}\right)\right)^2&=\sin\left(\frac{(2j-1)\pi}{2^n}\right)\left(1-\left(\cos\left(\frac{\pi}{2^n}\right)^2\right)\right)\\\notag
\end{align}
Then use $$\sin(\theta)\cos(\phi)=\frac{1}{2}\left(\sin(\theta+\phi)+\sin(\theta-\phi)\right)$$
\end{proof}
\begin{corollary}
\begin{align}
\left(\sin(\frac{\pi}{2^n})\right)^2&\sum_{j=1}^{2^{n-2}}a(j)\sin(\frac{(2j-1)\pi}{2^n})=\frac{1}{4}\biggl(\left(3a(1)-a(2)\right)\sin\left(\frac{\pi}{2^n})\right)+\\\notag
&\sum_{j=3}^{2^{n-2}-1}\left(-a(j-1)+2a(j)-a(j+1)\right)\sin\left(\frac{(2j-1)\pi}{2^n}\right)\\\notag
&+\left(-a(2^{n-2}-1)+a(2^{n-2})\right)\sin\left(\frac{(2^{n-1}-1)\pi}{2^n}\right)\biggr)\\\notag
\end{align}
\end {corollary}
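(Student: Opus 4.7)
The plan is to apply the previous lemma termwise and then reindex so that every sine on the right-hand side is brought into the canonical form $\sin((2j-1)\pi/2^n)$. Writing $m=2^{n-2}$ for brevity, distributing $\sin^2(\pi/2^n)$ through the sum yields
\begin{align*}
\tfrac{1}{4}\sum_{j=1}^{m} a(j)\!\left(-\sin\tfrac{(2j-3)\pi}{2^n}+2\sin\tfrac{(2j-1)\pi}{2^n}-\sin\tfrac{(2j+1)\pi}{2^n}\right)=\tfrac{1}{4}(-A+2B-C),
\end{align*}
where $A$, $B$, $C$ denote the three resulting sums.

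Next I would shift the index $j\mapsto j+1$ in $A$ and $j\mapsto j-1$ in $C$ to make every sine read $\sin((2j-1)\pi/2^n)$. This creates boundary terms outside the legal range $\{1,\ldots,m\}$: a $j=0$ term in $A$ and a $j=m+1$ term in $C$. Both must be folded back in using the symmetries of sine. In $A$, the oddness identity $\sin(-\pi/2^n)=-\sin(\pi/2^n)$ absorbs the $j=0$ term as $-a(1)\sin(\pi/2^n)$. In $C$, the computation $(2m+1)\pi/2^n=\pi-(2m-1)\pi/2^n$ together with $\sin(\pi-x)=\sin(x)$ absorbs the $j=m+1$ term as $a(m)\sin((2m-1)\pi/2^n)$.

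Once those two reflections are in hand, collecting the coefficient of each $\sin((2j-1)\pi/2^n)$ is routine: the interior indices $2\le j\le m-1$ each receive the uniform stencil $-a(j-1)+2a(j)-a(j+1)$ from the three bulk sums, while the endpoints $j=1$ and $j=m$ pick up the reflected boundary contributions and collapse to $3a(1)-a(2)$ and $-a(m-1)+a(m)$, respectively. The main obstacle is purely the bookkeeping at these two boundaries; once the reflected terms have been correctly re-identified with interior indices, the remainder is a straightforward coefficient comparison that reproduces the stated identity.
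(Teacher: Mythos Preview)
Your argument is correct. You apply the preceding product formula
\[
\sin\!\tfrac{(2j-1)\pi}{2^n}\,\sin^2\!\tfrac{\pi}{2^n}
=\tfrac14\Bigl(-\sin\tfrac{(2j-3)\pi}{2^n}+2\sin\tfrac{(2j-1)\pi}{2^n}-\sin\tfrac{(2j+1)\pi}{2^n}\Bigr)
\]
termwise, shift indices in the outer two sums, and fold the two stray boundary terms back in via $\sin(-x)=-\sin x$ at the left end and $\sin(\pi-x)=\sin x$ at the right end (since $(2m+1)\pi/2^n=\pi-(2m-1)\pi/2^n$ when $m=2^{n-2}$). The resulting coefficient count gives $3a(1)-a(2)$ at $j=1$, the second difference $-a(j-1)+2a(j)-a(j+1)$ for $2\le j\le m-1$, and $-a(m-1)+a(m)$ at $j=m$, exactly as required. (Your range $2\le j\le m-1$ is the right one; the lower limit $j=3$ printed in the statement appears to be a typo for $j=2$.)

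This is a somewhat different route from the paper's. The paper's proof is the single sentence ``This can be proven by induction using the same approach as in the previous lemma,'' pointing back to the induction on $n$ carried out for Lemma~\ref{sinequotient} (multiply numerator and denominator by $\cos^2(\pi/2^n)$, halve the arguments, and invoke the $n-1$ case). Your approach bypasses that induction entirely: once the pointwise product identity is in hand, the corollary follows for every $n$ at once by pure reindexing. The gain is that you avoid re-running the doubling argument and make the second-difference structure of the coefficients transparent; the paper's inductive route, by contrast, would re-derive the identity from scratch at each level of $n$ rather than leveraging the lemma it just proved.
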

\begin{proof}
This can be proven by induction using the same approach as in the previous lemma.
\end{proof}
\begin{corollary}\label{induct}

If $a(0)$ and $a(2^{n-2}+1)$ are defined and $a(0)=-a(1)$ and $a(2^{n-2}+1)=a(2^{n-2})$, then
\begin{align}
\left(\sin(\frac{\pi}{2^n})\right)^2&\sum_{j=1}^{2^{n-2}}a(j)\sin\left(\frac{(2j-1)\pi}{2^n}\right)=\\\notag
&\frac{1}{4}\sum_{j=1}^{2^{n-2}}\left(-a(j-1)+2a(j)-a(j+1)\right)\sin\left(\frac{(2j-1)\pi}{2^n}\right)\\\notag
\end{align}
\end {corollary}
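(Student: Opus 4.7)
The plan is to derive this corollary directly from the immediately preceding corollary, which already expresses $\left(\sin(\pi/2^n)\right)^2 \sum_{j=1}^{2^{n-2}} a(j)\sin((2j-1)\pi/2^n)$ as a three-piece decomposition: a boundary term at $j=1$ with coefficient $3a(1)-a(2)$, a middle sum over the interior indices with coefficient $-a(j-1)+2a(j)-a(j+1)$, and a boundary term at $j=2^{n-2}$ with coefficient $-a(2^{n-2}-1)+a(2^{n-2})$. The only difference from the desired formula is that the two boundary coefficients do not yet fit the uniform second-difference pattern $-a(j-1)+2a(j)-a(j+1)$; the role of the hypotheses $a(0)=-a(1)$ and $a(2^{n-2}+1)=a(2^{n-2})$ is precisely to patch this discrepancy.

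Concretely, I would check that under the hypothesis $a(0)=-a(1)$, the generic expression $-a(0)+2a(1)-a(2)$ becomes $a(1)+2a(1)-a(2)=3a(1)-a(2)$, which matches the boundary coefficient at $j=1$ from the previous corollary. Symmetrically, under $a(2^{n-2}+1)=a(2^{n-2})$, the generic expression $-a(2^{n-2}-1)+2a(2^{n-2})-a(2^{n-2}+1)$ collapses to $-a(2^{n-2}-1)+a(2^{n-2})$, matching the coefficient at $j=2^{n-2}$.

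With those two algebraic identifications, the three pieces in the previous corollary coalesce into a single sum over $j=1,\ldots,2^{n-2}$ with the uniform coefficient $-a(j-1)+2a(j)-a(j+1)$, which is exactly the right-hand side of the claimed identity. There is no serious obstacle here: this is a purely bookkeeping deduction from the previous corollary, and the boundary conditions in the hypothesis are tailored to match exactly what is needed. If anything, the only point requiring care is that the middle sum in the previous corollary should be read as covering all interior indices $j=2,\ldots,2^{n-2}-1$ (the expansion via the preceding telescoping lemma with $\sin\theta\,\sin^2(\pi/2^n)=\tfrac14(-\sin(\theta-2\pi/2^n)+2\sin\theta-\sin(\theta+2\pi/2^n))$ confirms this), after which the identification with the generic coefficient at the endpoints completes the proof.
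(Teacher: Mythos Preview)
Your argument is correct and is exactly the intended deduction: the paper gives no explicit proof for this corollary, treating it as an immediate consequence of the preceding three-piece formula once the boundary hypotheses $a(0)=-a(1)$ and $a(2^{n-2}+1)=a(2^{n-2})$ are imposed. Your observation that the interior sum must run over $j=2,\ldots,2^{n-2}-1$ (the paper's lower limit $j=3$ is a typo) is also right, and your verification via the underlying identity $\sin\theta\,\sin^2(\pi/2^n)=\tfrac14(-\sin(\theta-2\pi/2^n)+2\sin\theta-\sin(\theta+2\pi/2^n))$ is the correct sanity check.
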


\begin{proposition}\label{Mmatrix}

Assume $m\in\mathbb{N}$. Regarding the $2^{n-2}x2^{n-2}$ matrices, $M_n$ which satisfy $$\frac{1}{(\sin(\frac{(2j-1)\pi}{2^n}))^{2m+1}}=2^{2m+1} \sum_{k=1}^{2^{n-2}}M_n(j,k)\sin(\frac{(2k-1)\pi}{2^n})$$
Let $E_j(x)$ be the j-th Euler polynomial and let $B_n^{(\alpha)}(x)$ be the generalized Bernoulli polynomial of degree $n$ in $x$ then
\begin{align}
M_n(1,j)&=(-1)^m\frac{2^{2m(n-1)}}{(2m)!}\sum_{k=0}^{m-1}\Biggl(2^{(n-1)(-2k-1)}\binom{2m-1}{2k}\\\notag
&\Biggl(B_{2k}^{(2m)}(m)\Biggr)\Biggl(2^{n-1}E_{2m-2k}\left(\frac{j}{2^{n-1}}\right)-mE_{2m-2k-1}\left(\frac{j}{2^{n-1}}\right)\Biggr)\Biggr)\\\notag
\end{align}
\end{proposition}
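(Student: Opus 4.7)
The plan is to prove Proposition \ref{Mmatrix} by induction on $m \ge 1$. Denote by $\widetilde M_n^{(m)}(j)$ the closed-form expression on the right-hand side of the proposition, extended as a polynomial in $j$ via the Euler polynomials $E_k(j/2^{n-1})$. Multiplying both sides of the row-$1$ identity at exponent $2m+3$ by $\sin^2(\pi/2^n)$ and using the row-$1$ identity at exponent $2m+1$ to rewrite the left-hand side, one finds
$$
\sum_{k=1}^{2^{n-2}} M_n^{(m)}(1,k)\sin\!\Bigl(\tfrac{(2k-1)\pi}{2^n}\Bigr) = 4\sin^2\!\Bigl(\tfrac{\pi}{2^n}\Bigr)\sum_{k=1}^{2^{n-2}} M_n^{(m+1)}(1,k)\sin\!\Bigl(\tfrac{(2k-1)\pi}{2^n}\Bigr).
$$
Applying Corollary \ref{induct} (whose hypotheses amount to the boundary conventions $M_n^{(m+1)}(1,0)=-M_n^{(m+1)}(1,1)$ and $M_n^{(m+1)}(1,2^{n-2}+1)=M_n^{(m+1)}(1,2^{n-2})$) and matching coefficients of $\sin((2j-1)\pi/2^n)$ yields
$$
M_n^{(m)}(1,j) = -M_n^{(m+1)}(1,j-1) + 2M_n^{(m+1)}(1,j) - M_n^{(m+1)}(1,j+1).
$$
Together with the two boundary conditions, this is a linear system whose homogeneous version admits only the trivial solution (any discrete-arithmetic progression forced by the constraint $a(0)+a(1)=0$ and $a(N+1)=a(N)$ must vanish), so it uniquely determines $M_n^{(m+1)}$ from $M_n^{(m)}$. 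The inductive step therefore reduces to verifying that $\widetilde M_n^{(m+1)}$ satisfies the same second-difference recurrence (with right-hand side $\widetilde M_n^{(m)}=M_n^{(m)}$ by induction) and the same two boundary conditions.

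All three of these identities are already recorded in the preceding machinery under the substitution $x = 1/2^{n-1}$. The interior second-difference identity $\widetilde M_n^{(m+1)}(j-1) - 2\widetilde M_n^{(m+1)}(j) + \widetilde M_n^{(m+1)}(j+1) = -\widetilde M_n^{(m)}(j)$ is exactly Lemma \ref{middleterm} after the substitution; the factor $2m(2m-1)$ appearing on its right-hand side is precisely what is needed to convert $1/(2m)!$ to $1/(2m-2)!$ in the outer prefactor. The lower boundary condition $\widetilde M_n^{(m+1)}(0) + \widetilde M_n^{(m+1)}(1) = 0$ is Lemma \ref{firstterm}: plugging $j=0$ into the formula kills all $E_{i+1}(0)$ contributions with $i$ odd (since $E_{2k}(0)=0$ for $k>0$) and leaves the $E_i(0)$ side of Lemma \ref{firstterm}, while $j=1$ produces the $E_{i+1}(x),E_i(x)$ side. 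The upper boundary condition $\widetilde M_n^{(m+1)}(2^{n-2}+1) = \widetilde M_n^{(m+1)}(2^{n-2})$ amounts to evaluating the Euler polynomials at $\tfrac{1}{2}+x$ versus $\tfrac{1}{2}$, which is exactly Lemma \ref{lastterm}.

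For the base case $m=1$, only $k=0$ contributes and the formula collapses to $\widetilde M_n^{(1)}(j) = \tfrac{1}{2}\bigl(-j^2 + (2^{n-1}+1)j - 2^{n-2}\bigr)$; one checks against the recursion for $a(j,1)$ from the corollaries following Lemma \ref{sinequotient}, or directly against the $n=4$ row $(2,5,7,8)$ displayed in the proof of Proposition \ref{commute}. The main obstacle is purely translational: the supporting lemmas are written in $(x,j)$ variables, with a summation index $i$ ranging over both parities, whereas the proposition is in $(2^{n-1},j)$ variables with a $2k$-indexed sum. One must carefully absorb the prefactor $(-1)^m 2^{2m(n-1)}/(2m)!$ and use Proposition \ref{oddZero} to extend the $k$-sum into an $i$-sum (the even-$i$ terms vanishing by that proposition). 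Once this dictionary is set up, the three lemmas slot in verbatim and the induction runs to completion.
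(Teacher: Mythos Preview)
Your proposal is correct and follows essentially the same route as the paper: induction on $m$ with base case $m=1$, the backward second-difference recurrence coming from Corollary~\ref{induct}, and the three verifications handled by Lemmas~\ref{firstterm}, \ref{lastterm}, \ref{middleterm} under the substitution $x=1/2^{n-1}$. You supply two things the paper leaves implicit: the uniqueness argument (the homogeneous second-difference system with the two boundary constraints forces the zero sequence) and the explicit dictionary between the $(x,i)$ variables of the lemmas and the $(2^{n-1},2k)$ variables of the proposition via Proposition~\ref{oddZero}.
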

\begin{proof}
The proof is by induction on $m$. Note that if we demonstrate the induction on $m$, we are done since we have previously proven the case $m=1$ is true for arbitrary $n>2$. The case of $m=1$ is presented in ref. 1. Assume true for $m$. Keep in mind, the matrix $M_n$ is completely determined by the first row. By the preceding corollary
\begin{align}
\frac{1}{(\sin(\frac{\pi}{2^n}))^{2(m+1)+1}}&=\frac{1}{\left(\sin(\frac{\pi}{2^n})\right)^2}\frac{1}{(\sin(\frac{\pi}{2^n}))^{2m+1}}\\\notag
&=2^{2m+1}\sum_{k=1}^{2^{n-2}}\biggl(\sum_{i=1}^k2(2i-1)M_n(1,i)+2(2k-1)\sum_{i=k+1}^{2^{n-2}}M_n(1,i)\biggr)\sin(\frac{(2k-1)\pi}{2^n})\\\notag
\end{align}
This tells us that the matrix $\bar{M_n}$ corresponding to the power $2m+3$ has first row $$\bar{M}_n(1,k)=\frac{1}{2}\bigl(\sum_{i=1}^k(2i-1)M_n(1,i)+(2k-1)\sum_{i=k+1}^{2^{n-2}}M_n(1,i))\bigr)$$
Alternatively, if the matrix $\bar{M_n}$ corresponds to the power $2m-1$, our  Corollary \ref{induct} suggests we show
\begin{align}
M_n(1,0)&=-M_n(1,1)\\\notag
M_n(1,2^{n-2})&=M_n(1,2^{n-2}+1)\\\notag
\bar{M}_{n}(1,j)&=\frac{1}{4}\biggl(-M_n(1,j-1)+2M_n(1,j)-M_n(1,j+1)\biggr)\\\notag
\end{align}
The first step is essentially Lemma \ref{firstterm}. The second is Lemma \ref{lastterm}. The third is Lemma \ref{middleterm}, where we substitute $\frac{1}{2^{n-1}}$ for $x$.

\end{proof}

\section{Sums Of Negative Odd Powers}

Lets write our example equations in a more revealing form.
\begin{align}\notag
S(3,n)&=\sum_{j=1}^{2^{n-2}}\biggl(2^{2n-2} 2\left(-\left(\frac{j}{2^{n-1}}\right)^2+\frac{j}{2^{n-1}} \right) +2^{n-1} 2\left(\frac{j}{2^{n-1}}-\frac{1}{2}\right)\biggr)\frac{1}{\sin[\frac{(2j-1)\pi}{2^n}]}\\\notag
S(5,n)&=\frac{1}{3}\sum_{j=1}^{2^{n-2}} \biggl(2^{4n-4}2\left(  \left(\frac{j}{2^{n-1}}\right)^4   -2\left(\frac{j}{2^{n-1}}\right)^3   +\frac{j}{2^{n-1}}   \right)\\\notag
&+2^{3n-3}(-4)\left(  \left(\frac{j}{2^{n-1}}\right)^3  -\frac{3}{2}\left(\frac{j}{2^{n-1}}\right)^2  +\frac{1}{4}  \right)\\\notag
&+2^{2n-2} 2\left(  -\left(\frac{j}{2^{n-1}}\right)^2  +\frac{j}{2^{n-1}}\right)+2^{n-1} 4\left(\frac{j}{2^{n-1}}-\frac{1}{2}\right)  \biggr)\frac{1}{\sin[\frac{(2j-1)\pi}{2^n}]}\\\notag
S(7,n)&=\frac{1}{45}\sum_{j=1}^{2^{n-2}}\biggl(2^{6n-6}  (-4)\left(   \left( \frac{j}{2^{n-1}}\right)^6-3\left(\frac{j}{2^{n-1}}\right)^5+5\left(\frac{j}{2^{n-1}}\right)^3-3\frac{j}{2^{n-1}}  \right)\\\notag
&+2^{5n-5}12\left(   \left( \frac{j}{2^{n-1}}\right)^5-\frac{5}{2}\left( \frac{j}{2^{n-1}}\right)^4+\frac{5}{2}\left( \frac{j}{2^{n-1}}\right)^2-\frac{1}{2}\right) \\\notag
&+2^{4n-4}20\left(  \left(\frac{j}{2^{n-1}}\right)^4   -2\left(\frac{j}{2^{n-1}}\right)^3   +\frac{j}{2^{n-1}}   \right)+2^{3n-3}(-60)\left(  \left(\frac{j}{2^{n-1}}\right)^3  -\frac{3}{2}\left(\frac{j}{2^{n-1}}\right)^2  +\frac{1}{4}  \right)\\\notag
&+2^{2n-2} 16\left(  -\left(\frac{j}{2^{n-1}}\right)^2  +\frac{j}{2^{n-1}}\right)+2^{n-1} 48\left(\frac{j}{2^{n-1}}-\frac{1}{2}\right)  \biggr)\frac{1}{\sin[\frac{(2j-1)\pi}{2^n}]}\\\notag
\end{align}

These are the Euler polynomials. Let $E_n(x)$ be the $n$-th Euler polynomial. Then these equations can be written

\begin{align}\notag
S(3,n)&=2\sum_{j=1}^{2^{n-2}}\biggl(2^{2n-2}(-1) E_2\left(\frac{j}{2^{n-1}}\right) +2^{n-1} E_1\left(\frac{j}{2^{n-1}}\right)\biggr)\frac{1}{\sin[\frac{(2j-1)\pi}{2^n}]}\\\notag
S(5,n)&=\frac{2}{3}\sum_{j=1}^{2^{n-2}} \biggl(2^{4n-4}E_4\left(\frac{j}{2^{n-1}}\right)+2^{3n-3}(-2)E_3\left(\frac{j}{2^{n-1}}\right)\\\notag
&+2^{2n-2} (-1)E_2\left(\frac{j}{2^{n-1}}\right)+2^{n-1} 2E_1\left(\frac{j}{2^{n-1}}\right) \biggr)\frac{1}{\sin[\frac{(2j-1)\pi}{2^n}]}\\\notag
S(7,n)&=\frac{4}{45}\sum_{j=1}^{2^{n-2}}\biggl(2^{6n-6}  (-1)E_6\left(\frac{j}{2^{n-1}}\right)+2^{5n-5}3E_5\left(\frac{j}{2^{n-1}}\right)\\\notag
&+2^{4n-4}5E_4\left(\frac{j}{2^{n-1}}\right)+2^{3n-3}(-15)E_3\left(\frac{j}{2^{n-1}}\right)\\\notag
&+2^{2n-2}(- 4)E_2\left(\frac{j}{2^{n-1}}\right)+2^{n-1} 12E_1\left(\frac{j}{2^{n-1}}\right)  \biggr)\frac{1}{\sin[\frac{(2j-1)\pi}{2^n}]}\\\notag
S(9,n)&=\frac{2}{315}\sum_{j=1}^{2^{n-2}}\biggl(2^{8n-8}  E_8\left(\frac{j}{2^{n-1}}\right)+2^{7n-7}(-4)E_7\left(\frac{j}{2^{n-1}}\right)\\\notag
&+2^{6n-6}  (-14)E_6\left(\frac{j}{2^{n-1}}\right)+2^{5n-5}56E_5\left(\frac{j}{2^{n-1}}\right)\\\notag
&+2^{4n-4}49E_4\left(\frac{j}{2^{n-1}}\right)+2^{3n-3}(-196)E_3\left(\frac{j}{2^{n-1}}\right)\\\notag
&+2^{2n-2}(- 36)E_2\left(\frac{j}{2^{n-1}}\right)+2^{n-1} 144E_1\left(\frac{j}{2^{n-1}}\right)  \biggr)\frac{1}{\sin[\frac{(2j-1)\pi}{2^n}]}\\\notag
S(11,n)&=\frac{4}{14175}\sum_{j=1}^{2^{n-2}}\biggl(2^{10n-10} (-1) E_{10}\left(\frac{j}{2^{n-1}}\right)+2^{9n-9}5E_9\left(\frac{j}{2^{n-1}}\right)\\\notag
&+2^{8n-8}30  E_8\left(\frac{j}{2^{n-1}}\right)+2^{7n-7}(-150)E_7\left(\frac{j}{2^{n-1}}\right)\\\notag
&+2^{6n-6}  (-273)E_6\left(\frac{j}{2^{n-1}}\right)+2^{5n-5}1365 E_5\left(\frac{j}{2^{n-1}}\right)\\\notag
&+2^{4n-4}820 E_4\left(\frac{j}{2^{n-1}}\right)+2^{3n-3}(-4100)E_3\left(\frac{j}{2^{n-1}}\right)\\\notag
&+2^{2n-2}(- 576)E_2\left(\frac{j}{2^{n-1}}\right)+2^{n-1} 2880 E_1\left(\frac{j}{2^{n-1}}\right)  \biggr)\frac{1}{\sin[\frac{(2j-1)\pi}{2^n}]}\\\notag
S(13,n)&=\frac{4}{467775}\sum_{j=1}^{2^{n-2}}\biggl(2^{12n-12} E_{12}\left(\frac{j}{2^{n-1}}\right)+2^{11n-11}(-6)E_{11}\left(\frac{j}{2^{n-1}}\right)\\\notag
&+2^{10n-10} (-55) E_{10}\left(\frac{j}{2^{n-1}}\right)+2^{9n-9}330E_9\left(\frac{j}{2^{n-1}}\right)\\\notag
&+2^{8n-8}1023  E_8\left(\frac{j}{2^{n-1}}\right)+2^{7n-7}(-6138)E_7\left(\frac{j}{2^{n-1}}\right)\\\notag
&+2^{6n-6}  (-7645)E_6\left(\frac{j}{2^{n-1}}\right)+2^{5n-5}45870 E_5\left(\frac{j}{2^{n-1}}\right)\\\notag
&+2^{4n-4}21076 E_4\left(\frac{j}{2^{n-1}}\right)+2^{3n-3}(-126456)E_3\left(\frac{j}{2^{n-1}}\right)\\\notag
&+2^{2n-2}(- 14400)E_2\left(\frac{j}{2^{n-1}}\right)+2^{n-1} 86400 E_1\left(\frac{j}{2^{n-1}}\right)  \biggr)\frac{1}{\sin[\frac{(2j-1)\pi}{2^n}]}\\\notag
\end{align}
The general form looks like
\begin{align}
S(2m+1,n)&=\frac{2^{2m}}{(2m)!}\sum_{j=1}^{2^{n-2}}\biggl( (-1)^m 2^{(n-1)2m }E_{2m}(\frac{j}{2^{n-1}})+\\
&+(-1)^{m+1}2^{(n-1)(2m-1)}j E_{2m-1}(\frac{j}{2^{n-1}})+\\\notag
&+(-1)^{m+1}2^{(n-1)(2m-2)}\frac{1}{6}(m-1)j(2m-1)E_{2m-2}(\frac{j}{2^{n-1}})+\\\notag
&+(-1)^{m}2^{(n-1)(2m-3)}\frac{1}{6}(m-1)j^2(2m-1)E_{2m-3}(\frac{j}{2^{n-1}})+\\\notag
&+(-1)^{m}2^{(n-1)(2m-4)}\frac{1}{360}m(m-1)(m-2)(2m-1)(2m-3)(5m+1)E_{2m-4}(\frac{j}{2^{n-1}})+\\\notag
&+(-1)^{m+1}2^{(n-1)(2m-5)}\frac{1}{360}m^2(m-1)(m-2)(2m-1)(2m-3)(5m+1)E_{2m-5}(\frac{j}{2^{n-1}})+\\\notag
&+(-1)^{m+1}2^{(n-1)(2m-6)}\frac{1}{45360}m(m-1)(m-2)(m-3)(2m-1)(2m-3)(2m-5)\\\notag
&(35m^2+21m+4)E_{2m-6}(\frac{j}{2^{n-1}})+\\\notag
&+(-1)^{m}2^{(n-1)(2m-7)}\frac{1}{45360}j^2(m-1)(m-2)(m-3)(2m-1)(2m-3)(2m-5)\\\notag
&(35m^2+21m+4)E_{2m-7}(\frac{j}{2^{n-1}})+\\\notag
&+(-1)^{m}2^{(n-1)(2m-8)}\frac{1}{5443200}m(m-1)(m-2)(m-3)(m-4)(2m-1)(2m-3)(2m-5)\\\notag
&(2m-7)(5m+2)(35m^2+28m+9)E_{2m-8}(\frac{j}{2^{n-1}})+\\\notag
&+(-1)^{m+1}2^{(n-1)(2m-9)}\frac{1}{5443200}m^2(m-1)(m-2)(m-3)(m-4)(2m-1)(2m-3)\\\notag
&(2m-5)(2m-7)(5m+2)(35m^2+28m+9)E_{2m-9}(\frac{j}{2^{n-1}})+\\
+\ldots\biggr)\frac{1}{\sin[\frac{(2j-1)\pi}{2^n}]}\\
\end{align}

\begin{theorem}\label{last}
Let $E_j(x)$ be the j-th Euler polynomial and $j\in\mathbb{N}$ then
\begin{align}
S(2m+1,n)&=(-1)^m\frac{2^{2mn}}{(2m)!}\sum_{j=1}^{2^{n-2}}\sum_{k=0}^{m-1}\Biggl(\frac{1}{2^{(n-1)(2k+1)}}\binom{2m-1}{2k}\\\notag
&\Biggl(B_{2k}^{(2m)}(m)\Biggr)\Biggl(2^{n-1}E_{2m-2k}\left(\frac{j}{2^{n-1}}\right)-mE_{2m-2k-1}\left(\frac{j}{2^{n-1}}\right)\Biggr)\Biggr)\frac{1}{\sin[\frac{(2j-1)\pi}{2^n}]}\\\notag
\end{align}
Equivalently,
\begin{align}
S(2m+1,n)&=(-1)^m\frac{1}{(2m)!}\sum_{j=1}^{2^{n-2}}\Biggl((j-m)B_{2m-1}^{(2m)}(m+j)+2^{2mn}\sum_{k=0}^{m-1}E_{2m-2k-1}(0)\frac{1}{2^{(n-1)(2k+1)}}\\\notag
&\Biggl(\binom{2m-1}{2k+1}B_{2k+1}^{(2m)}(m+j)+(j-m)\binom{2m-1}{2k}B_{2k}^{(2m)}(m+j)\Biggr)\Biggr)\frac{1}{\sin[\frac{(2j-1)\pi}{2^n}]}\\\notag
\end{align}
\end{theorem}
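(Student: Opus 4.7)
The plan is to assemble Theorem \ref{last} from three building blocks that are already in place: Proposition \ref{Mmatrix} (the closed form for the top row $M_n(1,j)$), Proposition \ref{commute} (which collapses $\sum_j 1/\sin^{2m+1}$ into a single row of $M_n$ paired with $1/\sin$), and Lemma \ref{prepprep} (the generating-function identity that converts the Euler-polynomial form into the generalized-Bernoulli form).

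First I would apply Proposition \ref{commute} with $r = 2m+1$ to write
$$S(2m+1,n) = 2^{2m}\sum_{j=1}^{2^{n-2}} M_n(1,j)\,\frac{1}{\sin\!\left(\frac{(2j-1)\pi}{2^n}\right)}.$$
Plugging in the closed form for $M_n(1,j)$ from Proposition \ref{Mmatrix} and combining scalar prefactors then yields the first form almost immediately: the $2^{2m}$ from Proposition \ref{commute} times the $2^{2m(n-1)}$ from Proposition \ref{Mmatrix} produces the outer $2^{2mn}/(2m)!$, and the interior $2^{(n-1)(-2k-1)}$ in Proposition \ref{Mmatrix} is precisely the $1/2^{(n-1)(2k+1)}$ appearing in the theorem. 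No further identity is needed at this stage.

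For the equivalent second form, I would invoke Lemma \ref{prepprep} with the specific choice $x = 1/2^{n-1}$, so that the Euler-polynomial arguments $jx$ become $j/2^{n-1}$, matching the theorem. Under this substitution, only the odd-$i$ terms survive on the lemma's LHS (by Proposition \ref{oddZero}), and after reindexing $i = 2m - 2k - 1$ the LHS becomes exactly $2^{2m(n-1)}$ times the inner sum of the first form. Converting to the RHS of Lemma \ref{prepprep}, and performing the inverse reindexing $k \mapsto m-1-k$ to align the Bernoulli subscripts with those in the theorem, turns the inner sum into the $E_{2m-2k-1}(0)$-weighted combination of $B_{2k+1}^{(2m)}(m+j)$ and $B_{2k}^{(2m)}(m+j)$ stated there, together with the boundary piece $(j-m)\,B_{2m-1}^{(2m)}(m+j)$ that emerges from the $i=0$ contribution through Proposition \ref{xplusy}. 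Dividing by the common $2^{2m(n-1)}$ factor and redistributing the outer $2^{2mn}/(2m)!$ produces the second form.

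The main obstacle is the exponent bookkeeping at the Lemma \ref{prepprep} step: with $x = 1/2^{n-1}$, the factor $x^{-2k-1}$ on the lemma's RHS becomes $2^{(n-1)(2k+1)}$, and after the reindexing this has to pair correctly against the outer $2^{2mn}$ to reconstitute the $1/2^{(n-1)(2k+1)}$ weight in the theorem; the reindexing must be tracked in parallel through the binomials, the Bernoulli subscripts, and the Euler-polynomial subscripts so that all three sums align term by term. As a sanity check I would specialize to $m=1,2,3$ and compare against the closed forms for $S(3,n)$, $S(5,n)$, $S(7,n)$ displayed in the opening proposition and in the Euler-polynomial tabulation immediately preceding the theorem.
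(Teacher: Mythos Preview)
Your proposal is correct and follows the same route as the paper: the paper's own proof consists of exactly the two lines you wrote first, namely invoking Proposition~\ref{commute} with $r=2m+1$ and substituting the closed form from Proposition~\ref{Mmatrix}. Your treatment of the second ``equivalent'' form via Lemma~\ref{prepprep} with $x=1/2^{n-1}$ and the reindexing $k\mapsto m-1-k$ is more detailed than what the paper supplies (the paper does not spell out the passage to the second form at all in the theorem's proof), but it is the intended mechanism, since Lemma~\ref{prepprep} was set up precisely for this purpose.
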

\begin{proof}
We know from Prop. \ref{commute}
 $$S(2m+1,n)=2^{2m} \sum_{j=1}^{2^{n-2}}M_n(1,j)\frac{1}{\sin(\frac{(2j-1)\pi}{2^n})}$$
where $M_n$ is the same matrix appearing in Prop. \ref{Mmatrix}.
\end{proof}
\begin{corollary}
Let $E_j(x)$ be the j-th Euler polynomial and $j\in\mathbb{N}$ then
\begin{align}
\zeta(2j+1)&=(-1)^j\frac{2^{2j}\pi^{2j+1}}{(2j)!(2^{2j+1}-1)}\int_{0}^{1/2}E_{2j}(x)\csc (\pi  x)dx\\\notag
\end{align}
\end{corollary}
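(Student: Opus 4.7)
The plan is to combine Theorem \ref{last} with the limit representation of $\zeta(2m+1)$ given in the introduction,
\[
\zeta(2m+1)=\lim_{n\to\infty}\frac{\pi^{2m+1}}{2^{2m+1}-1}\cdot\frac{1}{2^{n-1}}\cdot\frac{1}{2^{2m(n-1)}}\,S(2m+1,n),
\]
and to recognize the resulting limit as a Riemann sum on $[0,1/2]$ with mesh $\Delta x=1/2^{n-1}$ and sample points $x_{j}=j/2^{n-1}$. Substituting the closed form for $S(2m+1,n)$ from Theorem \ref{last} and combining powers of $2$ gives an expression of the form
\[
\zeta(2m+1)=\frac{(-1)^{m}\pi^{2m+1}}{(2m)!(2^{2m+1}-1)}\cdot 2^{2m}\lim_{n\to\infty}\sum_{k=0}^{m-1}\frac{1}{2^{2k(n-1)}}\binom{2m-1}{2k}B_{2k}^{(2m)}(m)\,T_{k}(n),
\]
where $T_{k}(n)$ is the inner Riemann-type sum over $j$ of the bracketed Euler polynomial expressions divided by $\sin\!\big((2j-1)\pi/2^{n}\big)$, rescaled by $1/2^{n-1}$.

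I would then show that only the $k=0$ summand contributes in the limit. For $k\ge 1$, the prefactor $1/2^{2k(n-1)}$ decays exponentially, while the inner sum $T_{k}(n)$ remains bounded (it converges to a fixed integral), so each such term vanishes as $n\to\infty$. Within the $k=0$ summand, the two pieces are
\[
\frac{1}{2^{n-1}}\sum_{j=1}^{2^{n-2}}E_{2m}(x_{j})\csc\!\big(\pi(2j-1)/2^{n}\big)
\quad\text{and}\quad
\frac{m}{2^{2(n-1)}}\sum_{j=1}^{2^{n-2}}E_{2m-1}(x_{j})\csc\!\big(\pi(2j-1)/2^{n}\big),
\]
and the second piece carries an extra factor $1/2^{n-1}$ relative to a bona fide Riemann sum, so it vanishes in the limit. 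The first piece is a midpoint-shifted Riemann sum for $\int_{0}^{1/2}E_{2m}(x)\csc(\pi x)\,dx$, since $\pi(2j-1)/2^{n}=\pi x_{j}-\pi/2^{n}\to\pi x_{j}$.

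The main technical obstacle is justifying convergence of this Riemann sum, because the kernel $\csc(\pi x)$ is singular at $x=0$. The resolution is the identity $E_{2m}(0)=0$ for $m>0$ listed in the opening proposition on Euler polynomials: the integrand $E_{2m}(x)\csc(\pi x)$ extends to a continuous, and indeed real-analytic, function on $[0,1/2]$, so the Riemann sums converge to the integral. One minor check needed here is that the single boundary term at $j=1$, where the sampled $\csc$ is of order $2^{n}$, behaves like $E_{2m}(1/2^{n-1})\cdot 2^{n}/\pi=O(1)$ and thus contributes nothing singular after multiplication by $\Delta x=1/2^{n-1}$. Collecting the surviving contribution gives
\[
\zeta(2m+1)=(-1)^{m}\frac{2^{2m}\pi^{2m+1}}{(2m)!(2^{2m+1}-1)}\int_{0}^{1/2}E_{2m}(x)\csc(\pi x)\,dx,
\]
which is the claimed identity (with $j$ in place of $m$).
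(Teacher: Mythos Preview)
Your proposal is correct and follows essentially the same route as the paper: substitute Theorem~\ref{last} into the limit representation of $\zeta(2m+1)$, observe that only the $k=0$ term survives because of the decaying prefactors, and read off the remaining expression as a Riemann sum for $\int_0^{1/2}E_{2m}(x)\csc(\pi x)\,dx$. In fact you supply more justification than the paper does, explicitly invoking $E_{2m}(0)=0$ to handle the apparent singularity of $\csc(\pi x)$ at $x=0$ and checking the boundary contribution at $j=1$; the paper simply asserts that ``only the high order terms will remain when passing to the limit.''
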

\begin{proof}
Note, to use these formulas for zeta at odd integers to produce formulas involving integrals, we need to write them in the form $$\lim\sum \Delta x_i f(x_i)$$ For $S(m,n)$ this amounts to letting $\Delta x=\frac{1}{2^{n-1}}$, and distributing the $\frac{1}{2^{(m-1)(n-1)}}$ across the terms. The variable effectively becomes $\frac{j}{2^{n-1}}$.\\ As shown in the general form given before Theorem \ref{last}, when the sum is written in the form of an approximation to an integral, only the high order terms will remain when passing to the limit, those being $E_{2j}(x)$.
We have,
\begin{align}
\zeta(m)&=\lim_{n\to\infty}\left(\frac{\pi^m}{2^{m}-1}\right)\left(\frac{1}{2^{n-1}}\right) \left(\frac{1}{2^{(m-1)(n-1)}}\right)S(m,n) \\\notag
\implies &\\\notag
\zeta(2m+1)&=\lim_{n\to\infty}\left(\frac{\pi^{2m+1}}{2^{2m+1}-1}\right)\left(\frac{1}{2^{n-1}}\right) \left(\frac{1}{2^{(2m)(n-1)}}\right)S(2m+1,n) \\\notag
&=\lim_{n\to\infty}\left(\frac{\pi^{2m+1}}{2^{2m+1}-1}\right)\left(\frac{1}{2^{n-1}}\right) \left(\frac{1}{2^{(2m)(n-1)}}\right)(-1)^m\frac{2^{2mn}}{(2m)!}\sum_{j=1}^{2^{n-2}}\sum_{k=0}^{m-1}\Biggl(2^{(n-1)(-2k-1)}\binom{2m-1}{2k}\\\notag
&\Biggl(B_{2k}^{(2m)}(m)\Biggr)\Biggl(2^{n-1}E_{2m-2k}\left(\frac{j}{2^{n-1}}\right)-mE_{2m-2k-1}\left(\frac{j}{2^{n-1}}\right)\Biggr)\Biggr)\frac{1}{\sin[\frac{(2j-1)\pi}{2^n}]}\\\notag
\zeta(2m+1)&=\lim_{n\to\infty}\left(\frac{\pi^{2m+1}}{2^{2m+1}-1}\right)\left(\frac{1}{2^{n-1}}\right) \left(\frac{1}{2^{(2m)(n-1)}}\right)S(2m+1,n) \\\notag
&=\lim_{n\to\infty}\left(\frac{\pi^{2m+1}}{2^{2m+1}-1}\right)\Delta x_n\left(\frac{1}{2^{(2m)(n-1)}}\right)(-1)^m\frac{2^{2mn}}{(2m)!}\sum_{j=1}^{2^{n-2}}\sum_{k=0}^{m-1}\Biggl(2^{(n-1)(-2k-1)}\binom{2m-1}{2k}\\\notag
&\Biggl(B_{2k}^{(2m)}(m)\Biggr)\Biggl(2^{n-1}E_{2m-2k}\left(x_j\right)-mE_{2m-2k-1}\left(x_j\right)\Biggr)\Biggr)\frac{1}{\sin[(x_j-\frac{1}{2^n})\pi]}\\\notag
\end{align}
In the limit as $n\to\infty$ the only term in the sum over $k$ that will survive is the $k=0$ term. $B_0^{(2m)}(m)=1$.
\begin{align}
\zeta(2m+1)&=\lim_{n\to\infty}\left(\frac{\pi^{2m+1}}{2^{2m+1}-1}\right)\Delta x_n(-1)^m\frac{2^{2m}}{(2m)!}\sum_{j=1}^{2^{n-2}}E_{2m}(x_j)\frac{1}{\sin[x_j\pi]}\\\notag
&=(-1)^m\frac{2^{2m}\pi^{2m+1}}{(2m)!(2^{2m+1}-1)}\int_{0}^{1/2}E_{2m}(x)\csc (\pi  x)dx\\\notag
\end{align}
\end{proof}
\section{Sums Of Negative Even Powers}

As previously noted, if we let
\begin{align}\notag
S(s,n)&=\sum_{j=1}^{2^{n-2}}\frac{1}{\sin^s\left(\frac{(2j-1)\pi}{2^n}\right)}\\\notag
\end{align}
Then
\begin{align}\notag
S(2,n)&=\frac{1}{2}2^{2n-2}\\\notag
S(4,n)&=\frac{1}{6}\left(2^{4n-4}+2(2^{2n-2})\right)\\\notag
S(6,n)&=\frac{1}{30}(2(2^{6n-6})+5(2^{4n-4})+8(2^{2n-2}))\\\notag
S(8,n)&=\frac{1}{630}\left(17(2^{8n-8})+56(2^{6n-6})+98(2^{4n-4})+144(2^{2n-2})\right)\\\notag
\end{align}
We can see a pattern if we write these as
\begin{align}\notag
S(2,n)&=\frac{1}{2}2^{2n-2}\\\notag
S(4,n)&=\frac{1}{2}\frac{1}{3}2^{4n-4}+\frac{2}{6}2^{2n-2}\\\notag
S(6,n)&=\frac{1}{2}\frac{2}{15}2^{6n-6}+\frac{3}{6}\frac{1}{3}2^{4n-4}+\frac{4}{15}2^{2n-2}\\\notag
S(8,n)&=\frac{1}{2}\frac{17}{315}2^{8n-8}+\frac{4}{6}\frac{2}{15}2^{6n-6}+\frac{7}{15}\frac{1}{3}2^{4n-4}+\frac{8}{35}2^{2n-2}\\\notag
S(10,n)&=\frac{1}{2}\frac{62}{2835}2^{10n-10}+\frac{5}{6}\frac{17}{315}2^{8n-8}+\frac{13}{18}\frac{2}{15}2^{6n-6}+\frac{82}{189}\frac{1}{3}2^{4n-4}+\frac{64}{315}2^{2n-2}\\\notag
S(12,n)&=\frac{1}{2}\frac{1382}{155925}2^{12n-12}+\frac{6}{6}\frac{62}{2835}2^{10n-10}+\frac{31}{30}\frac{17}{315}2^{8n-8}+\frac{139}{189}\frac{2}{15}2^{6n-6}+\frac{1916}{4725}\frac{1}{3}2^{4n-4}+\frac{128}{693}2^{2n-2}\\\notag
\end{align}
We can recognize the terms in columns as corresponding to the coefficients of the Taylor series expanson of certain functions.
\begin{align}\notag
S(2 k,n)&=\sum_{j=1}^k(-1)^{k+j}\frac{1}{2(2(k-j))!)}\Bigl(\lim_{t\to 0}\frac{  \mathrm{  d^{2(k-j)}  }   }{\mathrm{d}t^{2(k-j)}  }        \Bigl(\frac{t}{\sinh(t)}\Bigr)^x\vert_{x=2 k}\Bigr)\Bigl(\frac{1}{(2j)!}\frac{\mathrm{  d^{2j}  }  }{\mathrm{d}t^{2j}}  t\tan(t)\vert_{t=0}\Bigr)2^{2 j (n-1)}\\\notag
\end{align}
This looks like the terms in the $2k$-th derivative of a product. We can construct such a product:
\begin{align}
S(2k,n)&=\lim_{t\to 0}\frac{  \mathrm{  d^{2k}  }   }{\mathrm{d}t^{2k}  }  \Bigl( \frac{(-1)^{k+1}}{2(2k)!}     \Bigl(\frac{ t}{\sinh(t/2^{n-1})}\Bigr)^{2k} t\tanh(t)\Bigr)\\\notag
\end{align}

\begin{theorem}\label{theorem1}
 Let $B(j)$ be the $j$-th Bernoulli number and let $B_n^{(\alpha)}(x)$ be the generalized Bernoulli polynomial of degree $n$ in $x$.
\begin{align}\notag
S(2 k,n)&=\sum_{j=1}^k(-1)^{k+1}\frac{2^{2j(n-1)+2k-1}(2^{2j}-1)}{(2(k-j))!(2j)!}B_{2(k-j)}^{(2k)}(k) B(2 j)\\\notag
&=\sum_{j=1}^k(-1)^{j+k}\frac{2^{2j(n-2)+2k}(2^{2j}-1)}{\pi^{2j}(2(k-j))!}B_{2(k-j)}^{(2k)}(k) \zeta(2 j)\\\
\end{align}
Equivalently,
\begin{align}
S(2 k,n)&=(-1)^{k+1}\frac{2^{2k-1}}{(2k)!}\sum_{j=1}^k2^{2jn}\frac{2^{2j}-1}{2^{2j}}\binom{2k}{2j}B_{2(k-j)}^{(2k)}(k) B(2 j)\\\label{sumsine}
\end{align}
\end{theorem}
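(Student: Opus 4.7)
The plan is to apply the Leibniz product rule to the identity displayed just above the theorem,
$$S(2k,n) \;=\; \frac{(-1)^{k+1}}{2(2k)!}\lim_{t\to 0}\frac{d^{2k}}{dt^{2k}}\left[\left(\frac{t}{\sinh(t/2^{n-1})}\right)^{2k} t\tanh(t)\right],$$
and then evaluate each factor in the resulting derivative at $t=0$ using the closed forms already collected in Proposition~2.1. Set $f(t)=(t/\sinh(t/2^{n-1}))^{2k}$ and $g(t)=t\tanh(t)$; both are even functions of $t$, and $g(0)=0$, so in the Leibniz expansion only pairs of derivative orders $(2(k-j),2j)$ with $j=1,\dots,k$ survive, each contributing the binomial factor $\binom{2k}{2(k-j)}=\binom{2k}{2j}$.

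For $f$, rescale by $s=t/2^{n-1}$ to write $f(t)=2^{2k(n-1)}(s/\sinh s)^{2k}$; the chain rule together with the identity $2^{2m}B_{2m}^{(2x)}(x)=\lim_{t\to 0}\tfrac{d^{2m}}{dt^{2m}}(t/\sinh t)^{2x}$ from Proposition~2.1, specialized at $x=k$, yields
$$f^{(2(k-j))}(0)\;=\;2^{2k(n-1)-2(k-j)(n-1)}\cdot 2^{2(k-j)}\,B_{2(k-j)}^{(2k)}(k)\;=\;2^{2j(n-1)+2(k-j)}\,B_{2(k-j)}^{(2k)}(k).$$
For $g$, the Taylor series $\tanh t=\sum_{n\ge 1}\tfrac{2^{2n}(2^{2n}-1)B(2n)}{(2n)!}\,t^{2n-1}$, which follows from the $\tan$ identity in Proposition~2.1 under $\tanh t=-i\tan(it)$ (the sign alternations cancel), gives $g^{(2j)}(0)=2^{2j}(2^{2j}-1)B(2j)$. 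Assembling the Leibniz sum, the powers of $2$ combine to $2^{2j(n-1)+2k}$ since $2(k-j)+2j=2k$, and multiplying by $\tfrac{(-1)^{k+1}}{2(2k)!}$ with $\binom{2k}{2j}=\tfrac{(2k)!}{(2j)!(2(k-j))!}$ reproduces the first displayed form of the theorem exactly.

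The remaining two forms are algebraic. Substituting the Euler identity $(-1)^{j+1}\tfrac{2^{2j}}{(2j)!}B(2j)=\tfrac{2}{\pi^{2j}}\zeta(2j)$ from Proposition~2.1 into the first form produces the $\zeta$-form and accounts for the extra $(-1)^{j}$ factor. Multiplying and dividing by $(2k)!$ to restore the binomial coefficient, pulling out a single $2^{2k-1}$, and rewriting $2^{2j(n-1)}(2^{2j}-1)=2^{2jn}(2^{2j}-1)/2^{2j}$ yields the binomial-coefficient form.

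The main obstacle is justifying the product identity for $S(2k,n)$ that starts the argument: the body of the paper motivates it only by matching Taylor coefficients against the explicit evaluations $S(2,n),\dots,S(12,n)$ of Proposition~1.1, and a rigorous derivation needs an independent input. The natural route is induction on $k$ built from Corollary~\ref{induct}, which encodes how multiplication by $\sin^2(\pi/2^n)$ acts on a sine-basis expansion, combined with Lemma~\ref{oddeven}, which is precisely the cross-identity converting the $B(2j)$-sum on one side into the Euler-polynomial sum that appears for odd powers in Theorem~\ref{last}. Once that product identity is granted, the Leibniz calculation above is mechanical.
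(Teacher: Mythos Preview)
Your Leibniz expansion is correct, but it is not a proof: the product identity you start from is \emph{equivalent} to the theorem, since expanding the $2k$-th derivative by Leibniz and plugging in the Taylor coefficients from Proposition~2.1 recovers exactly the stated sum. So Steps~1--3 of your argument just rewrite the claim, and all of the content sits in the gap you flag in your final paragraph.

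The paper does not close that gap by proving the product identity. The generating-function display before the theorem is heuristic only, obtained by pattern-matching against the small cases. The actual proof goes through the odd-power machinery: Proposition~\ref{Mmatrix} gives the closed form for the first-row entries $M_n(1,j)$ of the odd-power matrix (this is where the real induction, via Lemmas~\ref{firstterm}--\ref{middleterm}, lives); multiplying the first-row relation for power $2m+1$ by $\sin(\pi/2^n)$ and reading off the $\cos(0)$ coefficient isolates $2^{2m}M_n(1,1)$; summing over rows and using the column-cancellation lemma ties this to $S(2m,n)$; finally Proposition~\ref{lastprop}, which is exactly Lemma~\ref{oddeven}, evaluates $M_n(1,1)$ as the Bernoulli-number sum in the theorem. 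In other words, the even-power formula is deduced \emph{from} the odd-power Theorem~\ref{last}, with Lemma~\ref{oddeven} as the conversion identity---you named the right lemma, but its role is to finish the proof directly, not to justify the hyperbolic product as an intermediate object.

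Your suggested route of induction on $k$ via Corollary~\ref{induct} would rebuild the odd-power matrix recursion, which the paper already carries out in Proposition~\ref{Mmatrix}; it would not give the product identity without first producing the theorem's formula and then reversing your Leibniz computation. So the proposal, as it stands, has the logical dependency inverted.
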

\begin{proof}
See Prop. \ref{lastprop} and the comments preceding the proposition.
\end{proof}
Here's what this looks like.
\begin{align}\notag
S(2,n)&=\frac{3}{\pi^2}\zeta(2)2^{2n-2}\\\notag
S(4,n)&=\frac{15}{\pi^4}\zeta(4)2^{4n-4}+\frac{2}{\pi^2}\zeta(2)2^{2n-2}\\\notag
S(6,n)&=\frac{63}{\pi^6}\zeta(6)2^{6n-6}+\frac{15}{\pi^4}\zeta(4)2^{4n-4}+\frac{8}{5\pi^2}\zeta(2)2^{2n-2}\\\notag
S(8,n)&=\frac{255}{\pi^8}\zeta(8)2^{8n-8}+\frac{84}{\pi^6}\zeta(6)2^{6n-6}+\frac{14}{\pi^4}\zeta(4)2^{4n-4}+\frac{48}{35\pi^2}\zeta(2)2^{2n-2}\\\notag
S(10,n)&=\frac{1023}{\pi^{10}}\zeta(10)2^{10n-10}+\frac{425}{\pi^8}\zeta(8)2^{8n-8}+\frac{91}{\pi^6}\zeta(6)2^{6n-6}+\frac{820}{63\pi^4}\zeta(4)2^{4n-4}+\frac{128}{105\pi^2}\zeta(2)2^{2n-2}\\\notag
S(12,n)&=\frac{4095}{\pi^{12}}\zeta(12)2^{12n-12}+\frac{2046}{\pi^{10}}\zeta(10)2^{10n-10}+\frac{527}{\pi^8}\zeta(8)2^{8n-8}+\frac{278}{3\pi^6}\zeta(6)2^{6n-6}+\frac{3832}{315\pi^4}\zeta(4)2^{4n-4}+\\\notag
&+\frac{256}{231\pi^2}\zeta(2)2^{2n-2}\\\notag
\end{align}
\begin{align}\notag
S(2k,n)&=\frac{2^{2k}-1}{\pi^{2k}}\zeta(2k)2^{2k(n-1)}+{\rm lower \ order\  powers\ of\  2}\\\notag
\end{align}
Note that computing $S(6,1000)$ from the definition involves performing $2^{998}$ additions, whereas using this formula involves 3 additions.
We know the following:
\begin{proposition}
Sum over odd values:\\
Assume $m\in\mathbb{Z}$, $m>2$ then
$$\zeta(m)=\lim_{n\to\infty}\left(\frac{2^m\pi^m}{2^{m}-1}\right) \sum _{i=1}^{2^{n-2}}\frac{1}{ \left(2^n\sin\left(\frac{\left(2 i-1\right)\pi}{2^n}\right)\right)^m} $$\notag\\
\end{proposition}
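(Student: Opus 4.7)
The plan is to recognize the inner sum as a truncated partial sum of the series $\sum_{i\ge 1}1/((2i-1)\pi)^m$ and then to interchange the limit with the summation by means of Tannery's theorem (the discrete analogue of dominated convergence).

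The first step is to establish the pointwise limit of the general term. For each fixed $i\ge 1$ the argument $(2i-1)\pi/2^n$ tends to $0$, so
\begin{align}\notag
\lim_{n\to\infty}2^n\sin\!\left(\frac{(2i-1)\pi}{2^n}\right)&=(2i-1)\pi,
\end{align}
and consequently the $i$-th summand converges to $1/((2i-1)\pi)^m$.

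The second step is to produce a summable dominant. Using Jordan's inequality $\sin x\ge 2x/\pi$ on $[0,\pi/2]$, which applies because $(2i-1)\pi/2^n<\pi/2$ for every $i\le 2^{n-2}$, one gets
\begin{align}\notag
2^n\sin\!\left(\frac{(2i-1)\pi}{2^n}\right)&\ge 2^n\cdot\frac{2}{\pi}\cdot\frac{(2i-1)\pi}{2^n}=2(2i-1),
\end{align}
so the general term is bounded uniformly in $n$ by $1/(2(2i-1))^m$. Since $m>2>1$, the comparison series $\sum_{i\ge 1}1/(2i-1)^m=(1-2^{-m})\zeta(m)$ converges, supplying the required dominant.

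With these ingredients in hand, Tannery's theorem gives
\begin{align}\notag
\lim_{n\to\infty}\sum_{i=1}^{2^{n-2}}\frac{1}{\left(2^n\sin\left(\frac{(2i-1)\pi}{2^n}\right)\right)^m}&=\sum_{i=1}^{\infty}\frac{1}{((2i-1)\pi)^m}=\frac{1}{\pi^m}\left(1-\frac{1}{2^m}\right)\zeta(m)=\frac{2^m-1}{2^m\pi^m}\,\zeta(m),
\end{align}
and multiplying through by $2^m\pi^m/(2^m-1)$ recovers the stated formula. The only genuinely nontrivial point is the uniform bound that legitimises the interchange of limit and sum (since the number of summands depends on $n$, mere pointwise convergence is not enough); Jordan's inequality settles this cleanly, and the rest is bookkeeping.
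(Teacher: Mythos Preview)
Your argument is correct: the pointwise limit is clear, Jordan's inequality gives a uniform bound $1/(2(2i-1))^m$ valid for all $1\le i\le 2^{n-2}$ since $(2i-1)\pi/2^n\in(0,\pi/2)$ throughout that range, and Tannery's theorem then justifies exchanging the limit with the (variable-length) sum. The identity $\sum_{i\ge 1}(2i-1)^{-m}=(1-2^{-m})\zeta(m)$ finishes it.

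As for comparison: the paper does not prove this proposition in the text at all---its entire proof is the line ``See ref.\ [1],[2].'' So there is nothing to compare your approach against within the present paper. What you have supplied is a short, self-contained, and entirely elementary proof, which is arguably preferable to an external citation. One small remark: your hypothesis $m>2$ is stronger than you need; the argument goes through verbatim for any integer $m\ge 2$ (indeed any real $m>1$), since all that is required is the convergence of $\sum(2i-1)^{-m}$.
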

\begin{proof}
See ref. [1],[2].
\end{proof}
Using the above notation, 
\begin{align}\notag
\zeta(2k)&=\lim_{n\to\infty}\left(\frac{2^{2k}\pi^{2k}}{2^{2k}-1}\right)\frac{1}{2^{2kn}} S(2k,n)\\\notag
&=\lim_{n\to\infty}\left(\frac{2^{2k}\pi^{2k}}{2^{2k}-1}\right)\frac{1}{2^{2kn}}  \sum_{j=1}^k\frac{(-1)^{j+k}}{(2(k-j))!)}\Bigl(\lim_{t\to 0}\frac{  \mathrm{  d^{2(k-j)}  }   }{\mathrm{d}t^{2(k-j)}  }        \Bigl(\frac{t}{\sinh(t)}\Bigr)^x\vert_{x=2 k}\Bigr)\Bigl(\frac{2^{2j}-1}{\pi^{2 j}}\zeta(2 j)\Bigr)2^{2 j (n-1)}\\\notag
&=\lim_{n\to\infty}\left(\frac{2^{2k}\pi^{2k}}{2^{2k}-1}\right)\frac{1}{2^{2kn}}\sum_{j=1}^k(-1)^{j+k}\frac{2^{2j(n-2)+2k}(2^{2j}-1)}{\pi^{2j}(2(k-j))!}B_{2(k-j)}^{(2k)}(k) \zeta(2 j)\\\notag
\end{align}
The $k$-th term in  the sum is
\begin{align}\notag
&\left(\frac{2^{2k}\pi^{2k}}{2^{2k}-1}\right) \frac{1}{2^{2kn}} \frac{2^{2k(n-2)+2k}(2^{2k}-1)}{\pi^{2k}(2(k-k))!}B_{2(k-k)}^{(2k)}(k) \zeta(2 k)\\\notag
&= \zeta(2 k)\\\notag
\end{align}

If we let
$$Z(m,n)=\left(\frac{2^m\pi^m}{2^{m}-1}\right) \sum _{i=1}^{2^{n-2}}\frac{1}{ \left(2^n\sin\left(\frac{\left(2 i-1\right)\pi}{2^n}\right)\right)^m} $$\notag\\
then
$$Z(2k,n)=\zeta(2k)+R(2k,n)$$
where the formula for the error term $R(2k,n)$ is
\begin{align}\notag
R(2k,n))&=\left(\frac{2^{2k}\pi^{2k}}{2^{2k}-1}\right)\sum_{j=1}^{k-1}(-1)^{j+k}\frac{2^{2j(n-2)+2k}(2^{2j}-1)}{\pi^{2j}(2(k-j))!}B_{2(k-j)}^{(2k)}(k) \zeta(2 j)\\\notag
\end{align}
In Section 3 we saw an example of a matrix equation associated with negative odd powers of sine:

\begin{equation}\notag
\left(
\begin{array}{c}
\sin^{-3}\left(\frac{\pi}{16}\right)   \\
\sin^{-3}\left(\frac{3\pi}{16}\right)  \\
\sin^{-3}\left(\frac{5\pi}{16}\right)  \\
\sin^{-3}\left(\frac{7\pi}{16}\right)  \\
\end{array}
\right)
=8\left(
\begin{array}{cccc}
 2 & 5 & 7 & 8 \\
 7 & 2 & -8 & 5 \\
 5 & 8 & 2 & -7 \\
 -8 & 7 & -5 & 2 \\
\end{array}
\right)
\left(
\begin{array}{c}
\sin\left(\frac{\pi}{16}\right)   \\
\sin\left(\frac{3\pi}{16}\right)  \\
\sin\left(\frac{5\pi}{16}\right)  \\
\sin\left(\frac{7\pi}{16}\right)  \\
\end{array}
\right)
\end{equation}

We can also produce analogous equations for negative even powers:
\begin{equation}\notag
\left(
\begin{array}{c}
\sin^{-4}\left(\frac{\pi}{16}\right)   \\
\sin^{-4}\left(\frac{3\pi}{16}\right)  \\
\sin^{-4}\left(\frac{5\pi}{16}\right)  \\
\sin^{-4}\left(\frac{7\pi}{16}\right)  \\
\end{array}
\right)
=16\left(
\begin{array}{cccc}
 8 & 15 & 20 & 11 \\
 20 & -15 & -8 & 11 \\
 -20& -15 & 8 & 11 \\
 -8 & 15 & -20 & 11 \\
\end{array}
\right)
\left(
\begin{array}{c}
\sin\left(\frac{\pi}{8}\right)   \\
\sin\left(\frac{\pi}{4}\right)  \\
\sin\left(\frac{3\pi}{8}\right)  \\
\sin\left(\frac{\pi}{2}\right)  \\
\end{array}
\right)
\end{equation}
The matricies for negative even powers do not have the nice commutative property that the negative odd power matricies have. However, when you sum the values in one of the first three columns, they total to zero. This implies that the sum of those negative powers on the left of the equals sign corresponds to the sum of the values in the last column (multiplied by 16), which is just 4 times the element in the first row. This is true in general for these negative even power matricies. 
\par
\begin{lemma}

Assume $m\in\mathbb{N}$, $m>0$. Regarding the $2^{n-2}x2^{n-2}$ matrices, $M_n$ which satisfy $$\frac{1}{(\sin(\frac{(2j-1)\pi}{2^n}))^{2m}}=2^{2m} \sum_{k=1}^{2^{n-2}}M_n(j,k)\sin(\frac{k\pi}{2^{n-1}})$$
The elements in columns $1,...,2^{n-2}-1$ sum to zero.
\end{lemma}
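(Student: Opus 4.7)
Sum the defining identity $\displaystyle\frac{1}{\sin^{2m}\!\bigl(\tfrac{(2j-1)\pi}{2^n}\bigr)}=2^{2m}\sum_{k=1}^{2^{n-2}}M_n(j,k)\sin\!\bigl(\tfrac{k\pi}{2^{n-1}}\bigr)$ over $j=1,\ldots,2^{n-2}$. The left-hand side becomes $S(2m,n)$, which by Theorem~\ref{theorem1} is a specific rational number. The right-hand side becomes $2^{2m}\sum_{k=1}^{2^{n-2}}c_k\sin(k\pi/2^{n-1})$ with $c_k=\sum_{j=1}^{2^{n-2}}M_n(j,k)$. Since $\sin(2^{n-2}\pi/2^{n-1})=\sin(\pi/2)=1$, the lemma is equivalent to showing $c_k=0$ for every $k<2^{n-2}$, in which case $S(2m,n)=2^{2m}c_{2^{n-2}}$ collapses to the last-column contribution.

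The cleanest route exploits sign symmetries of $M_n$ that are already visible in the worked $n=4$, $m=2$ example preceding the lemma. In that matrix each column $k<2^{n-2}$ has its four entries partitioned into two sign-reversed pairs. I would prove this in general by first establishing $M_n(2^{n-2}+1-j,k)=(-1)^k M_n(j,k)$, which pairs rows $j\leftrightarrow 2^{n-2}+1-j$ and immediately kills $c_k$ for every odd $k$. For even $k<2^{n-2}$ a secondary halving symmetry $M_n(j+2^{n-3},k)=-M_n(j,k)$ pairs rows at step $2^{n-3}$ and finishes the cancellation. Both symmetries would be set up by an inductive construction of $M_n$ in $m$, parallel to Proposition~\ref{Mmatrix} in the odd-power case: the base case $m=0$ has $M_n(j,k)=\delta_{k,2^{n-2}}$, trivially satisfying all the required symmetries, and the inductive step rests on a multiplication identity expanding $\sin(k\pi/2^{n-1})/\sin^2(\pi/2^n)$ in the basis $\{\sin(\ell\pi/2^{n-1})\}_\ell$, analogous to Lemma~\ref{sinequotient}.

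The main obstacle is the combinatorial verification that this inductive step actually preserves both sign symmetries; the argument mirrors the bookkeeping of Lemma~\ref{sinequotient} but has to track an extra parity index because the target basis is indexed by all $k$ rather than only odd ones. A cleaner shortcut, if one grants that the $c_k$ are integers (which they are, by construction of $M_n$ via integer trig identities), is to invoke $\mathbb{Q}$-linear independence of the set $\{\sin(k\pi/2^{n-1}):1\le k<2^{n-2}\}\cup\{1\}$ inside the real cyclotomic subfield $\mathbb{Q}(\zeta_{2^n})^+$, together with the rationality of $S(2m,n)$ from Theorem~\ref{theorem1}, to conclude $c_k=0$ for $k<2^{n-2}$ with no explicit symmetry analysis at all.
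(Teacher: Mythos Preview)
Your proposal takes a genuinely different route from the paper. The paper's argument is a reduction-to-$m=1$ trick: it observes that the permutation-and-sign pattern relating row $j$ of $M_n$ to row $1$ is determined by the Galois automorphism of $\mathbb{Q}(\zeta_{2^n})^+$ sending $\sin\frac{\pi}{2^n}$ to $\sin\frac{(2j-1)\pi}{2^n}$, and is therefore \emph{independent of $m$}. Hence the column-sum-zero property holds for all $m$ as soon as it holds for the single case $2m=2$, which the paper then asserts by inspection. (The paper's proof is itself quite sketchy here, and in fact ends by conceding that the second, sum-based part of its argument ``doesn't quite tell us about individual columns''.) Your route~(a), by contrast, proposes explicit row-pairing identities $M_n(2^{n-2}+1-j,k)=(-1)^kM_n(j,k)$ and $M_n(j+2^{n-3},k)=-M_n(j,k)$ and an induction on $m$ to propagate them; this is more work but more self-contained than the paper's Galois appeal.

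There is, however, a real circularity in your shortcut~(b). You invoke Theorem~\ref{theorem1} to supply the rationality of $S(2m,n)$, but in the paper's logical structure Theorem~\ref{theorem1} is proved via Proposition~\ref{lastprop} and the paragraph preceding it, and that paragraph uses the present lemma: the sentence ``We know that $\sum_{i}\sin^{-(r+1)}=2^{n-2}\bar M_n(1,2^{n-2})$'' \emph{is} the statement that the first $2^{n-2}-1$ column sums vanish. So~(b) as written begs the question. The fix is easy: rationality of $S(2m,n)$ follows independently from the fact that the numbers $\csc^2\!\frac{(2j-1)\pi}{2^n}$ are the roots of an integer polynomial (a Chebyshev-type relation), so the power sums $S(2m,n)$ are rational by Newton's identities. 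With that replacement, your linear-independence argument goes through---provided you also justify the $\mathbb{Q}$-linear independence of $\{1\}\cup\{\sin\frac{k\pi}{2^{n-1}}:1\le k<2^{n-2}\}$, which is true (it is a $\mathbb{Q}$-basis of the degree-$2^{n-2}$ field $\mathbb{Q}(\zeta_{2^n})^+$) but not entirely trivial.
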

\begin{proof}
For a given power $2m$, all the matricies for a fixed value of $n$, rows $2$ through $2^{n-2}$ contain the same elements as the first row, but have order permuted and the sign may change. But the permutastions and sign are the same for all $m$ at that fixed $n$. This is because the isomorphism that sends $\sin(\frac{(2j-1)\pi}{2^n})$ to $\sin(\frac{\pi}{2^n})$ determines the permutation (and signs) of the first row into the $j$-th row. Writing out the matrix for the case $2m=2$, we see that every entry $M(i,k)$ in column $k$,$1\le k\le 2^{2n-2}-1$, is accompanied by an entry $M(i,j)$ where $M(i,k)=-M(i,j)$ and thus the sum is zero. So, we may conclude this is the case for all $m>0$. To prove this for all $m$ and $n$, we only need to prove it for all $n$ with $m=2$. We know that
$$\sum_{i=1}^{2^{n-2}}\left(\frac{1}{  \sin\left(\frac{\left(2 i-1\right)\pi}{2^n}\right)   }\right)^2=2^{2n-3}$$ (see ref. [1]). We will see below  that the entry $M(1,2^{n-2})$ equals the entry $\bar M(1,1)$ where $\bar{M}$ is the matrix for (negative) power $2m+1$, $n$. Referring to Theorem \ref{last}, substituting $m=1$ and evaluating we get the desired value. Note the sum $S(2,n)=2^{n-2}M(1,2^{n-2})$ since we sum all rows of $M$. This tells us the contribution of the other columns is zero. It doesn't quite tell us about individual columns.
\end{proof}
We can pass from the first row of a matrix corresponding to $\frac{1}{\sin\left(\frac{\pi}{2^{n}}\right)^{2m+1} }$ to the first row of the matrix corresponding to $\frac{1}{\sin\left(\frac{\pi}{2^{n}}\right)^{2m+2} }$ by multiplying by $\frac{1}{\sin\left(\frac{\pi}{2^{n}}\right) }$. The following lemma tells us what the result is.

\begin{lemma}\label{oddtoeven}
\begin{align}
\frac{1}{\sin\left(\frac{\pi}{2^{n}}\right) }\sum_{i=1}^{2^{n-2}}c_i\sin\left(\frac{(2i-1)\pi}{2^{n}}\right)&=\sum_{j=1}^{2^{n-2}-1}\biggl(\sum_{i=2^{n-2}-j+1}^{2^{n-2}}2c_i \biggr)\sin\left(\frac{j\pi}{2^{n-1}}\right)+\sum_{i=1}^{2^{n-2}}c_i\\\notag
\end{align}
\end{lemma}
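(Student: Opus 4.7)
The plan is to reduce the lemma to the standard Dirichlet-kernel-type identity
\[
\frac{\sin((2i-1)\theta)}{\sin\theta}=1+2\sum_{k=1}^{i-1}\cos(2k\theta),
\]
which I would establish in one line by telescoping the product-to-sum relation $2\sin\theta\cos(2k\theta)=\sin((2k+1)\theta)-\sin((2k-1)\theta)$ over $k=1,\ldots,i-1$. Setting $\theta=\pi/2^{n}$, so that $2k\theta=k\pi/2^{n-1}$, specializes this to
\[
\frac{\sin((2i-1)\pi/2^{n})}{\sin(\pi/2^{n})}=1+2\sum_{k=1}^{i-1}\cos\!\left(\frac{k\pi}{2^{n-1}}\right).
\]

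Since the right-hand side of the lemma is written purely in sines, the next step is to convert each cosine via the complementary-angle identity $\cos(k\pi/2^{n-1})=\sin((2^{n-2}-k)\pi/2^{n-1})$, which is valid because $\pi/2=2^{n-2}\pi/2^{n-1}$ and $k<2^{n-2}$ throughout the sum. Substituting $j=2^{n-2}-k$, so that $k=1,\ldots,i-1$ corresponds to $j=2^{n-2}-i+1,\ldots,2^{n-2}-1$, this becomes
\[
\frac{\sin((2i-1)\pi/2^{n})}{\sin(\pi/2^{n})}=1+2\sum_{j=2^{n-2}-i+1}^{2^{n-2}-1}\sin\!\left(\frac{j\pi}{2^{n-1}}\right).
\]

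Finally I multiply both sides by $c_i$, sum over $i=1,\ldots,2^{n-2}$, and interchange the order of summation. The ``$1+$'' contribution from each $i$ assembles to the constant $\sum_{i=1}^{2^{n-2}}c_i$, while for each fixed $j\in\{1,\ldots,2^{n-2}-1\}$ the set of $i$ contributing is precisely $\{2^{n-2}-j+1,\ldots,2^{n-2}\}$, yielding the coefficient $\sum_{i=2^{n-2}-j+1}^{2^{n-2}}2c_i$ on $\sin(j\pi/2^{n-1})$. This is exactly the claimed formula. There is no real analytic obstacle; the only step requiring care is the index bookkeeping in the interchange—in particular confirming that $j$ ranges over exactly $\{1,\ldots,2^{n-2}-1\}$ (the boundary value $j=2^{n-2}$ would correspond to $k=0$, which is absent from the cosine sum, so it correctly does not appear).
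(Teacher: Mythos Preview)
Your proof is correct, and it takes a genuinely different---and cleaner---route than the paper's.

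The paper begins from the sine-sum identity
\[
\frac{\sin((2i-1)\pi/2^{n})}{\sin(\pi/2^{n})}=\sum_{k=1}^{2^{n-2}}\sin\Bigl(\tfrac{2(i-k)\pi}{2^{n}}\Bigr)+\sum_{k=1}^{2^{n-2}}\sin\Bigl(\tfrac{2(i+k-1)\pi}{2^{n}}\Bigr),
\]
obtained by the same ``multiply through by $\prod\cos(\pi/2^{n-k})$'' device used earlier in Lemma~\ref{sinequotient}. It then sums over $i$ and carries out a fairly long chain of reindexings: shifting the inner index, splitting the $j$-range into the three blocks $[1-2^{n-2},0]$, $[1,2^{n-2}-1]$, $[2^{n-2},2^{n-1}]$, and folding the outer blocks back using the reflection symmetries of $\sin(j\pi/2^{n-1})$ before arriving at the stated form.

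Your approach bypasses all of this by going straight to the Dirichlet-kernel identity $\sin((2i-1)\theta)/\sin\theta=1+2\sum_{k=1}^{i-1}\cos(2k\theta)$, converting each cosine to a sine via the complementary-angle relation $\cos(k\pi/2^{n-1})=\sin((2^{n-2}-k)\pi/2^{n-1})$, and then swapping the order of summation once. The gain is that the $j$-range $\{1,\ldots,2^{n-2}-1\}$ falls out immediately, with no need to track sign changes or to merge three separate index blocks. The paper's route has the modest advantage of reusing its established machinery, but yours is shorter and more transparent.
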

\begin{proof}
This procedes just as in the proof of Lemma \ref{sinequotient}.
\begin{align}
\frac{1}{\sin\left(\frac{\pi}{2^n}\right) }\sin\left(\frac{(2i-1)\pi}{2^n}\right)&=\sum_{k=1}^{2^{n-2}}\sin\left(\frac{2(i-k)\pi}{2^n}\right)+\sum_{k=1}^{2^{n-2}}\sin\left(\frac{2(i+k-1)\pi}{2^n}\right)\\\notag
\end{align}
So,
\begin{align}
\sum_{i=1}^{2^{n-2}}c_i\frac{1}{\sin\left(\frac{\pi}{2^n}\right) }&\sin\left(\frac{(2i-1)\pi}{2^n}\right)=\sum_{i=1}^{2^{n-2}}c_i\left(\sum_{k=1}^{2^{n-2}}(\sin\left(\frac{2(i-k)\pi}{2^n}\right)+\sum_{k=0}^{2^{n-2}-1}\sin\left(\frac{2(i+k)\pi}{2^n}\right)\right)\\\notag
&=\sum_{i=1}^{2^{n-2}}c_i\left(\sum_{j=i-2^{n-2}}^{i-1}(\sin\left(\frac{2j\pi}{2^n}\right)+\sum_{j=i}^{i+2^{n-2}-1}\sin\left(\frac{2j\pi}{2^n}\right)\right)\\\notag
&=\sum_{j=1-2^{n-2}}^{2^{n-2}-1}  \sum_{i=\max(1,j+1)}^{\min(2^{n-2},2^{n-2}+j)}c_i\sin\left(\frac{2j\pi}{2^n}\right)+\sum_{j=1}^{2^{n-1}-1}\sum_{i=\max(1,j-2^{n-2}+1)}^{\min(2^{n-2}-1,j)}c_i\sin\left(\frac{2j\pi}{2^n}\right)\\\notag
&=\sum_{j=1-2^{n-2}}^{0}  \sum_{i=\max(1,j+1)}^{\min(2^{n-2},2^{n-2}+j)}c_i\sin\left(\frac{2j\pi}{2^n}\right)\\\notag
&+\sum_{j=1}^{2^{n-2}-1}\left(\sum_{i=\max(1,j+1)}^{\min(2^{n-2},2^{n-2}+j)}c_i+\sum_{i=\max(1,j-2^{n-2}+1)}^{\min(2^{n-2}-1,j)}c_i\right)\sin\left(\frac{2j\pi}{2^n}\right)\\\notag
&+\sum_{j=2^{n-2}}^{2^{n-1}}\sum_{i=\max(1,j-2^{n-2}+1)}^{\min(2^{n-2},j)}c_i\sin\left(\frac{2j\pi}{2^n}\right)\\\notag
&=\sum_{j=1-2^{n-2}}^{0}  \sum_{i=1}^{2^{n-2}+j}c_i\sin\left(\frac{2j\pi}{2^n}\right)+\sum_{j=1}^{2^{n-2}-1}\left(\sum_{i=j+1}^{2^{n-2}}c_i+\sum_{i=1}^{j}c_i\right)\sin\left(\frac{2j\pi}{2^n}\right)\\\notag
&+\sum_{j=2^{n-2}}^{2^{n-1}}\sum_{i=\max(1,j-2^{n-2}+1)}^{2^{n-2}}c_i\sin\left(\frac{2j\pi}{2^n}\right)\\\notag
&=\sum_{j=1}^{2^{n-2}-1}  \sum_{i=1}^{2^{n-2}-j}-c_i\sin\left(\frac{j\pi}{2^{n-1}}\right)+\sum_{j=1}^{2^{n-2}-1}\sum_{i=1}^{2^{n-2}}c_i\sin\left(\frac{j\pi}{2^{n-1}}\right)\\\notag
&+\sum_{j=1}^{2^{n-2}}\sum_{i=\max(1,j-2^{n-2}+1)}^{2^{n-2}}c_i\sin\left(\frac{j\pi}{2^{n-1}}\right)\\\notag
&=\sum_{j=1}^{2^{n-2}-1}\biggl(\sum_{i=2^{n-2}-j+1}^{2^{n-2}}2c_i \biggr)\sin\left(\frac{j\pi}{2^{n-1}}\right)+\sum_{i=1}^{2^{n-2}}c_i\\\notag
\end{align}
\end{proof}
If, as in Prop. \ref{negodd}, we assume $r\in\mathbb{Z}$, $r$ odd, and consider the $2^{n-2}x2^{n-2}$ matrices, $M_n$ which satisfy $$\frac{1}{(\sin(\frac{(2j-1)\pi}{2^n}))^r}=2^r \sum_{k=1}^{2^{n-2}}M_n(j,k)\sin(\frac{(2k-1)\pi}{2^n})$$ Then setting $c_i=2^rM_n(1,i)$, Lemma \ref{oddtoeven} gives us the matrix $\bar{M}_n$ for power $r+1$. We know that $\sum_{i=1}^{2^{n-2}}\frac{1}{\sin\left(\frac{(2i-1)\pi}{2^n}\right)^{r+1}}=2^{n-2}\bar{M}_n(1,2^{n-2})$. So, we now have an equivalent formulation of Theorem \ref{theorem1}:
\begin{proposition}
Let $B(j)$ be the $j$-th Bernoulli number, $B_n^{(\alpha)}(x)$ be the generalized Bernoulli polynomial of degree $n$ in $x$, $E_j(x)$ be the j-th Euler polynomial.
\begin{align}\notag
\frac{2^{2m+1}}{(2m+2)!}&\sum_{j=1}^{m+1}2^{2jn}\frac{2^{2j}-1}{2^{2j}}\binom{2m+2}{2j}B_{2(m+1-j)}^{(2m+2)}(m+1) B(2 j)\\\notag
=&2^{n-1}\frac{2^{mn}}{(2m)!}\sum_{j=1}^{2^{n-2}}\sum_{k=0}^{m-1}\Biggl(\frac{1}{2^{(n-1)(2k+1)}}\binom{2m-1}{2k}\\\notag
&\Biggl(B_{2k}^{(2m)}(m)\Biggr)\Biggl(2^{n-1}E_{2m-2k}\left(\frac{j}{2^{n-1}}\right)-mE_{2m-2k-1}\left(\frac{j}{2^{n-1}}\right)\Biggr)\Biggr)\\\notag
\end{align}
\end{proposition}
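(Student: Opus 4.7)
The plan is to recognize both sides of the claimed identity as two independent closed-form expressions for the same quantity $S(2m+2,n)$, so the proposition reduces to stringing together results already proved in the paper rather than establishing anything genuinely new.

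First I would dispose of the left-hand side. Specializing the ``Equivalently'' form of Theorem \ref{theorem1} to $k=m+1$ gives
$$S(2m+2,n)=(-1)^{m+2}\,\frac{2^{2m+1}}{(2m+2)!}\sum_{j=1}^{m+1}2^{2jn}\frac{2^{2j}-1}{2^{2j}}\binom{2m+2}{2j}B_{2(m+1-j)}^{(2m+2)}(m+1)\,B(2j),$$
so the left-hand side of the proposition is $(-1)^m S(2m+2,n)$ (the missing sign is a bookkeeping issue, not a content issue). This step is a one-line citation of Theorem \ref{theorem1}.

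For the right-hand side I would use the matrix argument developed in the paragraphs just before the proposition. Writing $c_i=2^{2m+1}M_n(1,i)$, where $M_n$ is the odd-power matrix of Proposition \ref{Mmatrix}, Lemma \ref{oddtoeven} gives the first row of the $(2m+2)$-power matrix $\bar M_n$; in particular the boundary column, where $\sin(2^{n-2}\pi/2^{n-1})=1$, satisfies $\bar M_n(1,2^{n-2})=\tfrac12\sum_{i=1}^{2^{n-2}}M_n(1,i)$. Combined with the already-established identity $S(2m+2,n)=2^{n-2}\bar M_n(1,2^{n-2})$ (from the remark that columns $1,\dots,2^{n-2}-1$ of $\bar M_n$ sum to zero), this yields $S(2m+2,n)=2^{n-3}\sum_{i=1}^{2^{n-2}}M_n(1,i)$. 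Substituting the closed form for $M_n(1,i)$ from Proposition \ref{Mmatrix}, interchanging the order of summation over $i$ and $k$, and collecting the overall constant produces precisely the right-hand side of the proposition, again up to the sign $(-1)^m$. Equating the two expressions for $S(2m+2,n)$ then gives the identity.

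The main obstacle is purely calculational: tracking the nested powers of $2$ — the $2^{n-3}$ from the matrix reduction, the $2^{2m(n-1)}$ prefactor coming from Proposition \ref{Mmatrix}, the $2^{-(n-1)(2k+1)}$ inside the sum, and the boundary factor $2^{-1}$ produced by $\sin(\pi/2)=1$ in Lemma \ref{oddtoeven} — together with the sign $(-1)^m$, so that both sides land with matching coefficients. No further identity is required beyond Theorem \ref{theorem1}, Proposition \ref{Mmatrix}, and Lemma \ref{oddtoeven}; the proposition is essentially the compatibility statement between the direct even-power formula and the one obtained by promoting the odd-power matrix.
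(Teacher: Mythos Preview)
Your proposal is correct and matches the paper's approach. The paper frames the proposition explicitly as ``an equivalent formulation of Theorem~\ref{theorem1}'' obtained by feeding the odd-power matrix of Proposition~\ref{Mmatrix} through Lemma~\ref{oddtoeven} and using the column-sum-zero property; its one-line proof then points to Proposition~\ref{lastprop} (equivalently Lemma~\ref{oddeven}), which is precisely what underlies Theorem~\ref{theorem1}, so your citation of Theorem~\ref{theorem1} in place of Proposition~\ref{lastprop} is the same route one step removed.
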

\begin{proof}
The proof of this identity will follow from the following, simpler identity, in Proposition \ref{lastprop}.
\end{proof}

If  $$\frac{1}{(\sin(\frac{\pi}{2^n}))^r}=2^r \sum_{k=1}^{2^{n-2}}M_n(1,k)\sin(\frac{(2k-1)\pi}{2^n})$$ then 
\begin{align}
\frac{1}{(\sin(\frac{\pi}{2^n}))^{r-1}}&=2^r \sum_{k=1}^{2^{n-2}}M_n(1,k)\sin(\frac{(2k-1)\pi}{2^n})\sin(\frac{\pi}{2^n})\\\notag
&=2^r \sum_{k=1}^{2^{n-2}}M_n(1,k)\frac{1}{2}\left(\cos(\frac{(2k-2)\pi}{2^n})-\cos(\frac{(2k)\pi}{2^n})\right)\\\notag
\end{align}
We are interested in the coefficient of $\cos(0)$, which is clearly $2^{r-1} M_n(1,1)$. This translates into the following equality.
\begin{proposition}\label{lastprop}
Let $B(j)$ be the $j$-th Bernoulli number, $B_n^{(\alpha)}(x)$ be the generalized Bernoulli polynomial of degree $n$ in $x$, $E_j(x)$ be the j-th Euler polynomial.
\begin{align}\notag
-\frac{2^{2m-1}}{2^{n-2}(2m)!}&\sum_{j=1}^{m}2^{2jn}\frac{2^{2j}-1}{2^{2j}}\binom{2m}{2j}B_{2(m-j)}^{(2m)}(m) B(2 j)\\\notag
=&\frac{2^{mn}}{(2m)!}\sum_{k=0}^{m-1}\Biggl(\frac{1}{2^{(n-1)(2k+1)}}\binom{2m-1}{2k}\\\notag
&\Biggl(B_{2k}^{(2m)}(m)\Biggr)\Biggl(2^{n-1}E_{2m-2k}\left(\frac{1}{2^{n-1}}\right)-mE_{2m-2k-1}\left(\frac{1}{2^{n-1}}\right)\Biggr)\Biggr)\\\notag
\end{align}
\end{proposition}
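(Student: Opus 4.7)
The plan is to reduce Prop.~\ref{lastprop} to the bridging identity
$$S(2m,n)=2^{n-2+2m}\,M_n(1,1),$$
where $M_n$ denotes the odd-power matrix of Prop.~\ref{Mmatrix} for $\sin^{2m+1}$. Applying Prop.~\ref{Mmatrix} at $j=1$ rewrites the right-hand side of Prop.~\ref{lastprop} as $(-1)^m\,2^{2m}\,M_n(1,1)$ (reading the stated $2^{mn}$ as $2^{2mn}$, as a check at $m=1$ confirms), while Theorem~\ref{theorem1} rewrites the left-hand side as $(-1)^m\,S(2m,n)/2^{n-2}$; cancelling the common sign and power of $2$ reduces the proposition to the displayed identity.

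To prove the bridging identity I follow the preamble. Write Prop.~\ref{Mmatrix}'s equation for each row $j$, multiply the $j$-th equation by $\sin((2j-1)\pi/2^n)$, and apply $2\sin\alpha\sin\beta=\cos(\alpha-\beta)-\cos(\alpha+\beta)$. Summing over $j$ gives
$$S(2m,n)=2^{2m+1}\,s^{\top}M_n\,s,\qquad s_j=\sin((2j-1)\pi/2^n).$$
The diagonal of $M_n$ is constant: $M_n(j,j)=M_n(1,1)$ for every $j$, because Prop.~\ref{negodd} identifies row $j$ as the image of row $1$ under the Galois automorphism $\sin(\pi/2^n)\mapsto\sin((2j-1)\pi/2^n)$, which sends the $(1,1)$-coordinate to the $(j,j)$-coordinate with sign $+1$. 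Hence the diagonal contribution to $s^{\top}M_n\,s$ is $M_n(1,1)\sum_j s_j^2=M_n(1,1)\cdot 2^{n-3}$, using the standard identity $\sum_j\sin^2((2j-1)\pi/2^n)=2^{n-3}$ (via $\sin^2=\tfrac12(1-\cos)$ together with the vanishing sum $\sum_j\cos((4j-2)\pi/2^n)=0$).

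The crux is to show that the off-diagonal part of $s^{\top}M_n\,s$ vanishes, i.e.\ $s^{\top}(M_n-M_n(1,1)I)\,s=0$. I plan to handle this via the permutation-with-sign structure from Prop.~\ref{negodd}: the off-diagonal entries partition into Galois orbits, and each orbit, when paired with the corresponding $s_j s_k$ products expanded via product-to-sum, contributes a sum of the form $c\sum_{j=1}^{2^{n-2}}\cos((2j-1)\ell\pi/2^{n-1})$ which vanishes whenever $\ell\not\equiv 0\pmod{2^{n-1}}$ by a geometric series. Once the bridging identity is in hand, substituting Theorem~\ref{theorem1}'s closed form on the left and Prop.~\ref{Mmatrix}'s closed form on the right yields Prop.~\ref{lastprop} directly.
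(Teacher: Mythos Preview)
Your reduction has a circularity: in this paper Theorem~\ref{theorem1} is proved \emph{from} Proposition~\ref{lastprop} (its proof reads ``See Prop.~\ref{lastprop} and the comments preceding the proposition''), so invoking Theorem~\ref{theorem1} to identify the left-hand side of Proposition~\ref{lastprop} with $(-1)^m S(2m,n)/2^{n-2}$ assumes exactly what you are trying to prove. The bridging identity $S(2m,n)=2^{n-2+2m}M_n(1,1)$ together with Prop.~\ref{Mmatrix} does yield a closed form for $S(2m,n)$, but it is the Euler-polynomial expression (the right-hand side of Prop.~\ref{lastprop}); the content of Prop.~\ref{lastprop} is precisely that this coincides with the Bernoulli-number expression on the left, and for that step you still need an argument independent of Theorem~\ref{theorem1}.

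The paper's actual proof is entirely different and much shorter: Proposition~\ref{lastprop} is simply Lemma~\ref{oddeven} specialised at $x=1/2^{n-1}$. Lemma~\ref{oddeven} is a Laurent-polynomial identity in the free variable $x$, established earlier by generating-function manipulations (via Lemma~\ref{prepprep} and then Lemma~\ref{reduced}), with no reference to the sine sums at all. As a secondary point, your off-diagonal vanishing for $s^{\top}M_n s$ is only sketched; the claim that each Galois orbit produces a single sum of the form $c\sum_j\cos((2j-1)\ell\pi/2^{n-1})$ needs justification, since the signed-permutation structure of Prop.~\ref{negodd} does not by itself force the $s_js_k$ products in an orbit to line up into one arithmetic progression of angles. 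The paper sidesteps this by a simpler route to the bridging relation: multiply the first row of the $(2m{+}1)$-matrix equation by $\sin(\pi/2^n)$, read off the coefficient of $\cos 0$, and combine with the column-sum observation for the even-power matrix.
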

\begin{proof}
This is Lemma \ref{oddeven}.
\end{proof}
\begin{section}{{\textbf{Bibliography}}}
\frenchspacing
\begin{itemize}
\item{[1]} L. Fairbanks, ``Notes On An Approach to Apery's Constant',  arXiv:2206.11256 [math.NT]
\item{[2]} L. Fairbanks, ``Powers of Cosine and Sine, arXiv:2308.04437  [math.NT]
\item{[3]} S. Gaboury, R. Tremblay, B. J. Fugere, ``Some Explicit Formulasd For Certain New Classes Of Bernoulli, Euler And Genocchi Polynomials", Procedings of the Jangjeon Mathematical mSociety, Jan. 2014
\item{[4]} Goldberg, K.; Newman, M; Haynsworth, E. (1972), "Stirling Numbers of the First Kind, Stirling Numbers of the Second Kind", in Abramowitz, Milton; Stegun, Irene A. (eds.), Handbook of Mathematical Functions with Formulas, Graphs, and Mathematical Tables, 10th printing, New York: Dover, pp. 824–825
\item{[5]} H.M. Srivastava, A. Pinter.,"Remarks on Some  Relationships Between the Bernoulli and Euler Polynomials", Applied Mathematics Letters 17 (2004) 375-380
\end{itemize}
\end{section}
\end{flushleft}
\end{document}